\numberwithin{equation}{section} 
\newtheorem{thm}{Theorem}[section]
\newtheorem{corollary}[thm]{Corollary}
\newtheorem{lem}[thm]{Lemma}
\theoremstyle{definition}
\newtheorem{df}[thm]{Definition}
\newtheorem{remark}[thm]{Remark}
\newcommand{\ep}{\varepsilon}
\newcommand{\eps}{\varepsilon}
\newcommand{\pa}{\partial}
\newcommand{\RN}{\mathbb{R}^N}
\newcommand{\nep}{n_{\ep}}
\newcommand{\nept}{n_{\ep t}}
\newcommand{\cep}{c_{\ep}}
\newcommand{\cept}{c_{\ep t}}
\newcommand{\uep}{u_{\ep}}
\newcommand{\uept}{u_{\ep t}}
\newcommand{\Rone}{\mathbb{R}}
\newcommand{\R}{\mathbb{R}}
\newcommand{\f}[2]{\frac{#1}{#2}}
\newcommand{\na}{\nabla}
\newcommand{\norm}[2][]{\left\|#2\right\|_{#1}}
\newcommand{\Lom}[1]{L^{#1}(\Omega)}
\newcommand{\Ombar}{\overline{\Omega}}
\newcommand{\Om}{\Omega}
\newcommand{\tmax}{T_{{\rm max}, \ep}}
\newcommand{\lp}[2]{\|#2\|_{L^{#1}(\Omega)}}
\newcommand{\biglp}[2]{\big\|#2\big\|_{L^{#1}(\Omega)}}
\newcommand{\ol}{\overline}
\newcommand{\HP}{\mathcal{P}}
\newcommand{\cd}{(\cdot,t)}
\newcommand{\io}{\int_{\Omega}}
\newcommand{\nn}{\nonumber}
\newcommand{\iio}{\int_0^T \int_\Omega}
\newcommand{\iiio}{\int_0^\infty \int_\Omega}
\newcommand{\nepp}{\nep^p}
\newcommand{\cepq}{\cep^q}
\begin{document}
\begin{center}
    \LARGE{{\bf 
 A Keller--Segel--fluid system with singular sensitivity: Generalized solutions}}
\end{center}
\vspace{5pt}
\begin{center}
    Tobias Black\\
    \vspace{2pt}
    Universit\"at Paderborn, 
    Institut f\"ur Mathematik,\\ 
    Warburger Str.\ 100, 33098 Paderborn, Germany\\
    {\tt tblack@math.uni-paderborn.de}\\
    \vspace{12pt} 
    Johannes Lankeit\\
    \vspace{2pt}
    Universit\"at Paderborn, 
    Institut f\"ur Mathematik,\\ 
    Warburger Str.\ 100, 33098 Paderborn, Germany\\
    {\tt jlankeit@math.uni-paderborn.de}\\
    \vspace{12pt}
    Masaaki Mizukami\\
    \vspace{2pt}
    Department of Mathematics, 
    Tokyo University of Science\\
    1-3, Kagurazaka, Shinjuku-ku, Tokyo 162-8601, Japan\\
    {\tt masaaki.mizukami.math@gmail.com}\\
    \vspace{2pt}
\end{center}
\begin{center}    
    \small \today
\end{center}

\vspace{2pt}
\newenvironment{summary}
{\vspace{.5\baselineskip}\begin{list}{}{%
     \setlength{\baselineskip}{0.85\baselineskip}
     \setlength{\topsep}{0pt}
     \setlength{\leftmargin}{12mm}
     \setlength{\rightmargin}{12mm}
     \setlength{\listparindent}{0mm}
     \setlength{\itemindent}{\listparindent}
     \setlength{\parsep}{0pt}
     \item\relax}}{\end{list}\vspace{.5\baselineskip}}
\begin{summary}
{{\bf Abstract.}
In bounded smooth domains $\Omega\subset\mathbb{R}^N$, $N\in\{2,3\}$, we consider the Keller--Segel--Stokes system 
\begin{align*}
 n_t + u\cdot \nabla n &= \Delta n - \chi \nabla \cdot(\frac{n}{c}\nabla c),\\
 c_t + u\cdot \nabla c &= \Delta c - c + n,\\
 u_t &= \Delta u + \nabla P + n\nabla \phi, \qquad \nabla \cdot u=0,
\end{align*}
and prove global existence of generalized solutions if 
\[
 \chi<\begin{cases}
       \infty,&N=2,\\
       \frac{5}{3},&N=3.
      \end{cases}
\]
These solutions are such that blow-up into a persistent Dirac-type singularity is excluded. 
\vspace*{1cm}\\
\textbf{MSC (2010):} 35K55 (primary); 35D99; 92C17; 35Q92; 76D07; 35A01 (secondary)\\
\textbf{Key words:} chemotaxis-fluid; singular sensitivity; global existence; Keller--Segel system; Stokes equation
}
\end{summary}
\vspace{10pt}

\newpage
%
%
\section{Introduction}
If chemotaxis takes place in a fluid environment, it seems reasonable to include interaction with the sourrounding fluid into the model; in particular, since experiments indicate that in the regime of a high number of chemotactic agents this interaction ceases to be negligible (cf.\ \cite{dombrowskietal}). 
The mathematical question that immediately arises is the query to which extent the presence of this coupling affects properties of the solution or the proofs thereof. In some sense, this can be understood as question about indirect regularity effects of a fluid flow. 

In this article we are going to consider this question in the setting of a chemotaxis system with singular sensitivity: 
 \begin{align}\label{introsystem}
        & n_t + u\cdot\nabla n 
         = \Delta n - \chi \nabla\cdot \left(\frac{n}{c}\nabla c\right),  \nn
 \\[1mm]
         &c_t + u\cdot\nabla c 
         = \Delta c - c + n,  
 \\[1mm]
        &u_t 
        = \Delta u + \nabla P 
        + n \nabla\phi, 
        \quad \nabla\cdot u = 0,\nn
\end{align}
where $n$, $c$, $u$ respectively denote the density of chemotactically active bacteria, the concentration of a signal substance and the velocity field of the fluid, whose motion is driven by density differences according to presence or absence of bacteria.

%
%

In the presence of fluid coupling, we have been able to obtain global existence of classical solutions for $\chi<\sqrt{\f2N}$ in \cite{blm2}. (We also refer to the introduction of said article for additional motivation and more references to works dealing with chemotaxis--fluid systems or chemotaxis systems with logarithmic sensitivity.) 

This parameter range for $\chi$ is (almost) as large as known for the fluid-free system (cf.\  \cite{biler99,nagai_senba_yoshida_ge,win_singular,fujie})  -- there it is only known to be slightly larger in $N=2$, cf.\ \cite{lankeit_m2as}. 
Beyond this range, weaker solution concepts have been explored, excluding at least the possibility of blow-up into a persistent Dirac-type singularity, \cite{win_singular,stinner_win,lankeit_winkler}, whereas  blow-up can be expected for large values of $\chi$, according to the result of \cite{nagai_senba} on the corresponding parabolic--elliptic system. 


While for small $\chi$, the proofs of global existence of classical solutions (see \cite{win_singular}) and even boundedness, \cite{fujie}, rely on an ODI for $\io n^pc^{-r}$ for some $p>1$ and suitable $r$, the decisive estimates for the construction of generalized solutions for larger $\chi$ in \cite{lankeit_winkler} are based on a similar observation concerning $\io n^pc^{-r}$ for $p$ below $1$. 

It turned out that corresponding estimates allow for a proof of a supersolution property involving the compound quantity $n^pc^{-r}$ with $p,-r\in (0,1)$, which if combined with a more common notion of weak solubility for the second equation and with the condition that the mass $\io n$ be nonincreasing (as a faint subsolution requirement) serves to yield a solution concept which is compatible with the usual concept, but can cope with much less regularity information, and has successfully been employed in systems where the existence of global solutions of any kind had been unknown (\cite{lankeit_winkler} and, in a parabolic-elliptic setting, \cite{black-pe}). Up to now, however, the treatments of this approach do not extend to any fluid-coupled systems. It is, therefore, aim of the present article to expand said technique to the fluid context.

In order to see how, in the latter setting, reliance on estimates for $\io n^pc^{-r}$ presents us with a problem, let us recall the main difficulty stemming from presence of the fluid coupling in \cite{blm2}: 

If we consider the second equation in \eqref{introsystem} as inhomogeneous heat equation $c_t=\Delta c - c +f$, due to the transport term we not only lose positivity information on the source (we knew the sign of $n$ but have no information on that of $n-u\nabla c$), important for the global boundedness proof, but, more crucially, also bounds enabling us to employ heat semigroup estimates directly: Where the usual mass conservation of the first equation readily yielded an $L^1$-bound if $u\equiv 0$ and hence $f\equiv n$, at the beginning we are lacking comparable estimates for $f\equiv n-u\nabla c$.

In \cite[Lemma 2.5]{blm2}, we mitigated this problem by replacing the use of semigroup estimates by an argument based on the differential inequality (\cite[Lemma 2.4]{blm2})
\[
 \f1q \f{d}{dt} \io c^q = -(q-1) \io c^{q-2}|\na c|^2 - \io c^q + \io nc^{q-1} 
\]
for arbitrary $q>1$, where we were able to control the source term mainly due to the bound on $\io n^pc^{-r}$ previously obtained, \cite[Lemma 2.3]{blm2}. This will no longer be possible if $p<1$. 

We work around this restriction in different ways for $N=2$ and $N=3$. In the two-dimensional setting, we firstly procure bounds for the fluid velocity field 
and then rely on the well-known smoothing estimates for the heat semigroup; however, we need more than a straightforward application and have to partially absorb the additional source term by the term to be controlled. 

Unfortunately, this reasoning fails for $N=3$ (cf.\ Remark \ref{remark:doesntworkforNequal3}). 
Here we instead employ a differential inequality for $\f{d}{dt} \io c^q$ -- for $q<1$, in contrast to \cite{blm2}. One of its consequences is a bound on the space-time integral of $|\na c^{\f q2}|^2$ (Lemma \ref{lem;L2;nablac^q/2}), which we then use to secure bounds for $\int_0^T\io c^r$ in Lemma \ref{lem;Lresti;cep;N=3} for $r\in(1,\f53)$. (This reasoning, in turn, would work for $N=2$, but entail some restrictions on $\chi$.)

If we want to control $\int_0^T\io n^{\rho}$ for some $\rho>1$ (which is crucial not only for some of the convergence results in Lemma \ref{lem;conv}, but also for obtaining the minimal regularity we desire for our solutions if they are meant to exclude blow-up into a persistent Dirac-type singularity), as in Lemma \ref{lem;Lr(Lr;n)}, the restriction $r<\f53$ will force us (cf.\ \eqref{ineq;L1-qr..;nr}) to pose a stronger condition on $\chi$ in the $3$-dimensional case than was needed in the fluid-free setting in \cite{lankeit_winkler}. 

Aside from these complications, however, it is possible to adapt the solution concept of \cite{lankeit_winkler} to the present system. We introduce generalized solutions in Section \ref{sec:solutionconcept} and then, roughly following the reasoning of  \cite{lankeit_winkler} with the changes indicated above -- and, of course, additional modifications whenever the presence of fluid terms demands them --, show the global existence of generalized solutions to \eqref{introsystem}. 

More precisely, we will assume that the initial data and parameter in 
\begin{subequations}\label{cp}
 \begin{align}\label{cp1}
        & n_t + u\cdot\nabla n 
         = \Delta n - \chi \nabla\cdot \left(\frac{n}{c}\nabla c\right)
         &&\text{in } \Omega\times(0,T),  
 \\[1mm]\label{cp2}
         &c_t + u\cdot\nabla c 
         = \Delta c - c + n 
         && \text{in } \Omega\times(0,T),  
 \\[1mm]\label{cp3}
        &u_t 
        = \Delta u + \nabla P 
        + n \nabla\Phi, 
        \quad \nabla\cdot u = 0 
        && \text{in } \Omega\times(0,T)
 \\[1mm] \label{cp4}
         & \partial_\nu n =
        \partial_\nu c = 0, \quad 
        u = 0 
        && \text{in }\partial\Omega\times(0,T),
 \\[1mm] \label{cp5}
        &    n(\cdot,0)=n_0,\ c(\cdot,0)=c_0,\ 
        u(\cdot,0)=u_0 
        &&  \text{in } \Omega, 
 \end{align}
\end{subequations} 
satisfy $\chi>0$ and  
\begin{equation}\label{condi;ini2}
 \Phi \in C^{2}(\overline{\Omega}). 
\end{equation}
as well as 
 \begin{align}\label{condi;ini1}
   &0 \le n_{0} 
   \in C^0(\overline{\Omega}), \quad n_0 \not\equiv 0,
  \\ 
   &c_0 \in W^{1,\infty}(\Omega), 
   \quad \inf_{x\in \Omega}c_0(x) > 0,\label{condi;ini3}  
  \\ 
   &u_0 \in D(A^{\alpha}), \label{condi;ini4}
 \end{align}
for some $\alpha \in \left(\frac{N}{4}, 1\right)$, where $A:=-\HP\Delta$ denotes the Stokes operator, with Helmholtz projection $\HP$ onto the subspace 
$L^2_\sigma\!\left(\Omega\right):=\big\{\varphi\in L^2\big(\Omega;\Rone^N\big)\,\vert\,\nabla\cdot\varphi=0\ \mbox{in}\ \Omega \big\}$
and homogeneous Dirichlet boundary conditions.

The main result of this article will then be given by: 

\begin{thm}\label{mainthm}
For $N\in\{2,3\}$ let $\Omega\subset \mathbb{R}^N$ 
 be a bounded domain with smooth boundary. 
 Suppose that $\Phi,n_{0},c_0,u_0$ fulfil 
 \eqref{condi;ini2}{\rm --}\eqref{condi;ini4} 
 and $\chi > 0$ satisfies
 \begin{align}\label{condi;chi;generalized}
 \chi < 
 \left\{ 
   \begin{array}{ll} \infty\quad  & \mbox{if}\ N=2, 
 \\[1.5mm] 
   \frac 53 & \mbox{if} \ N=3. 
   \end{array}
 \right. 
 \end{align}
 Then there exist at least one global generalized solution $(n,c,u)$ in the sense of 
 Definition  \ref{def;generalizedsol}. 
 In particular, this solution satisfies $n \in L^s_{loc} (\ol{\Omega}\times [0,\infty))$ 
 for some $s>1$, and moreover we have 
\[
\io n \cd = \io n_0  \quad\mbox{for a.e. } t>0. 
\]
\end{thm}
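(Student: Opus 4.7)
The plan is to construct the generalized solution as a limit of classical solutions $(\nep,\cep,\uep)$ to a regularized system in which the singular term $\chi\nabla\cdot(n\nabla c/c)$ is replaced by $\chi\nabla\cdot(\nep\na\cep/(\cep+\eps))$, and in which -- if needed -- the Stokes forcing is mollified to $\nep/(1+\eps\nep)\,\na\Phi$ so that the fluid coupling is harmless at the approximate level. For each fixed $\eps>0$, standard Schauder fixed-point and semigroup arguments (together with the strict positivity of $\cep$) yield a global classical solution; nonnegativity of $\nep$, positivity of $\cep$, and mass conservation $\io\nep\cd=\io n_0$ are preserved along the approximation.

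The heart of the proof is to extract $\eps$-uniform a priori bounds. The starting point is a uniform bound on $\io\nep^p\cep^{-r}$ for parameters $p\in(0,1)$ and $r>0$ chosen so that the chemotactic contribution in the associated ODI is absorbed by its dissipation, with the transport terms disappearing after integration by parts thanks to $\nabla\cdot\uep=0$. From this estimate the route to integrability of $\nep$ splits by dimension. In $N=2$ one first controls $\uep$ via Stokes smoothing using only the $L^1$-bound on $\nep$, then applies heat-semigroup estimates to \eqref{cp2} with a partial absorption trick to bound $\cep$ and $\na\cep$, and finally obtains $\nep\in L^\rho(\Omega\times(0,T))$ for some $\rho>1$. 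In $N=3$, one exploits instead the differential inequality for $\f{d}{dt}\io\cep^q$ with $q\in(0,1)$ (Lemma \ref{lem;L2;nablac^q/2}), which via Gagliardo--Nirenberg yields $\cep\in L^r(\Omega\times(0,T))$ for $r\in(1,\f53)$ (Lemma \ref{lem;Lresti;cep;N=3}); combined with the $\nep^p\cep^{-r}$-bound, this gives $\nep\in L^\rho(\Omega\times(0,T))$ for some $\rho>1$ via Lemma \ref{lem;Lr(Lr;n)}, precisely at the cost of the restriction $\chi<\f53$. In parallel, standard Stokes theory furnishes uniform bounds on $\uep$, $\na\uep$ and $\uept$ in suitable Bochner spaces.

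With these uniform estimates in hand, Aubin--Lions-type compactness (as collected in Lemma \ref{lem;conv}) allows one to extract a subsequence along which $\nep\to n$ a.e.\ and in $L^s_{loc}(\ol\Om\times[0,\infty))$ for some $s>1$, $\cep\to c$ strongly in suitable senses with $c\ge 0$ a.e., and $\uep\to u$ strongly enough to pass to the limit in all advective terms. The limit $(n,c,u)$ then satisfies the weak formulations of \eqref{cp2} and \eqref{cp3} directly, using the $L^\rho$-bound on $\nep$ and the strong convergences of $\cep$, $\na\cep$ and $\uep$. For \eqref{cp1}, the regularized equation is tested against $p\nep^{p-1}\cep^{-r}\varphi$ for nonnegative smooth $\varphi$, integrated by parts, and the limit is taken using weak lower semicontinuity of convex functionals to obtain the supersolution property for $n^pc^{-r}$; combined with Fatou's lemma and the approximate-level mass conservation, this yields the identity $\io n\cd=\io n_0$ asserted in Theorem \ref{mainthm}, which in turn rules out concentration into a persistent Dirac singularity.

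The main obstacle is that the singular flux $\nep\na\cep/\cep$ in \eqref{cp1} cannot be controlled strongly enough to pass to the pointwise limit in the large-$\chi$ regime, which is precisely why the $n$-equation must be captured only through the $n^pc^{-r}$ supersolution property rather than a genuine weak formulation. The second, more quantitative, difficulty is to propagate enough integrability from $\io\nep^p\cep^{-r}$ (with $p<1$) through \eqref{cp2} in the presence of the advection $\uep\cdot\na\cep$: the degradation from the essentially unrestricted $r$ available in \cite{lankeit_winkler} down to $r<\f53$ dictated by Lemma \ref{lem;Lresti;cep;N=3} is exactly what produces the condition $\chi<\f53$ in the three-dimensional case.
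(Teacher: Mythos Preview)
Your overall strategy aligns with the paper's, but two concrete choices would not work as written.

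First, the regularization: you replace $c$ by $\cep+\eps$ in the denominator, whereas the paper replaces $n$ by $\nep/(1+\eps\nep)$ in the taxis term (see \eqref{apcp1}). Since $\cep\ge(\inf c_0)e^{-t}$ already holds by comparison (Lemma \ref{lem;local existence}), your regularization is essentially inert; more importantly, global existence of the approximate classical solutions (Lemma \ref{Tmax=infty}) relies on the pointwise bound $\chi\nep/((1+\eps\nep)\cep)\le\chi e^T/(\eps\inf c_0)$ in Lemma \ref{lem;L2;n;ap}, and with your scheme the corresponding quantity $\chi\nep/(\cep+\eps)$ is not bounded independently of $\nep$, so that $L^2$-argument for $\nep$ breaks down. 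The Stokes mollification you suggest is neither needed nor used.

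Second, the exponent on $c$: you work with $\io\nep^p\cep^{-r}$ for $r>0$, but the supersolution framework (Definition \ref{def;weak(p,q)supersol}, Lemma \ref{lem;intnpc-r}) is built on $\io\nep^p\cep^{q}$ with $q\in(q_-(p),q_+(p))\subset(0,1)$ \emph{positive}; this sign is precisely what makes the key coefficient in Lemma \ref{lem;1step} positive and yields the estimates of Lemma \ref{lem;estifromIneq}. A negative exponent on $c$ corresponds to the small-$\chi$ classical-solution regime (with $p>1$) and does not deliver the required structure here. Two further omissions: strict positivity $n>0$, $c>0$ a.e.\ in $\Omega\times(0,\infty)$ and $n^pc^q>0$ a.e.\ on $\partial\Omega\times(0,\infty)$ are part of Definition \ref{def;weak(p,q)supersol} and require a separate argument (Lemmata \ref{lem;lower;logn}--\ref{lem;positivity}) that you do not address; and Fatou alone gives only $\io n\le\io n_0$---the equality asserted in Theorem \ref{mainthm} comes from strong $L^1$-convergence of $\nep$, obtained from the $L^\rho$-bound with $\rho>1$ via Vitali (Lemma \ref{lem;conv}).
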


\section{Generalized solutions}\label{sec:solutionconcept}

In this section we adapt the definition of generalized solvability from \cite{lankeit_winkler} to also incorporate fluid interaction.
As a first step let us introduce the notion of {\it global weak $(p,q)$-supersolutions} to
\eqref{cp1} and \eqref{cp4}--\eqref{cp5}. 

\begin{df}\label{def;weak(p,q)supersol}
Let $p\in (0,1)$ and $q\in (0,1)$, and suppose that 
$n,c:\Omega \times (0,\infty)\to \mathbb{R}$ and $u:\Omega\times (0,\infty)\to\mathbb{R}^N$ 
are measurable functions 
on $\Omega \times (0,\infty)$ such that $n>0$ and $c>0$ a.e. in $\Omega\times (0,\infty)$,
that 
\begin{align}\label{def;supersol;regularity1}
  n^p c^q \in L^{\frac 32+\eta}_{loc}(\ol{\Omega}\times [0,\infty)),
  \quad n^{p+1} c^{q-1} \in L^1_{loc}(\ol{\Omega}\times [0,\infty)) \quad \mbox{and} 
  \quad u\in  L^{\frac{3+2\eta}{1+2\eta}}_{loc}([0,\infty);L^{\frac{3+2\eta}{1+2\eta}}_{\sigma}(\Omega))
\end{align}
with some $\eta>0$ and that $\nabla n^{\frac p2}$ and $\nabla c^{\frac q2}$ belong to 
$L^1_{loc} (\Omega \times (0,\infty);\Rone^N)$ and are such that 
\begin{align}\label{def;supersol;regularity2}
  c^{\frac q2} \nabla n^{\frac p2}\in L^2_{loc}(\ol{\Omega}\times [0,\infty);\Rone^N) 
  \quad \mbox{and} \quad 
  n^{\frac p2} \nabla c^{\frac q2} \in L^2_{loc}(\ol{\Omega}\times [0,\infty);\Rone^N).  
\end{align}
Then $(n,c,u)$ will be called a global weak $(p,q)$-supersolution of 
\eqref{cp1} and \eqref{cp4}--\eqref{cp5} 
if 
\begin{align}\label{ineq;def;weak(p,q)}\notag
  - \iiio n^pc^q\varphi_t - \io n_0^p c_0^q \varphi(\cdot,0) 
  & \ge  \frac{4(1-p)q -4q^2 -p(1-p)^2\chi^2}{pq(p\chi +1-q)} \iiio c^q |\nabla n^{\frac p2}|^2\varphi  
\\ \notag  
  &\quad\, + \frac{4(p\chi + 1-q)}{q} \iiio \left| n^{\frac p2} \nabla c^{\frac q2}  
      -  \frac{(1-p)\chi + 2q}{2(p\chi +1-q)}c^{\frac q2} \nabla n^{\frac p2}\right|^2 \varphi 
\\  \notag
  &\quad\, -\frac{2p\chi}{q} \iiio n^{\frac p2}c^{q} \nabla n^{\frac p2}\cdot \nabla\varphi 
   + \left( 1-\frac{p\chi}{q} \right) \iiio n^p c^q \Delta \varphi 
\\ \notag
  &\quad\, -q\iiio n^p c^q \varphi + q\iiio n^{p+1}c^{q-1} \varphi  
\\
&\quad\, + \iiio n^p c^q u\cdot \nabla \varphi 
\end{align}  
holds for all nonnegative $\varphi \in C^\infty_0(\ol{\Omega}\times [0,\infty))$  
such that $\frac{\partial \varphi}{\partial \nu}=0$ on $\partial \Omega \times (0,\infty)$ and 
if moreover 
\[
  n^p c^q >0 \quad \mbox{a.e. on}\ \partial \Omega \times (0,\infty). 
\]
\end{df} 
\begin{remark} 
The regularity requirements ensure that 
actually all of the integrals appearing in \eqref{ineq;def;weak(p,q)} exist; and 
since $u^{\frac p2}c^{\frac q2}\in L^2_{loc}([0,\infty);W^{1,2}(\Omega))\hookrightarrow 
L^2_{loc}(\pa \Omega \times [0,\infty))$, also the positivity condition 
at the boundary makes sense.  
\end{remark}

We now provide a definition of {\it weak solutions} to \eqref{cp2}--\eqref{cp5}. 

\begin{df}\label{def;weaksol}
A triplet $(n,c,u)$ of functions satisfying 
\begin{align*}
  \left\{ 
   \begin{array}{l}
    n\in L^1_{loc} (\ol{\Omega}\times [0,\infty)), 
  \\[1mm]
    c\in L^{\frac 32+\eta}_{loc}(\ol{\Omega}\times [0,\infty))\cap L^1_{loc}([0,\infty);W^{1,1}(\Om)), 
  \\[1mm]
     u\in L^{\frac{3+2\eta}{1+2\eta}}_{loc}([0,\infty);L^{\frac{3+2\eta}{1+2\eta}}_{\sigma}(\Omega))
   \end{array}  
  \right.
\end{align*}
for some $\eta>0$ 
will be named a global weak solution of 
\eqref{cp2}--\eqref{cp5} 
if 
\begin{align*}
  -\iiio c\varphi_t -\io c_0\varphi(\cdot,0) = 
  - \iiio \nabla c\cdot \nabla \varphi - \iiio c\varphi + \iiio n\varphi -\iiio cu\cdot\nabla \varphi
\end{align*}
is valid for all $\varphi \in C^\infty_0(\ol{\Omega}\times(0,\infty))$, and 
\begin{align*}
  - \iiio u\cdot \psi_t - \io u_0\cdot \psi(\cdot,0) = \iiio \nabla u\cdot\nabla \psi - \iiio n \nabla\Phi \cdot \psi
\end{align*}
is satisfied for all $\psi\in C^\infty_{0,\sigma}(\ol{\Omega}\times [0,\infty)):=\{\psi\in C_0^\infty(\ol{\Omega}\times [0,\infty);\Rone^N)\mid \nabla\cdot \psi=0\}$. 
\end{df} 

Finally we introduce a definition of {\it global generalized solutions} to \eqref{cp} as follows. 

\begin{df}\label{def;generalizedsol}
A triplet of measurable functions $n,c$ and $u$ 
defined on $\Omega \times (0,\infty)$ will be said to be a global generalized solution of 
\eqref{cp} if $(n,c,u)$ is a global weak solution of 
\eqref{cp2}--\eqref{cp5} 
according to Definition \ref{def;weaksol}, if there exist $p\in (0,1)$ and $q\in (0,1)$ such that 
$(n,c,u)$ is a global weak $(p,q)$-supersolution of 
\eqref{cp1} and \eqref{cp4}--\eqref{cp5} 
in the sense of Definition \ref{def;weak(p,q)supersol}, and if moreover 
\begin{equation*}
  \io n(\cdot,t) \le \io n_0 \quad \mbox{for a.e.} \ t>0. 
\end{equation*}
\end{df}

\begin{remark}
 If $(n,c,u)\in \left(C^0(\Ombar\times[0,\infty))\cap C^{2,1}(\Ombar\times(0,\infty))\right)^{2+N}$ is a global generalized solution to \eqref{cp}, then $(n,c,u)$ solves \eqref{cp} classically. For a proof in the fluid-free setting, see \cite[Lemma 2.5]{lankeit_winkler}. In this proof it can also be seen why Definition \ref{def;weak(p,q)supersol} includes positivity requirements on $n$ and $c$, both in the domain and on the boundary.
\end{remark}

\section{Properties and global existence of classical solutions to a family of approximate problems}
In this section we investigate a family of approximate problems and derive basic solution properties, which on one hand act as starting point for further a priori bounds and on the other hand allow us to conclude that, in fact, these solutions are global-in-time.
For $\ep\in(0,1)$ we will make use of a convenient regularization of 
\eqref{cp1}--\eqref{cp5}, by considering  
\begin{subequations}
 \begin{align}\label{apcp1}
         &\nept + \uep \cdot\nabla \nep  
         = \Delta \nep - \chi \nabla\cdot\Big(\frac{\nep}{(1+\ep\nep)\cep}\nabla \cep\Big)
         && \text{in } \Omega\times(0,T), 
 \\[1mm]\label{apcp2}
         &\cept + \uep\cdot\nabla \cep 
         = \Delta \cep - \cep + \nep 
         && \text{in } \Omega\times(0,T), 
 \\[1mm]\label{apcp3}
        &\uept 
        = \Delta \uep + \nabla P_\ep 
        + \nep \nabla\Phi, 
        \quad \nabla\cdot \uep = 0 
        && \text{in } \Omega\times(0,T), 
\\[1mm]\label{apcp4}
        &   \partial_\nu \nep =
        \partial_\nu \cep = 0, \quad 
        \uep = 0 
        && \text{in }\partial\Omega\times(0,T),
 \\[1mm] \label{apcp5}
        &    \nep (\cdot,0)=n_0,\ \cep (\cdot,0)=c_0,\ 
        \uep (\cdot,0)=u_0 
        &&  \text{in } \Omega.  
 \end{align}
 \end{subequations}
%
%
%
%
We first recall a local existence result. 
We also give some lower estimate for $\cep$, 
which 
will alleviate the difficulties linked to the presence of the 
singular sensitivity function. 

\begin{lem}\label{lem;local existence}
Let $N\in\{2,3\}$, $\chi>0$,  $\alpha\in(\frac{N}{4},1)$ and $\ep\in(0,1)$ and 
let $\Omega\subset \RN$ be a 
bounded domain with smooth boundary. 
Assume that $n_0,c_0,u_0,\Phi$ satisfy 
\eqref{condi;ini2}{\rm --}\eqref{condi;ini1}. 
Then there exist $\tmax\in (0,\infty]$ and 
a classical solution $(\nep,\cep,\uep,P_\ep)$ of \eqref{apcp1}--\eqref{apcp5}  
in $\Omega\times (0,\tmax)$ such that 
\begin{align*}
  &\nep \in C^0 (\overline{\Omega}\times [0,\tmax))\cap 
  C^{2,1}(\overline{\Omega}\times (0,\tmax)), 
  \\ 
  &\cep \in C^0 (\overline{\Omega}\times [0,\tmax))\cap 
  C^{2,1}(\overline{\Omega}\times (0,\tmax))
  \cap L^\infty_{loc}([0,\tmax);W^{1,\infty}(\Omega)), 
  \\ 
  &\uep \in C^0 (\overline{\Omega}\times [0,\tmax);\R^N)\cap 
  C^{2,1}(\overline{\Omega}\times (0,\tmax);\R^N),\\
  &P_\ep \in C^{1, 0}(\overline{\Omega}\times(0, \tmax))  
\end{align*}
and 
\begin{align*}
  \tmax=\infty \quad \mbox{or}\quad 
  \lim_{t\to \tmax}
  \left(
  \lp{\infty}{\nep (\cdot,t)}
  + \|\cep (\cdot,t)\|_{W^{1,\infty}(\Omega)}
  +\lp{2}{A^\alpha \uep (\cdot,t)}
  \right)=\infty. 
\end{align*}
The solution is unique, 
up to addition of a spatially constant function to $P_\ep$ and, moreover, has the properties 
\begin{align}\notag
  &\nep (x,t)\ge 0 
  \quad \mbox{and}
  \\\label{ineq;lower;c} 
  &\cep (x,t)\ge 
  \left(\inf_{y\in{\Omega}}c_0(y)\right)e^{-t}
 \quad \mbox{for all}\ x\in \Om \;\text{ and } \ t\in (0,\tmax). 
\end{align} 
\end{lem}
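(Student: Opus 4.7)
My plan is to prove Lemma \ref{lem;local existence} by a standard Banach fixed-point argument, exploiting that the $\ep$-regularization in \eqref{apcp1} bounds the nonlinear drift coefficient.

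First I would fix $\ep\in(0,1)$ and, for $T>0$ and $R>0$ to be chosen, consider the closed set
\[
 S_{T,R}:=\Big\{(\tilde n,\tilde c,\tilde u)\in C^0(\ol{\Omega}\times[0,T])\times C^0(\ol{\Omega}\times[0,T])\times C^0([0,T];D(A^\alpha))\;\Big|\; \text{bounds by }R\Big\},
\]
with a positivity condition $\tilde c\ge \tfrac12 \inf_\Omega c_0$ ensuring that $\tfrac{1}{\tilde c}$ is well-defined and bounded. Given $(\tilde n,\tilde c,\tilde u)\in S_{T,R}$, I would successively solve the three decoupled linear problems: (i) the Stokes problem $u_t+Au=\HP(\tilde n\nabla \Phi)$, $u(\cdot,0)=u_0$ via the semigroup $(e^{-tA})_{t\ge0}$ and use $\alpha>\tfrac{N}{4}$ to control $\|\tilde u\|_{L^\infty}$ through $\|A^\alpha \tilde u\|_{L^2}$; (ii) the linear inhomogeneous convection--diffusion equation for $c$ with the Neumann heat semigroup, source $\tilde n$ and drift $\tilde u$; (iii) the linear equation for $n$ of the form $n_t = \Delta n -\nabla\cdot(b n)-\tilde u\cdot \nabla n$ with bounded vector field $b=\tfrac{\chi}{(1+\ep\tilde n)\tilde c}\nabla \tilde c$. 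Since $\ep$-regularization yields $\tfrac{1}{1+\ep \tilde n}\le \tfrac1\ep$ and since $\tilde c$ is bounded below, the coefficient $b$ is bounded in a suitable norm, so standard parabolic theory produces a unique classical solution. Choosing first $R$ (in terms of the data) and then $T=T(R,\ep)$ small enough, I would verify both that this mapping takes $S_{T,R}$ into itself and acts as a contraction in a weaker norm; Banach's fixed-point theorem then supplies a local classical solution.

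Next, the extension to a maximal existence time $\tmax$ together with the blow-up criterion is obtained by iterating the above construction and noting that the length of the existence interval produced at each step depends only on $\|\nep(\cdot,t_0)\|_{L^\infty}$, $\|\cep(\cdot,t_0)\|_{W^{1,\infty}}$ and $\|A^\alpha \uep(\cdot,t_0)\|_{L^2}$; if none of these quantities blows up as $t_0\nearrow \tmax<\infty$, the solution could be continued past $\tmax$, contradicting maximality. Parabolic regularity theory combined with $\Phi\in C^2(\ol\Omega)$ and \eqref{condi;ini1}--\eqref{condi;ini4} then lifts the solution to the stated regularity classes, and uniqueness is standard (up to a spatially constant shift of $P_\ep$ due to the pressure).

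Finally, nonnegativity of $\nep$ follows from the parabolic maximum principle applied to \eqref{apcp1}, rewritten as
\[
 \nept+(\uep-b_\ep)\cdot\nabla\nep -\Delta\nep +\nabla\cdot b_\ep\, \nep=0, \qquad b_\ep:=\frac{\chi \nabla\cep}{(1+\ep\nep)\cep},
\]
with $b_\ep$ bounded on any $[0,t]\subset[0,\tmax)$, together with $n_0\ge0$. For the lower bound on $\cep$ I would set $\underline c(t):=(\inf_{y\in\Omega}c_0(y))e^{-t}$, which is spatially constant and satisfies $\underline c_t+\uep\cdot\nabla\underline c-\Delta\underline c+\underline c-\nep = -\nep\le 0$ since $\nabla\cdot\uep=0$ and $\nep\ge0$. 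Subtracting from \eqref{apcp2}, the difference $w:=\cep-\underline c$ solves $w_t+\uep\cdot\nabla w-\Delta w + w \ge 0$ with $w(\cdot,0)\ge0$ and homogeneous Neumann data, so the parabolic maximum principle yields $w\ge0$, i.e.\ \eqref{ineq;lower;c}.

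The main obstacle is verifying the contraction estimate and the self-mapping property at step (iii), because the drift coefficient in the $n$-equation involves $\nabla \tilde c$ and the quotient $1/\tilde c$, both of which couple all three components; careful use of $L^p$--$L^q$ smoothing for the Neumann heat semigroup (for $c$) and for $e^{-tA}$ (for $u$), together with the lower bound on $\tilde c$ forced by the definition of $S_{T,R}$, is what makes the argument close.
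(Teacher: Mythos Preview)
Your approach coincides with the paper's: there the proof is just a reference to standard fixed-point arguments for chemotaxis(-fluid) systems (\cite{BBTW}, \cite{win_CTNS_global_largedata}, with the singular-sensitivity adaptation of \cite{lankeit_m2as}) together with the comparison principle for \eqref{ineq;lower;c}, which is precisely the scheme you outline. One small technical correction: your fixed-point space for $\tilde c$ is only $C^0(\overline\Omega\times[0,T])$, but the drift $b$ in step (iii) requires control of $\nabla\tilde c$; you should either include an $L^\infty_tW^{1,q}_x$-type norm (some $q>N$) for $\tilde c$ in the definition of $S_{T,R}$---consistent with the target regularity $\cep\in L^\infty_{loc}([0,\tmax);W^{1,\infty}(\Omega))$---or feed the already-solved $c$ from step (ii) into $b$ rather than $\tilde c$.
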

\begin{proof}
Well-known fixed point arguments, often used in chemotaxis systems (see e.g. 
\cite[Lemma 3.1]{BBTW} 
and 
\cite[Lemma 2.1]{win_CTNS_global_largedata}), 
can be adapted in a similar way as in 
\cite[Theorem 2.3 (i)]{lankeit_m2as}
so as to compensate for the singular sensitivity present in our setting. With these necessary adjustments, the local existence and uniqueness result can be obtained in a straightforward manner. The estimates in \eqref{ineq;lower;c} are direct consequences of the comparison principle.
\end{proof}

In the following, we will always assume that 
$N$, $\Om$, $\chi$, $n_0$, $c_0$, $u_0$, $\Phi$ and $\alpha$ 
obey the conditions of Lemma \ref{lem;local existence} and are fixed. 
For given $\ep\in(0,1)$, by $(\nep,\cep,\uep,P_\ep)$ we will denote the corresponding solution 
to \eqref{apcp1}--\eqref{apcp5} given by Lemma \ref{lem;local existence} and 
by $\tmax$ its maximal existence time. 
Let us continue with some elementary inequalities 
for $\nep$ and $\cep$. 
%

%
%
\begin{lem}\label{lem;L1;nepcep}
For all $\eps \in(0,1)$, 
\[
 \io \nep \cd =\io n_0 \qquad \text{for all } t\in(0,\tmax);
\]
and there is $C>0$ such that for any $q\in(0,1]$ 
\[
 \io \cep^q\cd \le C\qquad \text{for all } t\in(0,\tmax) \text{ and } \eps\in(0,1).
\]
%
\end{lem}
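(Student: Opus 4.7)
The plan is to establish the two bounds by straightforward energy-type manipulations on the approximate system.

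\textbf{Step 1 (Mass conservation for $\nep$).} I would integrate \eqref{apcp1} over $\Omega$. The Neumann boundary condition \eqref{apcp4} makes the pure Laplacian contribution vanish, and it similarly annihilates the chemotactic flux $\chi\frac{\nep}{(1+\ep\nep)\cep}\nabla\cep$ after integration by parts. For the convective term, the identity $\uep\cdot\nabla\nep=\nabla\cdot(\uep\nep)$ (valid since $\nabla\cdot\uep=0$) together with $\uep|_{\pa\Omega}=0$ from \eqref{apcp4} yields $\int_\Omega \uep\cdot\nabla \nep=0$. Thus $\frac{d}{dt}\int_\Omega \nep=0$ on $(0,\tmax)$ and the first claim follows.

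\textbf{Step 2 ($L^1$ bound for $\cep$).} Testing \eqref{apcp2} with $1$ and treating the convective term exactly as above gives
\[
 \f{d}{dt}\io \cep = -\io \cep + \io \nep = -\io \cep + \io n_0,
\]
where the last equality uses Step 1. A direct ODE comparison then yields
\[
 \io \cep(\cdot,t) \le \max\!\left\{\io c_0,\ \io n_0\right\}=:M \quad\text{for all } t\in(0,\tmax),\ \ep\in(0,1).
\]

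\textbf{Step 3 ($L^q$ bound, uniform in $q\in(0,1]$).} Since $s\mapsto s^q$ is concave on $[0,\infty)$ for $q\in(0,1]$, Jensen's inequality (equivalently Hölder with exponents $1/q$ and $1/(1-q)$) gives
\[
 \io \cep^q \le |\Omega|^{1-q}\kl{\io \cep}^{\!q} \le |\Omega|^{1-q} M^q.
\]
The function $q\mapsto (1-q)\log|\Omega|+q\log M$ is affine on $[0,1]$, so its maximum there equals $\max\{\log|\Omega|,\log M\}$; consequently $|\Omega|^{1-q} M^q\le \max\{|\Omega|,M\}$ uniformly in $q\in(0,1]$, which yields the claimed constant $C$.

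I do not expect a serious obstacle here: the argument is entirely elementary and avoids the chemotactic cross-diffusion, since the contribution of $\uep$ drops out thanks to $\nabla\cdot\uep=0$ combined with $\uep|_{\pa\Omega}=0$. The only mild point to be careful about is to obtain the bound \emph{uniformly} in $q$, which is handled by the log-affine interpolation of $|\Omega|^{1-q}M^q$ rather than attempting a differential inequality for $\io\cep^q$ (which would introduce the troublesome $-(q-1)\io\cep^{q-2}|\na\cep|^2$ of the wrong sign for $q<1$).
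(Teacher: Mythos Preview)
Your proof is correct and follows essentially the same route as the paper: integrate the first two equations using $\nabla\cdot\uep=0$ and the boundary conditions to obtain mass conservation and an $L^1$ bound for $\cep$, then apply H\"older/Jensen to pass to $\io\cep^q$. The only cosmetic difference is the final uniform-in-$q$ estimate: you bound $|\Omega|^{1-q}M^q$ by $\max\{|\Omega|,M\}$ via the log-affine argument, whereas the paper simply uses $|\Omega|^{1-q}C_1^q\le(1+|\Omega|)(1+C_1)$.
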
 
\begin{proof}
The first part of the lemma  and existence of $C_1>0$ such that $\io \cep\cd \le C_1$ for all $t\in(0,\tmax)$ and $\eps\in(0,1)$ result from integration of 
\eqref{apcp1} and \eqref{apcp2}  
due to 
$\na\cdot \uep =0$ in $\Om\times(0,\tmax)$ and \eqref{apcp4}-\eqref{apcp5}. 
The second part is an immediate consequence, since for any $q\in(0,1]$ and any $\eps\in(0,1)$
\begin{equation*}
  \io \cep^q \le |\Omega|^{1-q} \left(\io \cep\right)^{q} \le |\Omega|^{1-q}C_1^q\le (1+|\Om|)(1+C_1) \qquad \text{in } (0,\tmax).\qedhere
\end{equation*}
\end{proof}

The following well-known result links the regularity of $\uep$ to the known regularity of $\nep$. 

\begin{lem}\label{lem;regurality;u}
For all $p\ge 1$,   
\[
\begin{cases}
r \in [1,\frac{Np}{N-p}) & \mbox{if} \ p\le N, \\
r \in [1,\infty]& \mbox{if} \ p>N,
\end{cases}
\quad \mbox{and} \quad 
\begin{cases}
q \in [1,\frac{Np}{N-2p}) & \mbox{if} \ p\le \frac{N}{2}, \\
q \in [1,\infty]& \mbox{if} \ p>\frac{N}{2}, 
\end{cases}
\] 
there exist $C_r>0$ and $C_q>0$ such that 
for all $M>0$ and all $\ep\in (0,1)$ the following holds\/{\rm :} 
If   
\begin{align*}
\lp{p}{\nep \cd} \le M \quad \mbox{for all}\ t\in (0,\tmax), 
\end{align*}
then 
\[
  \|\uep\cd\|_{W^{1,r}(\Omega)} \le C_r (1+M)
\quad \mbox{and} \quad
  \lp{q}{\uep\cd}\le C_q (1+M)
  \quad \mbox{for all}\ t\in (0,\tmax). 
\] 
\end{lem}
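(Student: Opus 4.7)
\textbf{Proof sketch for Lemma \ref{lem;regurality;u}.} The strategy is the standard one based on the variation-of-constants formula for the Stokes equation together with $L^p$--$L^q$ smoothing estimates for the analytic semigroup $(e^{-tA})_{t\ge 0}$. Rewriting \eqref{apcp3} as $u_{\ep t}+Au_\ep=\HP(\nep\nabla\Phi)$ and invoking Duhamel yields
\[
  u_\ep(\cdot,t)=e^{-tA}u_0+\int_0^t e^{-(t-s)A}\HP\bigl(\nep(\cdot,s)\,\nabla\Phi\bigr)\,ds.
\]
Because $\Phi\in C^2(\ol\Omega)$, $\|\nabla\Phi\|_{L^\infty(\Omega)}$ is finite, so the source $\HP(\nep\nabla\Phi)$ is bounded in $L^p(\Omega)$ uniformly in $t$ by $M\|\nabla\Phi\|_{L^\infty}$ whenever $\lp{p}{\nep\cd}\le M$ holds.

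To obtain the $L^q$ estimate we apply the semigroup bound $\|e^{-tA}\HP\psi\|_{L^q}\le C\,t^{-\frac{N}{2}(\frac1p-\frac1q)}e^{-\lambda t}\|\psi\|_{L^p}$ (valid on bounded smooth domains by Giga--Miyakawa type theory). Integrating over $s$ in the Duhamel formula is possible precisely when $\frac{N}{2}(\frac1p-\frac1q)<1$, which is exactly the stated admissible range for $q$ (with the case $p>\frac{N}{2}$ allowing $q=\infty$ up to an arbitrarily small time-exponent loss). For the $W^{1,r}$ bound we use the analogous gradient estimate $\|\nabla e^{-tA}\HP\psi\|_{L^r}\le C\,t^{-\frac12-\frac{N}{2}(\frac1p-\frac1r)}e^{-\lambda t}\|\psi\|_{L^p}$, and integrability of this kernel in $s$ is equivalent to $\frac1r>\frac1p-\frac1N$, i.e.\ $r<\frac{Np}{N-p}$ when $p\le N$ (respectively $r\le\infty$ when $p>N$). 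Combining with the uniform $L^q$-bound already obtained furnishes a full $W^{1,r}$ estimate.

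The initial-data contribution $e^{-tA}u_0$ is handled by the assumption $u_0\in D(A^\alpha)$ with $\alpha>\frac{N}{4}$: via Giga's identification $D(A^\alpha)\hookrightarrow W^{2\alpha,2}(\Omega)\cap L^2_\sigma(\Omega)$, the Sobolev embedding $W^{2\alpha,2}\hookrightarrow L^\infty\cap W^{1,\tilde r}$ holds for every $\tilde r$ allowed by $2\alpha-1>N(\frac12-\frac1{\tilde r})$, which covers the stated range of $r$ after a straightforward comparison of exponents; contraction of $e^{-tA}$ on $D(A^\alpha)$ then yields a bound independent of $t$. The main technical point is the exponent bookkeeping at the endpoint situations $p=N$ and $p=\frac{N}{2}$, handled as usual by replacing the critical exponent with a slightly subcritical one and absorbing the resulting gap into the constants $C_r,C_q$; no difficulty arises from $\eps$, since the constants in the semigroup estimates depend only on $\Omega$.
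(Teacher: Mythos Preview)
Your sketch is correct and coincides with the approach underlying the references the paper cites: the Duhamel representation for the Stokes equation combined with $L^p$--$L^q$ and gradient smoothing estimates for $e^{-tA}$, together with the embedding $D(A^\alpha)\hookrightarrow W^{2\alpha,2}$ for the initial-data contribution. One minor overstatement: since only $\alpha<1$ is assumed, the embedding $W^{2\alpha,2}\hookrightarrow W^{1,\tilde r}$ does not reach $\tilde r=\infty$, so the endpoint $r=\infty$ (admitted when $p>N$) is not obtained uniformly as $t\to 0$ by your initial-data argument as written---but this endpoint is never actually invoked in the paper, and the ranges that are used (e.g.\ $r<\tfrac32$ in Corollary~\ref{lem;Lr;u}) are fully covered.
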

\begin{proof}
The outcome of this lemma can be achieved by utilizing regularity estimates for the Stokes semigroup and embedding properties for domains of fractional powers of the Stokes operator (cf. e.g. \cite[Lemma 2.3]{cao_lankeit}). Detailed proofs can be found in \cite[Lemmata 2.4 and 2.5]{wang_xiang} ($N=2$) and \cite[Corollary 3.4]{win_ctfluid3dnonlineargeneral}
 ($N=3$). 
\end{proof}

Then Lemma \ref{lem;regurality;u} together with the $L^1$-estimate for $\nep$ tells us the following estimates. 

\begin{corollary}\label{lem;Lr;u}
For all $r\in [1,\frac{3}{2})$ and $q\in [1,3)$, there are $C_r>0$ and $C_q>0$ such that 
\[
  \|\uep\cd\|_{W^{1,r}(\Omega)} \le C_r 
\quad \mbox{and} \quad
  \lp{q}{\uep\cd}\le C_q 
\]
hold for all $t\in (0,\tmax)$ and $\ep\in (0,1)$. 
\end{corollary}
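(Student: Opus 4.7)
The plan is to simply invoke Lemma \ref{lem;regurality;u} with the particular choice $p=1$, feeding it the uniform $L^1$-bound on $\nep$ that Lemma \ref{lem;L1;nepcep} already provides.

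First, I would note that by mass conservation (Lemma \ref{lem;L1;nepcep}), $\lp{1}{\nep\cd}=\lp{1}{n_0}=:M$ for all $t\in(0,\tmax)$ and all $\ep\in(0,1)$, so the hypothesis of Lemma \ref{lem;regurality;u} is satisfied with $p=1$ and the constant $M$ independent of $\ep$. Next I would read off the admissible exponents from the lemma for $p=1$: since $p=1\le N$ in both cases $N\in\{2,3\}$, the gradient estimate holds for $r\in[1,\tfrac{Np}{N-p})=[1,\tfrac{N}{N-1})$, i.e.\ $r\in[1,2)$ when $N=2$ and $r\in[1,\tfrac{3}{2})$ when $N=3$; the intersection $[1,\tfrac{3}{2})$ thus works uniformly in $N\in\{2,3\}$, which is exactly the range asserted in the statement. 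Similarly, since $p=1\le \tfrac{N}{2}$ for $N\in\{2,3\}$, the $L^q$-estimate on $\uep$ holds for $q\in[1,\tfrac{Np}{N-2p})$, which yields $q\in[1,\infty)$ when $N=2$ and $q\in[1,3)$ when $N=3$; the common range is $[1,3)$, again matching the claim.

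Then the desired constants are simply $C_r$ and $C_q$ from Lemma \ref{lem;regurality;u} multiplied by $(1+M)$, which depends only on the fixed initial datum $n_0$ and not on $\ep$ or $t$. There is no real obstacle here; the only point to stress is that both the allowed range of $r$ (namely $[1,\tfrac{3}{2})$) and the allowed range of $q$ (namely $[1,3)$) are dictated by the three-dimensional case, while the two-dimensional case permits strictly larger ranges but of course also contains the stated ones. Writing this out as a one-line application of Lemma \ref{lem;regurality;u} together with Lemma \ref{lem;L1;nepcep} completes the proof.
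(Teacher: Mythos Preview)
Your proposal is correct and follows exactly the same route as the paper: invoke Lemma~\ref{lem;regurality;u} with $p=1$, using the uniform $L^1$-bound on $\nep$ from Lemma~\ref{lem;L1;nepcep}. Your additional care in tracking the admissible ranges of $r$ and $q$ for $N=2$ versus $N=3$ is accurate and simply spells out what the paper leaves implicit.
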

\begin{proof}
Combination of Lemmata \ref{lem;L1;nepcep} and \ref{lem;regurality;u} implies this lemma. 
\end{proof}

As last preparatory step for the proof of global-in-time solutions to the approximate system, we shall show the following lemma.

\begin{lem}\label{lem;L2;n;ap}
  For each $\ep\in (0,1)$ and all $T\in (0,\infty)$ there is $C_\ep (T)>0$ such that 
  \[
    \lp{2}{\nep\cd} + \lp{2}{\cep\cd} \le C_\ep(T)   
  \]
holds for all $t\in (0,\min\{T,\tmax\})$. 
\end{lem}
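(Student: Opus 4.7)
The plan is to exploit two features that make this estimate an essentially standard semilinear energy argument once $\varepsilon>0$ is fixed: the approximation gives $\frac{\nep}{1+\ep\nep}\le \frac1\ep$, and Lemma~\ref{lem;local existence} (specifically~\eqref{ineq;lower;c}) yields a uniform lower bound $\cep\ge (\inf c_0)\,e^{-T}=:\delta(T)>0$ on $\Omega\times(0,\min\{T,\tmax\})$. Consequently the coefficient $\frac{\chi\nep}{(1+\ep\nep)\cep}$ appearing in \eqref{apcp1} is bounded above by $K(\ep,T):=\frac{\chi}{\ep\delta(T)}$.

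First I would test \eqref{apcp1} with $\nep$ in $L^2(\Omega)$. The convective term vanishes after integration by parts because $\nabla\cdot\uep=0$ and $\uep=0$ on $\partial\Omega$, and the chemotactic term is estimated via the bound $K(\ep,T)$ together with Young's inequality:
\begin{align*}
\tfrac12\tfrac{d}{dt}\io\nep^2 + \io|\nabla\nep|^2
&= \chi\io \tfrac{\nep}{(1+\ep\nep)\cep}\,\nabla\cep\cdot\nabla\nep\\
&\le \tfrac12\io|\nabla\nep|^2 + \tfrac{K(\ep,T)^2}{2}\io|\nabla\cep|^2.
\end{align*}
Next I would test \eqref{apcp2} with $\cep$; again the transport term vanishes, and Young's inequality on $\io\nep\cep$ produces
\[
\tfrac{d}{dt}\io\cep^2 + 2\io|\nabla\cep|^2 + \io\cep^2 \le \io\nep^2.
\]

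Then I would combine the two inequalities by multiplying the $c$-estimate by a constant $\lambda=\lambda(\ep,T)$ chosen so that the contribution $\lambda\cdot 2\io|\nabla\cep|^2$ absorbs the $\frac{K(\ep,T)^2}{2}\io|\nabla\cep|^2$ term on the right of the $n$-estimate. Setting $y(t):=\io\nep^2(\cdot,t)+\lambda\io\cep^2(\cdot,t)$ this yields a differential inequality of the form
\[
y'(t) + \io|\nabla\nep|^2 + \lambda\io\cep^2 \le \lambda\io\nep^2 \le \lambda\,y(t)
\qquad\text{on }(0,\min\{T,\tmax\}).
\]
Since $n_0\in C^0(\Ombar)$ and $c_0\in W^{1,\infty}(\Omega)$ give $y(0)<\infty$, Gr\"onwall's lemma furnishes a bound $y(t)\le y(0)\,e^{\lambda t}\le C_\ep(T)$ on the time interval in question, from which the claimed $L^2$ bounds on both $\nep$ and $\cep$ follow immediately.

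I do not anticipate any serious obstacle: the only subtle point is bookkeeping the $\ep$- and $T$-dependence of the constants arising from $\frac1{1+\ep\nep}\le\frac1\ep$ and $\frac1{\cep}\le\delta(T)^{-1}$, but this is permissible because the statement allows the constant $C_\ep(T)$ to depend on both quantities.
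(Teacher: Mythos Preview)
Your proposal is correct and follows essentially the same route as the paper's proof: test \eqref{apcp1} by $\nep$ and \eqref{apcp2} by $\cep$, bound the chemotactic coefficient via $\frac{\nep}{1+\ep\nep}\le\frac{1}{\ep}$ and $\cep^{-1}\le e^{T}(\inf c_0)^{-1}$, then add the two inequalities after multiplying the $c$-estimate by a constant large enough to absorb $\io|\nabla\cep|^2$, and finish with Gr\"onwall. The constants and the choice of multiplier match the paper's (your $K(\ep,T)$ is the paper's $C_1$ and your $\lambda$ is its $\kappa$).
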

\begin{proof}
Given $\ep\in(0,1)$, we test \eqref{apcp1} by $\frac{1}{2} \nep$ and make use of the lower bound for $c_\ep$ established in \eqref{ineq;lower;c} of Lemma \ref{lem;local existence} to estimate  
$\chi\frac{\nep}{(1+\ep\nep)\cep} \le \frac{\chi e^T}{\ep \inf c_0}=: C_1>0$  
and derive that  
\begin{align}\label{ineq;L2;n;ap}
  \frac 12 \frac{d}{dt} \io \nep^2 
&=  -\io |\nabla \nep|^2 + \chi \io \frac{\nep}{(1+\ep\nep)\cep}\nabla \nep\cdot \nabla \cep 
\le -\frac 12 \io|\nabla \nep|^2 + \frac{C_1^2}{2} \io |\nabla \cep|^2 
\end{align}
in $(0,\min\{T,\tmax\})$. 
Similarly, \eqref{apcp2} implies 
\begin{align}\label{ineq;L2;c;ap}
  \frac 12 \frac{d}{dt} \io \cep^2 
&= -\io |\nabla \cep|^2 + \io \nep\cep - \io \cep^2 
 \le -\io |\nabla \cep|^2 +\frac 12 \io \nep^2 -\frac 12 \io \cep^2
\end{align}
in $(0,\min\{T,\tmax\})$. 
Thus fixing $\kappa > \frac{C_1^2}{2}$, 
we can see from \eqref{ineq;L2;n;ap} and \eqref{ineq;L2;c;ap} that 
\begin{align*}
  \frac 12 \frac{d}{dt} \left( \io \nep^2 +\kappa \io \cep^2\right)
& \le \frac{\kappa}{2} \io \nep^2 - \frac{\kappa}{2} \io \cep^2 -\frac 12 \io |\nabla \nep|^2 
- \left( \kappa - \frac{C_1^2}{2}\right) \io |\nabla \cep|^2
\\
& \le  \frac \kappa 2\left(\io \nep^2 + \io \cep^2\right) \quad\text{holds in }(0,\min\{T,\tmax\}), 
\end{align*}
which implies the existence of $C_2(T)>0$ such that $\| \nep\cd \|_{L^2(\Omega)}+\| \cep \cd \|_{L^2(\Omega)}\leq C_2(T)$ for all $t\in(0,\min\{T,\tmax\})$. 
\end{proof}

Thanks to these bounds, we can attain the global existence of approximate solutions.  

\begin{lem}\label{Tmax=infty}
For any $\ep\in (0,1)$, we have $\tmax=\infty$. 
\end{lem}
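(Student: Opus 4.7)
The plan is to argue by contradiction: assume that $\tmax<\infty$ and show that each of the three quantities in the extensibility criterion of Lemma \ref{lem;local existence} stays bounded up to $\tmax$. The key simplification compared with the actual PDE is that the approximation factor $\frac{1}{1+\ep\nep}$, combined with the lower bound $\cep\ge (\inf c_0)e^{-t}$ from \eqref{ineq;lower;c}, turns the singular drift coefficient $\chi\frac{\nep}{(1+\ep\nep)\cep}$ into a uniformly bounded (though $\ep$-dependent) function on any fixed time interval $(0,T)$ with $T<\tmax$. Hence the approximate system behaves like a standard chemotaxis--Stokes system with bounded sensitivity, and the extension proof reduces to a routine bootstrap that I would organize as follows.

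Fix $T<\tmax$. From Lemma \ref{lem;L2;n;ap} we already have $\nep,\cep\in L^\infty((0,T);L^2(\Omega))$, and Lemma \ref{lem;regurality;u} applied with $p=2$ upgrades this to bounds for $\uep$ in $L^\infty((0,T);L^q(\Omega))$ for $q$ arbitrarily large if $N=2$ and $q<6$ if $N=3$. With such $\uep$ and with $\nep\in L^\infty((0,T);L^2(\Omega))$ as source, I would treat \eqref{apcp2} as the linear equation $\cept-\Delta\cep+\cep=\nep-\uep\cdot\nabla\cep$ and apply the smoothing properties of the Neumann heat semigroup iteratively (or, equivalently, a maximal Sobolev regularity argument) to reach $\cep\in L^\infty((0,T);W^{1,\infty}(\Omega))$. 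Bootstrapping $\|\nep\|_{L^p}$ via Lemma \ref{lem;regurality;u} along the way (to bring $\uep$ into $L^\infty_tL^\infty_x$ on finite time intervals) makes this step transparent.

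Once $\cep$ is controlled in $W^{1,\infty}$ and bounded from below by \eqref{ineq;lower;c}, the coefficient
\[
  \chi\frac{\nep}{(1+\ep\nep)\cep}\le \frac{\chi e^T}{\ep\,\inf_\Omega c_0}
\]
is bounded on $(0,T)$, so \eqref{apcp1} becomes an advection--diffusion equation with bounded drift and a bounded transport velocity $\uep$. Testing by $\nep^{p-1}$ for $p>1$ yields $L^p$-bounds for any finite $p$, and a standard Moser/Alikakos iteration — with constants depending only on $\ep$, $T$, $\|\nabla\cep\|_{L^\infty}$ and $\|\uep\|_{L^\infty}$ — promotes this to $\nep\in L^\infty((0,T);L^\infty(\Omega))$.

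Finally, since $\nep\in L^\infty((0,T);L^\infty(\Omega))$ and $\Phi\in C^2(\ol\Omega)$, the forcing $\HP(\nep\nabla\Phi)$ in the Stokes equation \eqref{apcp3} is bounded in $L^\infty_tL^\infty_x$; the usual $A^\alpha$-estimate for the Stokes semigroup (applicable since $\alpha<1$) then gives $\|A^\alpha \uep\cd\|_{L^2(\Omega)}\le C(T,\ep)$ on $(0,T)$. Letting $T\nearrow\tmax$ produces bounds for $\|\nep\cd\|_{L^\infty}+\|\cep\cd\|_{W^{1,\infty}}+\|A^\alpha\uep\cd\|_{L^2}$ that contradict the blow-up alternative of Lemma \ref{lem;local existence}, so $\tmax=\infty$. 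The only step requiring some (routine) care is the $\cep$-regularity bootstrap, but no genuine obstacle arises because the $\ep$-regularization removes the singularity and all constants are allowed to depend on $\ep$.
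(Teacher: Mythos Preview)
Your proposal is correct and follows essentially the same contradiction-plus-bootstrap route as the paper: start from the $L^2$ bounds of Lemma \ref{lem;L2;n;ap}, feed them into Lemma \ref{lem;regurality;u} to control $\uep$, and then iteratively upgrade $\cep$ and $\nep$ via heat-semigroup estimates until the blow-up alternative of Lemma \ref{lem;local existence} is violated. One minor point: for $N=3$ with $p=2>N/2$, Lemma \ref{lem;regurality;u} already yields $\uep\in L^\infty_tL^\infty_x$ directly (not merely $q<6$), which the paper exploits to shorten the bootstrap (testing \eqref{apcp2} by $-\Delta\cep$, then two applications of $L^p$--$L^q$ semigroup estimates), but this does not affect the validity of your argument.
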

\begin{proof}
Let us assume that for some $\ep\in(0,1)$ we had $\tmax < \infty$. 
Then in light of Lemma \ref{lem;L2;n;ap} and Lemma \ref{lem;regurality;u}, 
$\|\uep\|_{L^\infty(\Omega \times (0,\tmax))}$ were finite. Testing \eqref{apcp2} by $-\Delta \cep$ and combining this bound with, once again, the bound for $\|\nep\|_{L^\infty((0,\tmax);L^2(\Omega))}$ from Lemma \ref{lem;L2;n;ap} would yield 
boundedness of $\|\nabla \cep\|_{L^\infty((0,\tmax);L^2(\Omega))}$, 
which, with $L^p$-$L^q$ estimates for 
the Neumann heat semigroup (see \cite[Lemma 1.3]{win_aggregationvs}) 
could be turned into 
a bound for $\|\nabla \cep\|_{L^\infty((0,\tmax);L^4(\Omega))}$. Another application of the $L^p$-$L^q$ estimates, this time in \eqref{apcp1}, would establish 
the boundedness of $\|\nep\|_{L^\infty(\Omega\times (0,\tmax))}$, 
by the extensibility criterion contradicting $\tmax<\infty$.
\end{proof}

Having achieved global-in-time solutions to the regularized problems, we will next focus on obtaining $\eps$-independent information on our approximate solutions. For small values of $\chi$ a bound on $\int_\Omega \nep^p\cep^{-r}$ for $p>1$ and some suitable $r$ was a key point in obtaining information on $\cep$. This, however, does not work for $p<1$. Nevertheless, in the case of $N=2$ the link between the regularity of $\uep$ and $\nep $ (Lemma \ref{lem;Lr;u}) provides sufficient information on $\uep$ to combine an interpolation inequality with standard semigroup estimates in order to obtain a useful estimate for $\cep$.

\begin{lem}\label{lem;Lresti;cep;N=2}
  Let $N=2$ and let $r\in (1,\infty)$. 
  Then there is $C_r>0$ such that 
  \[
    \lp{r}{\cep \cd}\le C_r \quad \mbox{for all} \ t>0 
    \ \mbox{and all} \ \ep \in (0,1).   
  \]
\end{lem}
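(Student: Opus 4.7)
The strategy is to combine Duhamel's formula for $\cep$ with $L^p$--$L^q$ smoothing estimates for the Neumann heat semigroup and the $\ep$-independent bounds $\|\uep(\cdot,t)\|_{L^q}\le C_q$ for $q\in[1,3)$ supplied by Corollary \ref{lem;Lr;u}. Exploiting $\nabla\cdot\uep=0$ together with $\uep|_{\pa\Omega}=0$, I would first rewrite $\uep\cdot\nabla\cep=\nabla\cdot(\uep\cep)$ and represent
$$
\cep(\cdot,t)=e^{t(\Delta-1)}c_0+\int_0^t e^{(t-s)(\Delta-1)}\nep(\cdot,s)\,ds\;-\;\int_0^t e^{(t-s)(\Delta-1)}\nabla\cdot(\uep\cep)(\cdot,s)\,ds.
$$
Taking $L^r$-norms, the $c_0$-contribution is immediately bounded by $\|c_0\|_{L^\infty}|\Omega|^{1/r}$. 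The $\nep$-contribution is controlled via the standard smoothing $\|e^{(t-s)(\Delta-1)}\nep\|_{L^r}\le C e^{-(t-s)}(1+(t-s)^{-(1-1/r)})\|\nep\|_{L^1}$ and the mass identity $\|\nep\|_{L^1}=\|n_0\|_{L^1}$ from Lemma \ref{lem;L1;nepcep}; the resulting time integral converges because $N=2$ forces the exponent $1-\tfrac{1}{r}$ to lie strictly below $1$.

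The difficult piece is the convection term, for which I would invoke the gradient version of the semigroup estimate,
$$
\|e^{(t-s)(\Delta-1)}\nabla\cdot(\uep\cep)\|_{L^r}\le C\,e^{-(t-s)}(t-s)^{-\frac{1}{2}-(1/p-1/r)}\|\uep\cep\|_{L^p},
$$
valid for $p\in\bigl(\tfrac{2r}{r+2},r\bigr]$ (so that the time singularity is integrable in $N=2$), and then split by Hölder: $\|\uep\cep\|_{L^p}\le\|\uep\|_{L^q}\|\cep\|_{L^s}$ with $\tfrac{1}{p}=\tfrac{1}{q}+\tfrac{1}{s}$. The key point is to arrange $q<3$ (so $\|\uep\|_{L^q}$ is uniformly bounded by Corollary \ref{lem;Lr;u}) and at the same time $s<r$ strictly; rearrangement shows this is possible precisely when $\tfrac{1}{3}<\tfrac{1}{p}-\tfrac{1}{r}<\tfrac{1}{2}$, which is a nonempty interval for every $r\in(1,\infty)$. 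Once $s<r$, the interpolation $\|\cep\|_{L^s}\le\|\cep\|_{L^1}^{1-\theta}\|\cep\|_{L^r}^\theta$ with $\theta=\tfrac{r(s-1)}{s(r-1)}\in(0,1)$ becomes available (using the uniform $L^1$-bound from Lemma \ref{lem;L1;nepcep}).

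Setting $M(T):=\sup_{t\in(0,T)}\|\cep(\cdot,t)\|_{L^r}$, collecting the three contributions leads to an inequality of the form
$$
M(T)\;\le\;C_0+C_1\,M(T)^{\theta},
$$
with $C_0,C_1$ independent of $T$ and $\ep$. Because $\theta<1$, Young's inequality gives $C_1 M(T)^\theta\le\tfrac{1}{2}M(T)+C_2$, whence $M(T)\le 2C_0+2C_2$, i.e.\ the desired $\ep$-uniform bound.

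The main obstacle, and the reason a naive application of semigroup smoothing does not suffice (as flagged in the introduction), is precisely this absorption step: choosing $s=r$ in the Hölder split would produce a convection estimate linear in $M(T)$ with a constant of size $\Gamma(\tfrac{1}{2}-\tfrac{1}{p_1})$ that cannot be made small. The trick of choosing $s<r$ and interpolating against the uniform $L^1$-bound on $\cep$ is what converts the dangerous term into the sublinear quantity $M(T)^\theta$, which can then be absorbed via Young. Verifying that the parameter ranges $p,q,s$ are genuinely compatible for every $r\in(1,\infty)$ is where the two-dimensional setting enters essentially, through the $L^q$-bounds on $\uep$ up to $q<3$.
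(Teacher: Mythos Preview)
Your approach is essentially the same as the paper's: Duhamel representation, $L^p$--$L^q$ smoothing for the semigroup, a H\"older split of $\uep\cep$, interpolation of the $\cep$-factor between the known $L^1$-bound and $L^r$, and absorption of the resulting sublinear term $M(T)^\theta$.

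One minor gap: by citing only the bound $q<3$ from Corollary~\ref{lem;Lr;u}, your parameter window $\tfrac{1}{3}<\tfrac{1}{p}-\tfrac{1}{r}<\tfrac{1}{2}$ forces $\tfrac{1}{p}>\tfrac{1}{r}+\tfrac{1}{3}$, hence $p<1$ whenever $r\le\tfrac{3}{2}$; but the gradient semigroup estimate $\|e^{\tau\Delta}\nabla\cdot f\|_{L^r}\le C\tau^{-\frac12-(\frac1p-\frac1r)}\|f\|_{L^p}$ is only available for $p\ge 1$. In $N=2$ this is easily repaired: Lemma~\ref{lem;regurality;u} with $p=1$ and $N-2p=0$ actually yields $\|\uep\|_{L^q}$ bounded for \emph{every} $q<\infty$ (the paper exploits this, cf.\ Remark~\ref{remark:doesntworkforNequal3}). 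Using that stronger input removes the constraint $\tfrac{1}{q}>\tfrac{1}{3}$, so you may take $p$ just below $r$ and hence $\ge 1$ for every $r>1$. Alternatively, prove the estimate first for, say, $r\ge 2$ and recover $r\in(1,2)$ by interpolation with the $L^1$-bound.
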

\begin{proof}
Let $r \in (1,\infty)$ and let 
$\theta \in (\max\{1,\frac{Nr}{N+r}\}, r)$. 
Then well-known semigroup estimates imply that with some constants $C_1,C_2,C_3>0$ and $\lambda$ being the first positive eigenvalue of $-\Delta$ 
\begin{align}\label{ineq;N=2;Lq;c;semi}\notag 
  \lp{r}{\cep \cd} 
  & \le 
  C_1 \lp{r}{c_0 } 
  + C_2 \int_0^t \left( 1+(t-s)^{-\frac N2(1 - \frac{1}{r})}\right) 
  e^{-\lambda (t-s)}\lp{1}{\nep (\cdot,s)} \,ds  
  \\
  & \quad \, 
  + C_3 \int_0^t \left( 1+(t-s)^{-\frac 12 - \frac N2(\frac{1}{\theta} - \frac{1}{r})}\right) 
  e^{-\lambda (t-s)}\lp{\theta}{(\cep\uep) (\cdot,s)} \,ds
\end{align}
for all $t>0$ and all $\ep\in(0,1)$. 
Here, noting that, in light of multiple applications of the H\"older inequality, for all $\delta \in (0,r-\theta)$, 
\[
  \lp{\theta}{\cep\uep} 
  \le 
  \lp{\frac{\theta(\theta+\delta)}{\delta}}{\uep} \lp{\theta+\delta}{\cep}
  \le 
  \lp{\frac{\theta(\theta+\delta)}{\delta}}{\uep}  
  \lp{1}{\cep}^a\lp{r}{\cep}^{1-a}
\]
for all $t>0$ and all $\ep\in(0,1)$, with $a=\frac{ \frac{1}{\theta+\delta}-\frac 1r  }{ 1-\frac 1r } \in (0,1)$, 
we conclude from \eqref{ineq;N=2;Lq;c;semi} and the bounds on
$\|\nep \|_{L^\infty((0,\infty);L^1(\Omega))}$, $\|\cep\|_{L^\infty((0,\infty);L^1(\Omega))}$ and 
$\|\uep\|_{L^\infty ((0,\infty);L^q (\Omega))}$ with $q\in (1,\infty)$, contained in Lemma \ref{lem;L1;nepcep} and Lemma \ref{lem;Lr;u}, that there is $C_4>0$ such that for all $T>0$  
\[
\lp{r}{\cep\cd}\leq C_4+C_4 \sup_{s\in (0,T)} \lp{r}{\cep(\cdot,s)}^{1-a}
\]
holds for all $t \in (0,T)$ and all $\ep\in(0,1)$, which completes the proof of the lemma with $C_r:=\sup\{x\in\R\mid x\le C_4(1+x^{1-a})\}<\infty$ due to $1-a<1$.  
\end{proof}

\begin{remark}\label{remark:doesntworkforNequal3}
Since $\frac{\theta(\theta+\delta)}{\delta}$ is decreasing with respect to $\delta$ and increasing with respect to $\theta$, inserting $\max\{1,\frac{Nr}{N+r}\}$  
in place of $\theta$ 
and $r-\theta$ instead of $\delta$, 
we see that we need control on $\lp{\varrho}{\uep}$ for some $\varrho>N$ for the reasoning of Lemma \ref{lem;Lresti;cep;N=2} to work. In consequence, this approach is not applicable in the setting of $N=3$, compare Lemma \ref{lem;Lr;u}.
\end{remark}

In the case $N=3$ obtaining a bound similar to the one provided by Lemma \ref{lem;Lresti;cep;N=2} seems to be rather difficult because if we consider $\frac d{dt}\int_\Omega\cep^q$ with $q>1$ we have to treat $\int_\Omega\nep\cep^{q-1}$, which we lack information on. However, considering $\frac d{dt}\int_\Omega\cep^q$ with $q<1$ enables us to obtain a space-time bound on $|\nabla \cep^\frac{q}2|^2$, which in a second step can at least be transformed into space-time information on $\cep$ in the case $N=3$, and in a third step helps to derive a bound on $\int_0^T\int_\Omega|\nabla \cep|^r$ for some $r\in[1,\tfrac54)$ for both of the cases $N=2$ and $N=3$. 

\begin{lem}\label{lem;L2;nablac^q/2}
For all $q\in (0,1)$ and any $T>0$ there exists $C(T)>0$ such that 
\[
  \iio |\nabla \cep^\frac q2|^2 \le C(T) 
\]
holds for all $\ep\in (0,1)$. 
\end{lem}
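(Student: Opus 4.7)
The plan is to multiply the second approximate equation \eqref{apcp2} by $c_\ep^{q-1}$ and integrate over $\Omega$. Since $c_\ep$ is $C^{2,1}$ and the lower bound \eqref{ineq;lower;c} provides $c_\ep \ge (\inf_\Omega c_0)\,e^{-t}>0$, the test function $c_\ep^{q-1}$ is admissible despite the negative exponent $q-1<0$. The time derivative yields $\f{1}{q}\f{d}{dt}\io c_\ep^q$, while the convective contribution $\io c_\ep^{q-1}\uep\cdot\nabla c_\ep=\f{1}{q}\io \uep\cdot \nabla c_\ep^q$ vanishes after integration by parts, thanks to $\nabla\cdot\uep=0$ in $\Omega$ and $\uep=0$ on $\partial\Omega$.

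Another integration by parts based on $\partial_\nu c_\ep=0$, together with the pointwise identity $|\nabla c_\ep^{q/2}|^2=\f{q^2}{4}c_\ep^{q-2}|\nabla c_\ep|^2$, turns the diffusion term into $\f{4(1-q)}{q^2}\io|\nabla c_\ep^{q/2}|^2$. The resulting differential identity reads
\[
 \f{d}{dt}\io c_\ep^q=\f{4(1-q)}{q}\io|\nabla c_\ep^{q/2}|^2 - q\io c_\ep^q + q\io n_\ep c_\ep^{q-1}.
\]
Note that although the gradient term enters the right-hand side with a positive sign (reflecting the concavity of $t\mapsto t^q$ on $(0,1)$, so that diffusion acts as a source for $\io c_\ep^q$ rather than a sink), the contribution $q\io n_\ep c_\ep^{q-1}$ stemming from the signal production is likewise nonnegative and may simply be discarded when estimating $\iio |\nabla c_\ep^{q/2}|^2$ from above.

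Integrating over $(0,T)$ thus gives
\[
 \f{4(1-q)}{q}\iio|\nabla c_\ep^{q/2}|^2 + q\iio n_\ep c_\ep^{q-1} = \io c_\ep^q(\cdot,T) - \io c_0^q + q\iio c_\ep^q \le \io c_\ep^q(\cdot,T) + q\iio c_\ep^q,
\]
and since Lemma \ref{lem;L1;nepcep} uniformly bounds both $\io c_\ep^q(\cdot,T)$ and $\io c_\ep^q$ by an $\ep$-independent constant $C$, one infers $\f{4(1-q)}{q}\iio|\nabla c_\ep^{q/2}|^2 \le C(1+qT)$, which rearranges to the claim. The only delicate point is the admissibility of $c_\ep^{q-1}$ as a test function, which is entirely settled by the strict pointwise positivity of $c_\ep$ from Lemma \ref{lem;local existence}; apart from that, this preparatory estimate amounts to a direct entropy-like computation and presents no further obstacle.
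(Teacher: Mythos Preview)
Your proof is correct and follows essentially the same approach as the paper: test \eqref{apcp2} with $c_\ep^{q-1}$, use $\nabla\cdot u_\ep=0$ to kill the transport term, integrate by parts to produce $\frac{4(1-q)}{q}\io|\nabla c_\ep^{q/2}|^2$, drop the nonnegative production term $q\io n_\ep c_\ep^{q-1}$, integrate in time, and invoke Lemma~\ref{lem;L1;nepcep}. Your write-up is in fact slightly more careful than the paper's in justifying the admissibility of $c_\ep^{q-1}$ via \eqref{ineq;lower;c} and in spelling out why the convective term vanishes.
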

\begin{proof}
We let $q\in (0,1)$ and $T>0$. 
Multiplying \eqref{apcp2} by $\frac 1q \cep^{q-1}$ and 
integrating over $\Omega$, 
we derive from integration by parts and $\nabla \cdot \uep=0$ in $\Omega\times (0,\infty)$ that for any $\ep\in(0,1)$ 
\begin{align}\label{ineq;testing;cp}
\frac 1q \frac d{dt} \io \cep^q 
&= 
\frac{4 (1-q)}{q^2} \io |\nabla \cep^\frac{q}{2}|^2 
+ \io \nep \cep^{q-1} -\io \cep^q 
\ge 
\frac{4 (1-q)}{q^2} \io |\nabla \cep^\frac{q}{2}|^2 
- \io \cep^q
\end{align}
is valid on $(0,T)$. Upon integration of \eqref{ineq;testing;cp} over $(0,T)$ we infer from Lemma \ref{lem;L1;nepcep} and the positivity of $\cep$, that there exists $C>0$ such that 
\begin{align*}
 \frac{4(1-q)}{q^2} \iio |\nabla \cep^{\frac{q}{2}}|^2 
 \le C\left( T+\frac 1q \right) 
\end{align*}
holds for all $\ep\in(0,1)$. 
\end{proof}
We refine the bound of Lemma \ref{lem;L2;nablac^q/2} to a space-time bound on $\cep$ in the case of $N=3$, the case of $N=2$ already being covered by Lemma \ref{lem;Lresti;cep;N=2}.
\begin{lem}\label{lem;Lresti;cep;N=3}
 Let $N=3$ and let $r\in [1,\frac{5}{3})$. 
 Then for all $T>0$ there is $C_r(T)>0$ such that  
 \[
   \iio \cep^r \le C_r(T) 
 \]
 holds for all $\ep\in (0,1)$. 
\end{lem}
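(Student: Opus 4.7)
The plan is to combine the uniform-in-time bound on $\io \cep^q$ from Lemma \ref{lem;L1;nepcep} with the space-time bound on $\iio |\nabla \cep^{q/2}|^2$ from Lemma \ref{lem;L2;nablac^q/2} by means of a Gagliardo--Nirenberg interpolation in three dimensions, applied to the auxiliary function $v_\ep := \cep^{q/2}$. The admissible range of exponents is what forces the threshold $r < \tfrac53$ in the statement.

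Concretely, given $r \in [1,\tfrac53)$, I would first pick $q \in (\tfrac{3r}{5},1)$, which is a nonempty interval precisely because $r<\tfrac53$. Setting $v_\ep:=\cep^{q/2}$ and $s:=\tfrac{2r}{q}$, one has $s\in(2r,\tfrac{10}{3}) \subset (2,\tfrac{10}{3})$, and the Gagliardo--Nirenberg inequality for $N=3$ supplies a constant $C>0$, independent of $\ep$ and $t$, such that
\[
\lp{s}{v_\ep\cd} \le C\bigl(\lp{2}{\nabla v_\ep\cd}^{\theta}\, \lp{2}{v_\ep\cd}^{1-\theta} + \lp{2}{v_\ep\cd}\bigr), \qquad \theta := \tfrac{3(s-2)}{2s}.
\]
Raising to the $s$-th power, integrating over $(0,T)$, and using that $\lp{2}{v_\ep\cd}^2 = \io \cep^q$ is uniformly bounded by Lemma \ref{lem;L1;nepcep}, reduces the task to estimating $\int_0^T \lp{2}{\nabla v_\ep\cd}^{s\theta}\,dt$. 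Here $s\theta = \tfrac{3(s-2)}{2} < 2$ thanks to the choice $q > \tfrac{3r}{5}$, so a H\"older inequality in the time variable controls this integral by a power of $\iio |\nabla \cep^{q/2}|^2$ (times a factor depending on $T$), which is finite by Lemma \ref{lem;L2;nablac^q/2}. Since $v_\ep^s = \cep^{qs/2} = \cep^r$, this produces the desired bound $\iio \cep^r \le C_r(T)$.

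The only real obstacle is the bookkeeping of exponents: one has to verify that $s = \tfrac{2r}{q}$ can simultaneously be placed in $(2,\tfrac{10}{3})$ and come from some $q\in(0,1)$. The upper constraint $s < \tfrac{10}{3}$, which is precisely what guarantees $s\theta<2$ and hence integrability in time, rewrites as $q>\tfrac{3r}{5}$; it is compatible with $q<1$ exactly when $r<\tfrac53$. This is the step where the numerical threshold appearing in the statement of the lemma originates, and it reflects the fact that the dissipation information provided by Lemma \ref{lem;L2;nablac^q/2} only covers subcritical powers $q<1$.
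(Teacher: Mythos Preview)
Your proposal is correct and follows essentially the same route as the paper: both combine the gradient bound of Lemma \ref{lem;L2;nablac^q/2} with the mass information of Lemma \ref{lem;L1;nepcep} via a Gagliardo--Nirenberg interpolation for $v_\ep=\cep^{q/2}$. The only cosmetic difference is the choice of low-order norm: the paper interpolates against $\biglp{2/q}{\cep^{q/2}}$ (i.e.\ $\lp{1}{\cep}$), whereas you interpolate against $\lp{2}{\cep^{q/2}}$ (i.e.\ $\io \cep^q$); this leads to the slightly different admissibility condition $q>\tfrac{3r}{5}$ in place of the paper's $q>r-\tfrac23$, but both are nonempty subintervals of $(0,1)$ precisely when $r<\tfrac53$.
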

\begin{proof}
Let $T>0$ and $r\in[1,\frac{5}{3})$. We fix $q\in(\frac{1}{3},1)$ such that $r<q+\frac{2}{3}$, which in turn implies 
\begin{align}\label{relation;q;inproof;N=3}
\frac{6(r-1)}{3q-1}<2.
\end{align}
Making use of the Gagliardo--Nirenberg inequality we find $C_1>0$ such that
\begin{align*}
\iio \cep^r=\int_0^T \biglp{\frac{2r}{q}}{\cep^\frac{q}{2}}^\frac{2r}{q}\leq C_1\int_0^T\biglp{2}{\nabla\cep^\frac{q}{2}}^{\frac{2r}{q}a}\biglp{\frac2q}{\cep^\frac{q}{2}}^{\frac{2r}{q}(1-a)}+C_1\int_0^T\biglp{\frac{2}{q}}{\cep^\frac{q}{2}}^\frac{2r}{q}
\end{align*}
is satisfied with $a=\frac{\frac{q}{2}-\frac{q}{2r}}{\frac{q}{2}+\frac{1}{3}-\frac{1}{2}}=\frac{3q(r-1)}{r(3q-1)}$. In light of Lemma \ref{lem;L1;nepcep} we hence obtain $C_2>0$ such that
\begin{align*}
\iio\cep^r\leq C_2\int_0^T\biglp{2}{\nabla\cep^\frac{q}{2}}^\frac{6(r-1)}{3q-1}+C_2T.
\end{align*}
Drawing on \eqref{relation;q;inproof;N=3} and Lemma \ref{lem;L2;nablac^q/2}, a final application of Young's inequality entails the existence of $C_3>0$ satisfying
\begin{align*}
\iio\cep^r\leq (C_3+C_2)T,
\end{align*}
which concludes the proof.
\end{proof}

For both cases $N=2$ and $N=3$ the information on $\cep$ contained in Lemma \ref{lem;Lresti;cep;N=2} and Lemma \ref{lem;Lresti;cep;N=3}, respectively, suffice to ensure an additional spatio-temporal bound on $\nabla \cep$.

\begin{lem}\label{lem;Lr;nablac}
For all $T>0$ and all $r\in [1,\frac 54)$ there is $C(T)>0$ such that 
\[
  \iio |\nabla \cep|^r \le C(T). 
\]
holds for all $\ep\in (0,1)$.  
\end{lem}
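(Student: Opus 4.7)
The plan is to obtain the bound on $\iio |\nabla \cep|^r$ by combining the spatio-temporal control on $\nabla \cep^{q/2}$ from Lemma \ref{lem;L2;nablac^q/2} with the integrability of $\cep$ coming from either Lemma \ref{lem;Lresti;cep;N=2} ($N=2$) or Lemma \ref{lem;Lresti;cep;N=3} ($N=3$), using a Hölder interpolation in the spirit of weighted rewriting.

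First I would fix $r\in[1,\tfrac54)$ and pick $q\in(0,1)$, to be chosen later, with the intention of writing the pointwise identity
\[
|\nabla \cep|^r \;=\; \cep^{\frac{(2-q)r}{2}}\cdot\bigl(\cep^{q-2}|\nabla \cep|^2\bigr)^{\frac r2}.
\]
Since $r<2$, an application of Hölder's inequality on $\Omega\times(0,T)$ with exponents $\frac{2}{2-r}$ and $\frac{2}{r}$ would give
\[
\iio|\nabla\cep|^r \;\le\; \left(\iio \cep^{\frac{(2-q)r}{2-r}}\right)^{\frac{2-r}{2}} \left(\iio \cep^{q-2}|\nabla\cep|^2\right)^{\frac r2}.
\]
The second factor equals $(2/q)^r$ times a power of $\iio|\nabla\cep^{q/2}|^2$, so it is controlled by $C(T)$ through Lemma \ref{lem;L2;nablac^q/2} for any admissible $q$.

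The decisive step is the choice of $q$ that makes the first factor finite. For $N=3$, Lemma \ref{lem;Lresti;cep;N=3} allows us to bound $\iio \cep^s$ for any $s<\tfrac53$, so I need
\[
\frac{(2-q)r}{2-r} < \frac{5}{3}.
\]
A short calculation shows that this amounts to $q>\frac{11r-10}{3r}$, and because $r<\tfrac54$ the right-hand side is strictly less than $1$; hence there exists an admissible $q\in(0,1)$ for every $r\in[1,\tfrac54)$, and the claimed bound follows. For $N=2$, Lemma \ref{lem;Lresti;cep;N=2} provides uniform-in-time bounds on $\|\cep(\cdot,t)\|_{L^s}$ for every $s<\infty$, so the first factor is controlled by $T$ times a constant regardless of the choice of $q\in(0,1)$; in this case one may simply fix, say, $q=\tfrac12$ and conclude.

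I do not anticipate any serious obstacle in this argument; the main (very minor) technical point is just to verify that the constraint $q\in(0,1)$ in Lemma \ref{lem;L2;nablac^q/2} is compatible with the lower bound on $q$ forced in dimension three, which the inequality $r<\tfrac54$ is tailored to guarantee. In particular, the threshold $\tfrac54$ appears as $\tfrac54 = \tfrac{s+1}{2}\cdot\tfrac{2s}{s+\_}$-type interpolation between $s=\tfrac53$ and $s=1$, consistent with the observation that the regularity one gains on $\nabla\cep$ is exactly halfway between the $L^2_t(L^2_x)$-bound on $\nabla\cep^{q/2}$ and the $L^{5/3}_{t,x}$-bound on $\cep$.
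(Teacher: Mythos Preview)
Your proposal is correct and follows essentially the same route as the paper: the same weighted decomposition $|\nabla\cep|^r = \cep^{\frac{r(2-q)}{2}}\cdot\bigl(\cep^{q-2}|\nabla\cep|^2\bigr)^{r/2}$, the same exponents $\tfrac{2}{r}$ and $\tfrac{2}{2-r}$, and the same choice of $q\in(0,1)$ governed by $\frac{(2-q)r}{2-r}<\tfrac53$. The only cosmetic difference is that the paper applies Young's inequality (obtaining a sum of the two integrals) whereas you apply H\"older (obtaining their product); both yield the claim immediately from Lemmata \ref{lem;Lresti;cep;N=2}, \ref{lem;L2;nablac^q/2} and \ref{lem;Lresti;cep;N=3}.
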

\begin{proof}
Let $T>0$ and let $r\in [1,\frac 54)$. 
Since the relation $\frac r{2-r} < \frac 53$ holds because of the inequality $r< \frac{5}{4}$, 
we can find $q\in (0,1)$ such that 
\[
  \frac{r(2-q)}{2-r} < \frac 53. 
\]
Then Lemmata \ref{lem;Lresti;cep;N=2}, \ref{lem;L2;nablac^q/2} and \ref{lem;Lresti;cep;N=3} 
show that there is $C_1(T)>0$ such that 
\begin{align}\label{ineq;cepnaep;nacp}
  \iio \cep^{\frac{r(2-q)}{2-r}} \le C_1(T) 
\quad 
  \mbox{and} 
\quad 
  \iio \cep^{q-2} |\nabla \cep|^2 = \frac4{q^2}\iio |\na \cep^\frac q2|^2 \le C_1(T) 
\end{align}
is valid for all $\ep\in(0,1)$. 
Now, by virtue of the Young inequality with exponents $\f2r$ and $\f2{2-r}$, 
we obtain that for any $\ep\in(0,1)$,
\begin{align}\label{ineq;nacpr} 
  \iio |\na \cep|^r  &= \iio \cep^{\frac{r(q-2)}{2}}|\na \cep|^r \cdot \cep^{\frac{r(2-q)}{2}} 
\le 
  \iio \cep^{q-2} |\na \cep|^2 + \iio \cep^{\frac{r(2-q)}{2-r}}. 
\end{align}
Thus, combination of \eqref{ineq;cepnaep;nacp} and \eqref{ineq;nacpr} 
ensures that this lemma holds. 
\end{proof}

\section{Key relation for existence of generalized solutions}\label{sec4}
The supersolution property of Definition \ref{def;weak(p,q)supersol} is based on the inequality \eqref{ineq;def;weak(p,q)}. Here we show that the approximate solutions satisfy a similar relation, on which we will base the proof of \eqref{ineq;def;weak(p,q)} for the solution we are constructing, but also, prior to that and with $\varphi\equiv 1$, some further a priori estimates. This section closely follows \cite[Section 4]{lankeit_winkler}.
%
%
%
%

\begin{lem}\label{lem;intnpc-r}
Let $T>0$ and $p,q\in (0,1)$. Then 
\begin{align*}
  &- \iio \nep^p \cepq \varphi_t + \io \nepp (\cdot,T)\cepq (\cdot,T) \varphi (\cdot,T) - 
 \io n_0^p c_0^q \varphi (\cdot,0) 
 \\
 &= \iio \frac{4(1-p)q-4q^2 -p\frac{(1-p)^2\chi^2}{(1+\ep\nep)^2}}{pq(\frac{p\chi}{1+\ep\nep}+1-q)}
       \cepq|\nabla \nep^{\frac{p}{2}}|^2\varphi 
\\ 
   &\quad\, + \iio \frac{4}{q}\Big(\frac{p\chi}{1+\ep\nep}+ 1-q \Big) 
                \Big|\nep^{\frac{p}{2}}\nabla \cep^{\frac{q}{2}} 
                - \frac{\frac{(1-p)\chi}{1+\ep\nep}+2q}{2(\frac{p\chi}{1+\ep\nep}+1-q)}
                \cep^{\frac{q}{2}}\nabla \nep^{\frac{p}{2}} \Big|^2\varphi
\\
  &\quad\, + \iio \frac{2((1-p)\ep\nep-p)}{q(1+\ep\nep)^2}
     \nep^\frac{p}{2}\cepq\nabla \nep^{\frac{p}{2}}\cdot \nabla \varphi 
  + \iio \Big( 1-\frac{p\chi}{q(1+\ep\nep)}\Big)\nepp\cepq\Delta \varphi 
\\
  & \quad \, 
  - q \iio \nepp\cepq \varphi + q\iio \nep^{p+1}\cep^{q-1} \varphi 
  + \iio \nepp \cepq \uep\cdot \nabla \varphi
\end{align*}
holds for all $\ep\in(0,1)$ and all $\varphi \in C^\infty(\ol{\Omega}\times [0,T])$ with 
$\frac{\pa \varphi}{\pa \nu}=0$ on $\pa \Omega\times (0,T)$. 
\end{lem}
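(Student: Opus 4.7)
The plan is to differentiate $t\mapsto\io \nep^p(\cdot,t)\cep^q(\cdot,t)\varphi(\cdot,t)$ in time, substitute the approximate equations for $\nept$ and $\cept$, integrate by parts in space to transfer derivatives onto $\varphi$, and finally reorganise the bulk gradient terms via completion of the square. Starting from $\pa_t(\nep^p\cep^q)=p\nep^{p-1}\cep^q\nept+q\nep^p\cep^{q-1}\cept$, multiplying by $\varphi$ and integrating over $\Omega\times(0,T)$ produces, after passing the time derivative on $\varphi$ to the opposite side, precisely the left hand side of the claim.

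Substituting \eqref{apcp1} and \eqref{apcp2} separates the resulting right hand side into reaction, transport, diffusion and chemotaxis contributions. The reaction part of \eqref{apcp2} directly yields $-q\iio\nep^p\cep^q\varphi+q\iio\nep^{p+1}\cep^{q-1}\varphi$. The two convective integrals combine, by means of $\nabla\cdot\uep=0$ in $\Omega$ and $\uep|_{\pa\Omega}=0$, into $+\iio\nep^p\cep^q\uep\cdot\nabla\varphi$, matching the last summand on the right. For the two diffusive integrals $p\iio\nep^{p-1}\cep^q\Delta\nep\,\varphi$ and $q\iio\nep^p\cep^{q-1}\Delta\cep\,\varphi$, as well as for the chemotaxis term, I would perform a single integration by parts whose boundary terms vanish thanks to the no-flux conditions $\pa_\nu\nep=\pa_\nu\cep=0$; this produces three types of bulk terms, namely integrals of $|\nabla\nep|^2$, $|\nabla\cep|^2$ and of the cross product $\nabla\nep\cdot\nabla\cep$ weighted by suitable powers of $\nep$ and $\cep$, together with residuals paired with $\nabla\varphi$. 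Rewriting these three quadratic integrals through the identities $\nep^{p-2}|\nabla\nep|^2=\f{4}{p^2}|\nabla\nep^{p/2}|^2$, $\cep^{q-2}|\nabla\cep|^2=\f{4}{q^2}|\nabla\cep^{q/2}|^2$ and $\nep^{p-1}\cep^{q-1}\nabla\nep\cdot\nabla\cep=\f{4}{pq}\nep^{p/2}\cep^{q/2}\nabla\nep^{p/2}\cdot\nabla\cep^{q/2}$, and collecting every contribution to their coefficients (each of which inherits a piece of $\f{p\chi}{1+\ep\nep}$ from the chemotaxis term), I would complete the square relative to $\nep^{p/2}\nabla\cep^{q/2}$ and $\cep^{q/2}\nabla\nep^{p/2}$, so that the first two summands of the right hand side emerge with their displayed prefactors; the coefficient in front of $\cep^q|\nabla\nep^{p/2}|^2$ is, up to the common denominator, the discriminant of the underlying quadratic form.

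What remains is the collection of all $\nabla\varphi$-terms. Using $\cep^{q-1}\nabla\cep=q^{-1}\nabla\cep^q$, they can be grouped into a single integral proportional to $\nep^p\nabla\cep^q\cdot\nabla\varphi$ with coefficient $\bigl(\f{p\chi}{q(1+\ep\nep)}-1\bigr)$; a second integration by parts -- again with vanishing boundary contribution, now thanks to $\pa_\nu\varphi=0$ -- then detaches the $\Delta\varphi$-integral with the claimed coefficient $1-\f{p\chi}{q(1+\ep\nep)}$, while the accompanying gradient residual, after combination with the $\nabla\varphi$-term left over from the Laplacian of $\nep$ and explicit evaluation of $\nabla\bigl(\nep^p/(1+\ep\nep)\bigr)=\f{p\nep^{p-1}+(p-1)\ep\nep^p}{(1+\ep\nep)^2}\nabla\nep$, collapses to the displayed $\nabla\nep^{p/2}\cdot\nabla\varphi$-integral. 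The main obstacle is the sheer amount of bookkeeping: each of the three squared-gradient integrals and the $\nabla\varphi$-residual receives contributions from several places, and tracking them all to recover the exact prefactors requires careful algebra in the completion of the square. A subsidiary technicality is the formal singularity of $\nep^{p-1}$ at $\{\nep=0\}$ for $p<1$; this can be circumvented by first carrying out the computation with $\nep+\delta$ in place of $\nep$ and passing $\delta\downarrow 0$ at the end, the dominated-convergence-type passage being legitimate in view of the regularity of $\nep$, $\cep$, $\uep$ granted by Lemma \ref{lem;local existence}.
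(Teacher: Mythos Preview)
Your proposal is correct and follows essentially the same route as the paper: differentiate $\io \nep^p\cep^q\varphi$ in time, substitute the equations, integrate by parts, and complete the square, with the convective contributions combining via $\nabla\cdot\uep=0$ into the single $\uep\cdot\nabla\varphi$ integral. The paper in fact refers to \cite[Lemma~4.1]{lankeit_winkler} for the lengthy fluid-free computation that you sketch and only writes out the additional transport-term calculation; your mention of the $\nep+\delta$ regularisation to handle the formal singularity of $\nep^{p-1}$ is likewise in the spirit of that reference.
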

\begin{proof}
Let $p\in (0,1)$ and $q\in (0,1)$. We then have to compute $\frac d{dt} \io \nepp\cepq\varphi$. These calculations are based on integration by parts and re-ordering of the terms and are straightforward, but rather long. We refer to \cite[Lemma 4.1]{lankeit_winkler} for a detailled proof. The only difference to the fluid-free case treated there is given by the terms 
\begin{align*}
 p \io \nep^{p-1} \cepq \varphi \nabla \nep \cdot \uep 
 + q \io \nepp \cep^{q-1} \varphi \nabla \cep \cdot \uep 
 = \io \varphi \nabla (\nepp\cepq) \cdot  \uep 
 = \iio \nepp \cepq \uep\cdot \nabla \varphi,
\end{align*}
where we have used that $\nabla \cdot \uep =0$.
\end{proof}
The following lemma, which tells us that one of the coefficients in Lemma \ref{lem;intnpc-r} 
becomes positive, 
helps to turn the latter into some $\ep$-independent estimates. 
\begin{lem}\label{lem;1step}
Let $\chi > 0$, $p\in (0,1)$ be such that $p<\frac{1}{\chi^2}$ and let
\begin{align}\label{def;q+-}
  q_{\pm}(p) := \frac{1-p}{2}(1\pm \sqrt{1-p\chi^2})\in(0,1). 
\end{align}
Then for any choice of $q\in (q_-(p),q_+(p))$ there is $C>0$ such that 
\[
  \frac{4(1-p)q-4q^2 -p\frac{(1-p)^2\chi^2}{(1+\ep\nep)^2}}{pq(\frac{p\chi}{1+\ep\nep}+1-q)}
\ge C
\]
holds for all $\ep\in (0,1)$.
\end{lem}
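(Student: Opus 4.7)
The plan is to absorb the $\varepsilon$-dependence into the single variable $s := \frac{1}{1+\varepsilon \nep(x,t)} \in (0,1]$ (using $\nep\ge 0$), which reduces the claim to a uniform lower bound in $s$ alone for
\[
F(s) \;:=\; \frac{N(s)}{D(s)}, \qquad N(s):=4(1-p)q-4q^2-p(1-p)^2\chi^2 s^2, \qquad D(s):=pq\!\left(p\chi s + 1-q\right).
\]
Since $p,q,\chi$ are fixed, it suffices to produce a constant $C>0$, depending only on these parameters, so that $F(s)\ge C$ for every $s\in(0,1]$; this will be uniform in $\varepsilon\in(0,1)$ and in $(x,t)$.

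I would treat the numerator and denominator separately by simple monotonicity in $s$. For the denominator, $D$ is linear and strictly increasing in $s$, hence $D(s)\le D(1)=pq(p\chi+1-q)$, and this upper bound is positive because $p,q\in(0,1)$. For the numerator, $N'(s)=-2p(1-p)^2\chi^2 s\le 0$, so $N$ attains its minimum on $(0,1]$ at $s=1$, giving
\[
N(s)\;\ge\;N(1)\;=\;4(1-p)q-4q^2-p(1-p)^2\chi^2.
\]
The key algebraic observation is that $N(1)$, regarded as a downward-opening quadratic in $q$, has discriminant $16(1-p)^2(1-p\chi^2)$, which is nonnegative precisely because of the hypothesis $p<\tfrac{1}{\chi^2}$; the quadratic formula then gives its two roots as $\tfrac{1-p}{2}\bigl(1\pm\sqrt{1-p\chi^2}\bigr)=q_{\pm}(p)$. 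Hence $N(1)>0$ whenever $q\in(q_-(p),q_+(p))$, and the choice
\[
C\;:=\;\frac{4(1-p)q-4q^2-p(1-p)^2\chi^2}{pq(p\chi+1-q)}\;>\;0
\]
depends only on $p,q,\chi$ and delivers the conclusion.

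The argument is entirely algebraic, so I do not foresee a genuine obstacle beyond two sanity checks that should be mentioned in passing: first, that $q_\pm(p)\in(0,1)$, so the admissible interval for $q$ is nonempty (which follows from $0<\sqrt{1-p\chi^2}<1$ and $0<1-p<1$); and second, that the denominator $D(s)$ is in fact strictly positive on $(0,1]$, which is automatic from $q<1$ and $p\chi s\ge 0$. Everything else is simply recognizing that the worst case $s=1$ for the fraction matches the $\varepsilon\to 0$ regime, which is exactly the clean limit in which $q_\pm(p)$ arise as roots of the quadratic in $q$.
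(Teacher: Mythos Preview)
Your proof is correct and is exactly the natural elementary argument; the paper itself does not spell out a proof but merely cites \cite[Lemma 4.2]{lankeit_winkler}, calling it ``elementary,'' and your substitution $s=\tfrac{1}{1+\varepsilon n_\varepsilon}$ together with the monotonicity of numerator and denominator in $s$ and the identification of $q_\pm(p)$ as the roots of the quadratic $N(1)$ is precisely the kind of computation that label presumes.
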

\begin{proof}
The elementary proof can be found in \cite[Lemma 4.2]{lankeit_winkler}. 
\end{proof}
\begin{lem}\label{lem;estifromIneq} 
Let $\chi>0$, $p\in (0,1)$ be such that $p<\frac{1}{\chi^2}$, and let $q\in (q_-(p),q_+(p))$, 
with $q_\pm(p)$ as defined in \eqref{def;q+-}.  
Then for each $T>0$ there exists $C(p,q,T)>0$ fulfiling 
\begin{align}\label{eq:4.6}
  \iio \cepq |\nabla \nep^\frac{p}{2}|^2 \le C(p,q,T),
\end{align}
and 
\begin{align*}
\iio |\nabla \nep^{\frac p2}|^2 \le C(p,q,T)
\end{align*}
as well as 
\begin{align*}
\iio \frac{4}{q}\left(\frac{p\chi}{1+\ep\nep}+ 1-q \right) 
                \left|\nep^{\frac{p}{2}}\nabla \cep^{\frac{q}{2}} 
                - \frac{\frac{(1-p)\chi}{1+\ep\nep}+2q}{2(\frac{p\chi}{1+\ep\nep}+1-q)}
                \cep^{\frac{q}{2}}\nabla \nep^{\frac{p}{2}} \right|^2 \le C(p,q,T)
\end{align*}
and 
\begin{align}\label{eq:np+1cq-1}
\iio \nep^{p+1} \cep^{q-1} \le C(p,q,T) 
\end{align}
for all $\ep\in (0,1)$. 
\end{lem}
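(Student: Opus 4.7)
The plan is to test the identity of Lemma \ref{lem;intnpc-r} against the constant function $\varphi\equiv 1$. Then all contributions involving $\varphi_t$, $\nabla\varphi$, $\Delta\varphi$ and $\uep\cdot\nabla\varphi$ vanish, and what remains reduces to
$$\io \nep^p(\cdot,T)\cep^q(\cdot,T)-\io n_0^pc_0^q \;=\; J_1 + J_2 - q\iio\nepp\cepq + q\iio\nep^{p+1}\cep^{q-1},$$
where $J_1$ denotes the first gradient-square integral appearing in Lemma \ref{lem;intnpc-r} and $J_2$ the second.

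Both $J_1$ and $J_2$ are nonnegative: $J_2$ since its prefactor satisfies $\frac{4}{q}\bigl(\frac{p\chi}{1+\ep\nep}+1-q\bigr)\ge\frac{4(1-q)}{q}>0$, while Lemma \ref{lem;1step} supplies an $\ep$-uniform constant $C>0$ with $J_1 \ge C\iio\cepq|\nabla\nep^{p/2}|^2$. Rearranging the identity, I obtain
$$C\iio\cepq|\nabla\nep^{p/2}|^2 + J_2 + q\iio\nep^{p+1}\cep^{q-1} \;\le\; \io \nep^p(\cdot,T)\cep^q(\cdot,T) + q\iio\nepp\cepq + \io n_0^pc_0^q,$$
so everything reduces to bounding the right-hand side independently of $\ep$.

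The critical observation is that $p\chi^2>0$ forces $\sqrt{1-p\chi^2}<1$, whence the formula \eqref{def;q+-} yields $q<q_+(p)<1-p$ and in particular $p+q<1$. A triple H\"older inequality with exponents $\tfrac1p,\tfrac1q,\tfrac{1}{1-p-q}$ then bounds
$$\io\nepp\cepq \;\le\; \Bigl(\io\nep\Bigr)^{p}\Bigl(\io\cep\Bigr)^{q} |\Omega|^{1-p-q},$$
and the two mass estimates of Lemma \ref{lem;L1;nepcep} render this quantity $\ep$-uniformly bounded on $(0,T)$. This simultaneously controls $\io \nep^p(\cdot,T)\cep^q(\cdot,T)$ and $\iio\nepp\cepq\le CT$, yielding \eqref{eq:4.6}, the bound on $J_2$, and \eqref{eq:np+1cq-1}. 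The unweighted estimate $\iio|\nabla\nep^{p/2}|^2\le C(p,q,T)$ follows by stripping the weight $\cep^q$ from \eqref{eq:4.6} using the $\ep$-independent pointwise lower bound $\cep\ge (\inf_\Omega c_0)e^{-T}$ on $(0,T)$ provided by \eqref{ineq;lower;c}. The only non-routine point is the inequality $p+q<1$ that makes the H\"older step work; everything else is essentially bookkeeping within the identity of Lemma \ref{lem;intnpc-r}.
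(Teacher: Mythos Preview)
Your proof is correct and follows essentially the same route as the paper: take $\varphi\equiv 1$ in Lemma~\ref{lem;intnpc-r}, invoke Lemma~\ref{lem;1step} for the positivity of the first coefficient, bound $\io\nepp\cepq$ via the mass estimates of Lemma~\ref{lem;L1;nepcep}, and strip the weight $\cepq$ using \eqref{ineq;lower;c}. The only cosmetic difference is that the paper bounds $\io\nepp\cepq$ by two applications of Young's inequality (first $\nepp\cepq\le p\nep+(1-p)\cep^{q/(1-p)}$, then $\cep^{q/(1-p)}\le \tfrac{q}{1-p}\cep+\tfrac{1-p-q}{1-p}$), whereas you use a single three-term H\"older step; both rest on exactly the same observation $q<q_+(p)<1-p$. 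A tiny sign slip: in your displayed rearrangement the term $\io n_0^pc_0^q$ should appear with a minus sign on the right (or equivalently be kept on the left), but since it is a fixed nonnegative constant this only weakens the inequality and does no harm.
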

\begin{proof}
We assume that $T>0$, let $p\in (0,1)$ satisfy $p<\frac{1}{\chi^2}$, 
and let $q\in (q_-(p),q_+(p))$. 
Then Lemma \ref{lem;intnpc-r} with $\varphi \equiv 1$ 
together with Lemma \ref{lem;1step} 
provides $C_1>0$ such that 
\begin{align}\notag 
 &C_1 \iio \cepq|\nabla \nep^{\frac{p}{2}}|^2  
+ \iio \frac{4}{q}\Big(\frac{p\chi}{1+\ep\nep}+ 1-q \Big) 
                \left|\nep^{\frac{p}{2}}\nabla \cep^{\frac{q}{2}} 
                - \frac{\frac{(1-p)\chi}{1+\ep\nep}+2q}{2(\frac{p\chi}{1+\ep\nep}+1-q)}
                \cep^{\frac{q}{2}}\nabla \nep^{\frac{p}{2}} \right|^2 
\\ \notag  
& \quad \, 
  + q\iio \nep^{p+1}\cep^{q-1} + \io n_0^p c_0^q 
\\ \label{ineq;estifromIneq;step1}
    &\le \io \nepp (\cdot,T)\cepq (\cdot,T)  + q \iio \nepp\cepq 
\end{align}
holds for all $\ep\in (0,1)$. Noting that $\frac q{1-p} < \frac {q_+(p)}{1-p} <1$, 
from Young's inequality we obtain that 
\begin{align}\label{ineq;estifromIneq;step2}
  \io \nep^p\cep^q \le p \io \nep + (1-p) \io \cep^{\frac{q}{1-p}} 
\le p \io \nep + q \io \cep + \frac{1-p-q}{1-p} \quad \text{on } (0,T)
\end{align}
for all $\ep\in(0,1)$. Thus, a combination of \eqref{ineq;estifromIneq;step1} and \eqref{ineq;estifromIneq;step2} with Lemma \ref{lem;L1;nepcep} provides $C_2(p,q,T)>0$ satisfying  
\begin{align}\label{esti;cepnablanep}
  \iio \cepq |\nabla \nep^\frac{p}{2}|^2 \le C_2(p,q,T),
\end{align}
and 
\begin{align*}
\iio \frac{4}{q}\Big(\frac{p\chi}{1+\ep\nep}+ 1-q \Big) 
                \left|\nep^{\frac{p}{2}}\nabla \cep^{\frac{q}{2}} 
                - \frac{\frac{(1-p)\chi}{1+\ep\nep}+2q}{2(\frac{p\chi}{1+\ep\nep}+1-q)}
                \cep^{\frac{q}{2}}\nabla \nep^{\frac{p}{2}} \right|^2 \le C_2(p,q,T),
\end{align*}
as well as  
\begin{align*}
\iio \nep^{p+1} \cep^{q-1} \le C_2(p,q,T)
\end{align*}
for all $\ep\in (0,1)$.  
Moreover, aided by the lower bound for $\cep$ contained in \eqref{ineq;lower;c}, we infer that 
\begin{align*}
\iio |\nabla \nep^{\frac p2}|^2 \le \frac{C_2(p,q,T) e^{qT}}{(\inf c_0)^q}
\end{align*}
is valid for all $\ep \in (0,1)$. 
\end{proof}
\section{Further uniform-in-$\ep$ estimates}
In this section we consider additional uniform-in-$\ep$ estimates, 
which are required to obtain some of the convergence properties listed in Lemma \ref{lem;conv}. In particular, the first of these estimates will be responsible for the regularity of the first solution component and exclusion of persistent Dirac type singularities.
We first recall the following lemma which enables us to pick suitable constants 
$p\in (0,1)$ and $q\in (0,1)$ in the proof of Lemma \ref{lem;Lr(Lr;n)}. 
\begin{lem}\label{lem;infof(1-q)/p}
Let $\chi>0$ and $q_\pm (p)$ be as defined in \eqref{def;q+-}. 
Then 
\[
\inf\Big\{ \frac{1-q}{p} \, \Big|  \, 0<p<\min\big\{ 1,\tfrac 1{\chi^2}\big\} \ \mbox{and} \ q\in \big(q_-(p),q_+(p)\big)  \Big\} 
  = \begin{cases}
    1 & \text{if }\chi \le 1, 
  \\
    \chi & \text{if }\chi\in (1,2), 
  \\
    1+\frac {\chi^2}{4} & \text{if }\chi\geq2. 
  \end{cases}  
\]
\end{lem}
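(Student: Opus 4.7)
The plan is to reduce the two-variable minimization to a one-variable one, and then to appeal to the AM--GM inequality.

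First, since $\f{1-q}{p}$ is strictly decreasing in $q$, for each admissible $p$ the infimum over $q\in(q_-(p),q_+(p))$ equals $g(p) := \f{1-q_+(p)}{p}$. Thus it suffices to minimize $g$ over $p \in \left(0,\min\{1,\tfrac 1{\chi^2}\}\right)$.

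Next, I would simplify $g$ by a conjugate multiplication. Writing $s(p) := \sqrt{1-p\chi^2}$, one has
\[
1-q_+(p) = \f{1-s}{2} + \f{p(1+s)}{2}, \qquad \f{1-s}{2p} = \f{\chi^2}{2(1+s)},
\]
so that, setting $y := 1 + s(p)$, one obtains the compact form
\[
g(p) = \f{1}{2}\left(\f{\chi^2}{y} + y\right).
\]
As $p$ sweeps the admissible interval, $y$ traverses $\left(1+\sqrt{1-\chi^2},\,2\right)$ if $\chi\leq 1$, and $(1,2)$ if $\chi>1$; the correspondence $p\mapsto y$ is a diffeomorphism onto these intervals, with $p\to 0$ corresponding to $y\to 2$ and with $p$ approaching the right endpoint of its domain corresponding to $y$ approaching the left endpoint of its interval.

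Finally, AM--GM gives $\f{\chi^2}{y} + y \geq 2\chi$ with equality iff $y = \chi$, and the map $y\mapsto \f{\chi^2}{y}+y$ is decreasing on $(0,\chi)$ and increasing on $(\chi,\infty)$. The proof then splits according to where $y=\chi$ sits relative to the feasible $y$-interval: if $\chi\leq 1$, then $\chi\leq 1+\sqrt{1-\chi^2}$, so the map is increasing on the interval and the infimum is attained in the limit $y\to 1+\sqrt{1-\chi^2}$ (i.e.\ $p\to 1$), which a direct computation shows equals $1$; if $\chi\in(1,2)$, then $y=\chi$ lies interior to $(1,2)$ and the infimum is $\chi$, realised at $p = \f{2-\chi}{\chi}$; if $\chi\geq 2$, then $\chi\geq 2$ lies at or beyond the right endpoint, so the map is decreasing on $(1,2)$ and the infimum is approached as $y\to 2$ (i.e.\ $p\to 0$), giving $1+\f{\chi^2}{4}$.

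The only real work is the algebraic simplification into the $\f{\chi^2}{y}+y$ form; once this is achieved, the three cases follow immediately from the unconstrained minimizer of $h(y)=\chi^2/y+y$.
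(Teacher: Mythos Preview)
Your proof is correct. The algebraic substitution $y=1+\sqrt{1-p\chi^2}$ cleanly reduces the problem to minimizing $\tfrac12(\chi^2/y+y)$ on an explicit interval, after which the three cases follow from the monotonicity of $y\mapsto \chi^2/y+y$ on either side of its AM--GM minimum $y=\chi$; all endpoint evaluations check out (in particular $h(1+\sqrt{1-\chi^2})=2$ via conjugation, and $h(2)=2+\chi^2/2$).

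The paper itself does not give a proof here but simply cites \cite[Lemma~5.1]{lankeit_winkler}. Your argument is thus a self-contained alternative to that citation; the change of variable $p\mapsto y$ and the resulting AM--GM structure make the case distinction transparent and avoid any explicit calculus on $g(p)$.
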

\begin{proof}
A proof is given in \cite[Lemma 5.1]{lankeit_winkler}. 
\end{proof}
%
The following proof for $L^r$-regularity of $\nep$ for some $r>1$ resembles the proof of \cite[Lemma 5.2]{lankeit_winkler}; however, for the $3$-dimensional case, we have to use a quite different source for estimates for $\cep$.
\begin{lem}\label{lem;Lr(Lr;n)}
Assume that $\chi>0$ satisfies \eqref{condi;chi;generalized}. 
Then there exists $r>1$ such that for each $T>0$ 
one can find $C(T)>0$ such that 
\begin{align*}
\iio \nep^r \le C(T) 
\end{align*}
holds for all $\ep\in (0,1)$. 
\end{lem}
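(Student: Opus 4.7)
The plan is to combine the uniform bound $\iio \nep^{p+1}\cep^{q-1} \le C(p,q,T)$ from \eqref{eq:np+1cq-1} with the space-time $L^s$-information on $\cep$ derived in Lemmata \ref{lem;Lresti;cep;N=2} and \ref{lem;Lresti;cep;N=3}, via a single H\"older interpolation. More precisely, for suitable $p\in(0,\min\{1,1/\chi^2\})$, $q\in(q_-(p),q_+(p))$ and $r\in(1,p+1)$ I would split
\[
\nep^r = \kl{\nep^{p+1}\cep^{q-1}}^{\frac{r}{p+1}} \cdot \cep^{\frac{r(1-q)}{p+1}}
\]
and apply H\"older's inequality on $\Omega\times(0,T)$ with conjugate exponents $\frac{p+1}{r}$ and $\frac{p+1}{p+1-r}$, which yields
\[
\iio \nep^r \le \kl{\iio \nep^{p+1}\cep^{q-1}}^{\frac{r}{p+1}} \kl{\iio \cep^{\frac{r(1-q)}{p+1-r}}}^{\frac{p+1-r}{p+1}}.
\]
The first factor is already controlled by Lemma \ref{lem;estifromIneq}, so the conclusion reduces to securing a uniform-in-$\eps$ bound on $\iio \cep^{s}$ with $s := \frac{r(1-q)}{p+1-r}$.

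For $N=2$, Lemma \ref{lem;Lresti;cep;N=2} provides such a bound for every finite $s$, so any $\chi>0$ is admissible and I can take, say, $r=1+\tfrac{p}{2}$. For $N=3$, Lemma \ref{lem;Lresti;cep;N=3} demands $s<\tfrac 53$, which together with $r>1$ is readily seen to be equivalent to the algebraic constraint $\tfrac{1-q}{p}<\tfrac 53$. Here Lemma \ref{lem;infof(1-q)/p} enters decisively: it identifies the infimum of $\tfrac{1-q}{p}$ over admissible $(p,q)$ as $\max\{1,\chi\}$ when $\chi<2$, and as $1+\tfrac{\chi^2}{4}\ge \tfrac 53$ when $\chi\ge 2$. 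Consequently the hypothesis $\chi<\tfrac 53$ from \eqref{condi;chi;generalized} is exactly what is needed to force this infimum strictly below $\tfrac 53$; for any $(p,q)$ realizing $\tfrac{1-q}{p}<\tfrac 53$ the resulting interval $\left(1,\tfrac{5(p+1)}{8-3q}\right)$, which is automatically contained in $(1,p+1)$ since $q<1$, is non-empty and any $r$ therein does the job.

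The genuinely delicate point is therefore not the H\"older computation itself but the simultaneous enforcement of $p<1/\chi^2$ (required by Lemma \ref{lem;1step}), $q\in(q_-(p),q_+(p))$ (required for Lemma \ref{lem;estifromIneq}) and $\tfrac{1-q}{p}<\tfrac 53$ (required for the $\cep$-bound in $N=3$). That such a triple exists precisely under the sharp condition $\chi<\tfrac 53$ is the content of Lemma \ref{lem;infof(1-q)/p} and explains the threshold $\tfrac 53$ in \eqref{condi;chi;generalized}; it is strictly smaller than what is available in the fluid-free setting of \cite{lankeit_winkler} because here one is forced to work with the space-time bound $\iio\cep^s$ for $s<\tfrac 53$ rather than with pointwise-in-time $L^s$-estimates on $\cep$.
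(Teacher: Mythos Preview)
Your argument is correct and follows essentially the same route as the paper: both isolate the factor $\nep^{p+1}\cep^{q-1}$ controlled by \eqref{eq:np+1cq-1} and are left with a bound on $\iio\cep^{\frac{(1-q)r}{p+1-r}}$, handled by Lemma~\ref{lem;Lresti;cep;N=2} for $N=2$ and by Lemma~\ref{lem;Lresti;cep;N=3} (forcing $\frac{1-q}{p}<\frac53$ via Lemma~\ref{lem;infof(1-q)/p}) for $N=3$. The only cosmetic difference is that the paper splits via Young's inequality whereas you use H\"older; the resulting exponent on $\cep$ is identical, so the two arguments are interchangeable.
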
 
\begin{proof}
In the case $N=2$ we pick arbitrary $p\in(0,\min\{1,\tfrac{1}{\chi^2}\})$ and $q\in\big(q_-(p),q_+(p)\big)$, with $q_\pm(p)$ as defined in \eqref{def;q+-}, and can see from Young's inequality that  
for all $r\in [1,p+1)$ and all $\ep\in(0,1)$ the inequality 
\begin{align}\label{ineq;Young;nr}
  \iio \nep^r =\iio n^r c^{\f{q-1}{p+1}r} c^{\f{1-q}{p+1}r} \le 
    \iio \nep^{p+1}\cep^{q-1} 
    + \iio \cep^{\frac{(1-q)r}{p+1-r}} 
\end{align}
holds. 
Then, we make use of Lemmata \ref{lem;estifromIneq} and \ref{lem;Lresti;cep;N=2} 
to derive from \eqref{ineq;Young;nr} 
that there exists $C_1(T)>0$ such that  
\[
  \iio \nep^r \le C_1(T) 
\]
holds for all $r\in [1,p+1)$ and all $\ep\in(0,1)$. \\

On the other hand, in the case $N=3$ 
we need to choose $p\in (1,\min\{1,\frac{1}{\chi^2}\})$, 
$q \in (q_-(p),q_+(p))$ such that 
for each $T>0$ there is $C_2(T)>0$ satisfying 
\begin{align}\label{ineq;L1-qr..;nr}
  \iio \cep^{\frac{(1-q)r}{p+1-r}} \le C_2(T),
\end{align}
for all $\ep\in(0,1)$ with some $r>1$. Due to $\chi<\frac 53$, we can rely on Lemma \ref{lem;infof(1-q)/p}
to find $p\in (1,\min\{1,\frac{1}{\chi^2}\})$ and $q \in (q_-(p),q_+(p))$ 
such that $\frac{1-q}{p}=\max\{1,\chi\}<\frac{5}{3}$. 
Hence, choosing $r>1$ sufficiently close to $1$ we can ensure that 
\[
  \frac{(1-q)r}{p+1-r} <\frac{5}{3}. 
\]
Plugging this into Lemma \ref{lem;Lresti;cep;N=3} yields \eqref{ineq;L1-qr..;nr}, 
showing that this lemma also holds in the case $N=3$. 
\end{proof}
We next prepare a spatio-temporal bound on the mixed quantitiy $(\nep^p\cep^q)^\gamma$ for some $\gamma>\frac32$, which will be a key ingredient in verifying the supersolution property for the limit functions to be obtained in Section \ref{sec7}.
\begin{lem}\label{lem;esti;npcq}
Assume $\chi>0$ to satisfy \eqref{condi;chi;generalized} 
and let $p\in (0,\min\{1,\frac 1{\chi^2}\})$ 
and $q\in (q_-(p),q_+(p))$, with $q_\pm(p)$ as defined in \eqref{def;q+-}, fulfil
\begin{align}\label{condi;adi;pq}
p+ \frac{3q}{5} < \frac{2}{3}.
\end{align}
Then there is $\gamma > \frac{3}{2}$ such that for all $T>0$ there is $C(T)>0$ such that 
\[
  \iio (\nepp\cepq)^\gamma \le C(T)
\] 
holds for all $\ep\in (0,1)$. 
\end{lem}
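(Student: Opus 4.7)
The plan is to split $(\nep^p\cep^q)^\gamma$ via Hölder's inequality into a pure power of $\nep$ and a pure power of $\cep$, each of which is already under control by earlier lemmata. Specifically, the space--time $L^{r_0}$-bound on $\nep$ (for some $r_0>1$) provided by Lemma \ref{lem;Lr(Lr;n)} will handle the first factor, while the $L^s$-bound on $\cep$ coming from Lemma \ref{lem;Lresti;cep;N=3} (with any $s<\frac53$ in the case $N=3$) or Lemma \ref{lem;Lresti;cep;N=2} (with any $s<\infty$ in the case $N=2$) will handle the second. The hypothesis $p+\frac{3q}{5}<\frac{2}{3}$ is exactly what is needed to make these two requirements meet at some exponent $\gamma>\frac32$.

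In execution, I would first fix some $r_0>1$ yielding the conclusion of Lemma \ref{lem;Lr(Lr;n)}, then for $\gamma$ to be chosen momentarily set $\alpha:=p\gamma/r_0$ and apply Hölder's inequality with conjugate exponents $1/\alpha$ and $1/(1-\alpha)$ to the product $\nep^{p\gamma}\cdot\cep^{q\gamma}$, which gives
\[
\iio(\nep^p\cep^q)^\gamma\le\Big(\iio\nep^{r_0}\Big)^{\alpha}\Big(\iio\cep^{q\gamma/(1-\alpha)}\Big)^{1-\alpha}.
\]
The first factor is uniformly bounded by $C(T)^{\alpha}$. For the second factor, in the case $N=3$ I need to arrange $q\gamma/(1-\alpha)<\frac{5}{3}$, which upon substituting $\alpha=p\gamma/r_0$ becomes $\gamma\bigl(\tfrac{5p}{r_0}+3q\bigr)<5$. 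This condition admits some $\gamma>\frac32$ precisely when $\tfrac{p}{r_0}+\tfrac{3q}{5}<\tfrac23$, which since $r_0>1$ strictly holds under the hypothesis $p+\tfrac{3q}{5}<\tfrac23$. Hence I can pick $\gamma$ slightly above $\tfrac32$, verify that $\alpha\in(0,1)$, and invoke Lemma \ref{lem;Lresti;cep;N=3} (respectively Lemma \ref{lem;Lresti;cep;N=2}) to bound the second factor.

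The main obstacle, insofar as there is one, is identifying the correct Hölder decomposition that exactly saturates the hypothesis $p+\frac{3q}{5}<\frac{2}{3}$. The key observation making the argument work is that the strict inequality $r_0>1$ (rather than $r_0\ge 1$) in Lemma \ref{lem;Lr(Lr;n)} provides the extra slack $p/r_0<p$ needed to push past the threshold $\gamma=\frac32$; with the hypothesis only being a marginal strict inequality, one could not afford any loss here. Once this splitting is in place, the remainder is bookkeeping, and the two-dimensional case is automatic since Lemma \ref{lem;Lresti;cep;N=2} places no restriction on the $\cep$-exponent.
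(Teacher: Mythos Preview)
Your proof is correct and follows essentially the same approach as the paper: split $(\nep^p\cep^q)^\gamma$ into a pure power of $\nep$ and a pure power of $\cep$, and control each factor by Lemma~\ref{lem;Lr(Lr;n)} and Lemma~\ref{lem;Lresti;cep;N=3} (or~\ref{lem;Lresti;cep;N=2}) respectively. The only cosmetic difference is that the paper uses Young's inequality (choosing conjugate exponents $\gamma_1/\gamma$, $\gamma_2/\gamma$ with $\gamma_1<r/p$, $\gamma_2<5/(3q)$ and $\gamma=\gamma_1\gamma_2/(\gamma_1+\gamma_2)>3/2$) rather than H\"older's inequality, but the underlying exponent bookkeeping is identical.
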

\begin{proof}
Let $p\in (0,\min\{1,\frac 1{\chi^2}\})$ 
and $q\in (q_-(p),q_+(p))$ be numbers satisfying \eqref{condi;adi;pq}, and 
let $r>1$ be a constant as obtained in Lemma \ref{lem;Lr(Lr;n)}. 
Then, since the relation 
\begin{align}\label{lera;pq;gamma;npcp}
  \frac pr + \frac{3q}{5} < p+\frac{3q}5 < \frac{2}{3} 
\end{align}
holds, 
we can find constants $\gamma_1,\gamma_2>1$ such that 
\begin{align}\label{rela;gamma1gamma2}
  \gamma_1 < \frac{r}{p} 
\quad 
  \mbox{and} 
\quad 
  \gamma_2 < \frac{5}{3q}
\end{align} 
as well as 
\begin{align}\label{rela;gamma1gamma223}
  \frac{1}{\gamma_1} + \frac{1}{\gamma_2} < \frac{2}{3} 
\end{align}
are valid. Now, we put $\gamma := \frac{\gamma_1\gamma_2}{\gamma_1+\gamma_2}$ and obtain from \eqref{rela;gamma1gamma223} that $ \gamma > \frac 32$. 
Moreover, combination of \eqref{rela;gamma1gamma2} 
and Lemmata \ref{lem;Lr(Lr;n)}, \ref{lem;Lresti;cep;N=2} and \ref{lem;Lresti;cep;N=3} 
with Young's inequality provides $C_1(T)>0$ such that 
\[
  \iio (\nep^p\cep^q)^\gamma 
  \le \iio \nep^{p\gamma_1} + \iio \cep^{q\gamma_2} 
  \le C_1(T)
\]
holds for all $\ep\in(0,1)$.
\end{proof}
\begin{remark}
By continuity we can find some small $p\in (0,\min\{1,\frac{1}{\chi^2}\})$ and $q\in (q_-(p),q_+(p))$ such that 
\eqref{condi;adi;pq} holds, because 
\eqref{condi;adi;pq} is satisfied for $p=0$ and $q=0=q_-(0)$. 
\end{remark}
\section{Time regularity}\label{sec6}
This section provides regularity information on the time derivatives of the approximate solutions, or suitable transformations thereof,
which is required for the application of an Aubin--Lions type lemma. 
The proofs of the following three lemmata are based on duality arguments. 
\begin{lem}\label{lem;time;n}
Assume that $\chi>0$ satisfies \eqref{condi;chi;generalized} and 
$p\in (0,\min\{1,\frac 1{\chi^2}\})$. 
Then for all $T>0$ there exists $C(T)>0$ such that 
\[
  \int_0^T \|\pa_t (\nep+1)^\frac{p}{2}\|_{(W^{1,\infty}_0(\Omega))^\ast} \le C(T)
\]
for all $\ep\in (0,1)$. 
\end{lem}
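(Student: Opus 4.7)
The plan is to carry out a duality argument. For any $t\in(0,T)$ and any test function $\psi\in W^{1,\infty}_0(\Omega)$, I would start from
\[
\int_\Omega \partial_t(\nep+1)^{\frac{p}{2}}\psi = \frac{p}{2}\int_\Omega (\nep+1)^{\frac{p}{2}-1}\psi\,\partial_t\nep,
\]
substitute the right-hand side of \eqref{apcp1}, and then integrate by parts in every resulting integral so as to transfer all spatial derivatives onto $\psi$. This decomposes the expression into three groups of terms associated with diffusion, chemotaxis, and convection. The goal is to estimate each group by $\|\psi\|_{W^{1,\infty}(\Omega)}\cdot h(t)$ for some nonnegative $h\in L^1(0,T)$ whose $L^1$-norm is controlled independently of $\eps$.

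For the diffusive group, the identity $\tfrac{p}{2}(\nep+1)^{p/2-1}\nabla\nep=\nabla(\nep+1)^{p/2}$ turns the contribution into
\[
\tfrac{2-p}{p}\int_\Omega (\nep+1)^{-\frac{p}{2}}\big|\nabla(\nep+1)^{\frac{p}{2}}\big|^2\psi - \int_\Omega \nabla(\nep+1)^{\frac{p}{2}}\cdot\nabla\psi,
\]
which in view of $(\nep+1)^{-p/2}\le 1$ and the pointwise comparison $|\nabla(\nep+1)^{p/2}|\le|\nabla\nep^{p/2}|$ (valid since $p<2$) is controlled in $L^1(0,T)$ through the bound on $\int_0^T\!\!\int_\Omega|\nabla\nep^{p/2}|^2$ provided by Lemma \ref{lem;estifromIneq}. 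For the convective contribution, the combination of $\nabla\cdot\uep=0$ with $\psi|_{\partial\Omega}=0$ lets me recast it as $\int_\Omega(\nep+1)^{p/2}\uep\cdot\nabla\psi$; Hölder's inequality with exponents $2/p$ and $2/(2-p)$, mass conservation (Lemma \ref{lem;L1;nepcep}), and the $L^q$-bound on $\uep$ for $q<3$ from Corollary \ref{lem;Lr;u} together deliver a uniform-in-$(t,\eps)$ bound by $C\|\nabla\psi\|_\infty$.

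The main obstacle is the chemotactic group, which after integration by parts contains integrals of $|\nabla\nep|\cdot|\nabla\cep|/\cep$ weighted by $(\nep+1)^{p/2-2}\nep/(1+\eps\nep)$ (accompanied by $\psi$) and $(\nep+1)^{p/2-1}\nep/(1+\eps\nep)$ (accompanied by $\nabla\psi$); neither admits a direct a priori estimate since we lack control on $|\nabla\cep|/\cep$ itself. To bypass this I would combine two ingredients: the pointwise lower bound $\cep\ge(\inf c_0)e^{-T}=:c_T$ on $(0,T)$ from \eqref{ineq;lower;c}, and Lemma \ref{lem;L2;nablac^q/2}, which yields a space--time $L^2$-bound on $\nabla\cep^{q/2}$ for every $q\in(0,1)$. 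Fixing such a $q$ and using the rewriting
\[
\frac{|\nabla\cep|}{\cep} = \frac{2}{q}\,\cep^{-\frac{q}{2}}\big|\nabla\cep^{\frac{q}{2}}\big| \le \frac{2}{q}c_T^{-\frac{q}{2}}\big|\nabla\cep^{\frac{q}{2}}\big|,
\]
together with the elementary pointwise estimates $(\nep+1)^{p/2-2}\nep\le(\nep+1)^{p/2-1}$ and $(\nep+1)^{p/2-1}\nep\le(\nep+1)^{p/2}\le \nep+1$, a Cauchy--Schwarz step in space reduces the chemotactic integrals to products of $\big(\int_\Omega|\nabla(\nep+1)^{p/2}|^2\big)^{1/2}$ or the uniformly bounded $\big(\int_\Omega(\nep+1)^p\big)^{1/2}$ with $\big(\int_\Omega|\nabla\cep^{q/2}|^2\big)^{1/2}$. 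Each of the resulting pointwise-in-$t$ bounds is a product of two $L^2(0,T)$ functions and is therefore integrable in time by a final Cauchy--Schwarz in $t$. Adding up all contributions furnishes the asserted bound.
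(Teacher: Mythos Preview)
Your proposal is correct and follows the same duality strategy as the paper: test against $\psi\in W^{1,\infty}_0(\Omega)$, split into diffusion/chemotaxis/convection pieces, and bound each via Lemma~\ref{lem;estifromIneq}, Lemma~\ref{lem;L2;nablac^q/2}, and Corollary~\ref{lem;Lr;u}. The only notable difference is in the convective term: the paper leaves it as $\frac{p}{2}\int_\Omega(\nep+1)^{(p-2)/2}\nabla\nep\cdot\uep\,\psi$ and applies Young's inequality directly to obtain $\int_\Omega(\nep+1)^{p-2}|\nabla\nep|^2+\int_\Omega|\uep|^2$, whereas you first integrate by parts to $\int_\Omega(\nep+1)^{p/2}\uep\cdot\nabla\psi$ and then use H\"older with mass conservation; both routes work and rely on the same ingredients. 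For the chemotactic part the paper (via \cite[Lemma~7.1]{lankeit_winkler}) fixes the specific exponent $q=\tfrac{2}{3}$ (hence the $|\nabla\cep^{1/3}|$ in its estimate), while you keep $q\in(0,1)$ arbitrary---this is cosmetic.
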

\begin{proof}
Let $\psi \in C_0^\infty (\Omega)$ be such that $\| \psi \|_{W^{1,\infty}(\Omega)} \le 1$. 
Noting from the Young inequality that 
\[
  \left|\frac p2 \io (\nep+1)^{\frac{p-2}{2}}\na \nep \cdot \uep \psi \right| 
  \le \frac{p}{4}\lp{\infty}{\psi} 
  \left(  
    \io (\nep+1)^{p-2}|\na \nep|^2 + \io |\uep|^2
  \right), 
\]
from arguments similar to those in \cite[Lemma 7.1]{lankeit_winkler} we obtain that 
\[
 \left| \io \pa_t (\nep+1)^{\frac p2} \psi \right| 
 \le C_1 \left\{ \io |\na \nep^{\frac p2}|^2 + \io |\na \cep^{\frac 13}| +\io |\uep|^2 + 1 \right\}
\]
on $(0,T)$ for all $\ep\in (0,1)$ with some $C_1>0$. Thus, the inequality 
\begin{align*} 
   \|\pa_t (\nep+1)^\frac{p}{2}\|_{(W^{1,\infty}_0(\Omega))^\ast} 
  & =  \sup \Bigg\{\, \Big| \io \pa_t (\nep+1)^{\frac p2} \psi \Big| \, \Bigg| \, 
      \psi\in C^\infty_0 (\Omega), \ \| \psi \|_{W^{1,\infty}(\Omega)}\le 1 \, \Bigg\}  
\\
  & \le C_1 \Big\{ \io |\na \nep^{\frac p2}|^2 + \io |\na \cep^{\frac 13}| +\io |\uep|^2 + 1 \Big\},
  \end{align*}
together with Lemmata \ref{lem;estifromIneq} and \ref{lem;L2;nablac^q/2} and Corollary \ref{lem;Lr;u}
implies that there is some $C_2(T)>0$ such that 
\[  
  \int_0^T \|\pa_t (\nep+1)^\frac{p}{2}\|_{(W^{1,\infty}_0(\Omega))^\ast} \le C_2(T)
\]
holds for all $\ep\in (0,1)$. 
\end{proof}
\begin{lem}\label{lem;time;c}
Let $\chi >0$ and $T>0$.  
Then there exists $C(T)>0$ such that 
\[
  \int_0^T \|\cept\|_{(W^{1,\infty}_0(\Omega))^\ast} \le C(T)
\]
for all $\ep\in (0,1)$. 
\end{lem}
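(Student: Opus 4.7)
The plan is to follow the duality template already used in Lemma \ref{lem;time;n}: test the equation \eqref{apcp2} against an arbitrary $\psi\in C_0^\infty(\Omega)$ with $\|\psi\|_{W^{1,\infty}(\Omega)}\le 1$, pointwise in $t\in(0,T)$, and estimate the resulting terms by quantities whose $L^1((0,T))$-norm is $\ep$-independently controlled by previously established lemmata.

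More concretely, since $\nabla\cdot \uep = 0$ and $\uep=0$ on $\partial\Omega$, I would write
\begin{equation*}
 \io \cept\,\psi = -\io \nabla \cep\cdot\nabla\psi - \io \cep\psi + \io \nep\psi + \io \cep\,\uep\cdot\nabla\psi,
\end{equation*}
so that, using $\|\psi\|_{L^\infty}\le 1$ and $\|\nabla\psi\|_{L^\infty}\le 1$,
\begin{equation*}
 \|\cept\|_{(W^{1,\infty}_0(\Omega))^\ast} \le \io |\nabla\cep| + \io \cep + \io \nep + \io \cep|\uep|.
\end{equation*}
The first three terms, upon integration over $(0,T)$, are bounded uniformly in $\ep$ by Lemma \ref{lem;Lr;nablac} (with $r=1$), Lemma \ref{lem;L1;nepcep}, and mass conservation from Lemma \ref{lem;L1;nepcep}, respectively.

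The remaining, mildly nonlinear, drift term $\iio \cep|\uep|$ is the one that requires some care, especially in the three-dimensional setting where only $L^r_{t,x}$-bounds with $r<\tfrac{5}{3}$ are available for $\cep$ (Lemma \ref{lem;Lresti;cep;N=3}). My plan is to use space-time Hölder with exponents $\alpha\in(\tfrac{3}{2},\tfrac{5}{3})$ and $\beta=\alpha/(\alpha-1)\in(\tfrac{5}{2},3)$,
\begin{equation*}
 \iio \cep|\uep| \le \left(\iio \cep^\alpha\right)^{\!\!1/\alpha}\left(\iio |\uep|^\beta\right)^{\!\!1/\beta},
\end{equation*}
so that the first factor is controlled by Lemma \ref{lem;Lresti;cep;N=3} (or by Lemma \ref{lem;Lresti;cep;N=2} in the two-dimensional case, where any $\alpha\in(1,\infty)$ would do) and the second factor by Corollary \ref{lem;Lr;u}, which gives $\uep\in L^\infty((0,T);L^\beta(\Omega))$ for any $\beta<3$. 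Putting these estimates together and taking the supremum over admissible $\psi$ yields the desired bound.

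The only nontrivial step is the treatment of the transport contribution in dimension three, since the restriction $r<\tfrac{5}{3}$ for $\cep$ forces $\beta>\tfrac{5}{2}$, which is still within the admissible range $\beta<3$ of Corollary \ref{lem;Lr;u}; this is exactly the place where the $\chi$-restriction from earlier lemmata is being indirectly used.
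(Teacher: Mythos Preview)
Your proof is correct and follows essentially the same route as the paper: test \eqref{apcp2} against $\psi\in C_0^\infty(\Omega)$ with $\|\psi\|_{W^{1,\infty}}\le 1$, and split the transport term $\io \cep|\uep|$ into separate powers of $\cep$ and $\uep$ controlled by Lemmata \ref{lem;Lresti;cep;N=2}/\ref{lem;Lresti;cep;N=3} and Corollary \ref{lem;Lr;u}; the paper merely uses Young's inequality with the specific conjugate pair $(\tfrac{30}{19},\tfrac{30}{11})$ in place of your H\"older argument with generic $(\alpha,\beta)$. One minor correction to your closing remark: none of the lemmata invoked here carry a $\chi$-restriction (the statement is for arbitrary $\chi>0$), so the condition \eqref{condi;chi;generalized} plays no role in this proof.
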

\begin{proof}
Let $\psi \in C^\infty_0 (\Omega)$ be such that $\| \psi \|_{W^{1,\infty} (\Omega)} \le 1$. 
Since 
the Young inequality and 
arguments similar to those in the proof of \cite[Lemma 7.2]{lankeit_winkler} imply 
\[
  \|\cept\|_{(W^{1,\infty}_0(\Omega))^\ast} 
  \le \io |\na \cep| + \io \cep + \io \nep 
  + \io \cep^{\frac{30}{19}} 
  + \io |\uep|^{\frac{30}{11}}, 
\]
from 
Corollary \ref{lem;Lr;u}, and Lemmata \ref{lem;L1;nepcep}, \ref{lem;Lresti;cep;N=2} 
and \ref{lem;Lresti;cep;N=3} we deduce 
that there exists some $C_1(T)>0$ such that 
\[
  \int_0^T \|\cept\|_{(W^{1,\infty}_0(\Omega))^\ast} \le C_1(T) 
\]
holds for all $\ep\in(0,1)$. 
\end{proof}
%
\begin{lem}\label{lem;time;u}
Let $\chi>0$. 
Then there exists $C>0$ such that 
\[
  \|\uept (\cdot,t)\|_{(W^{1,4}_{0,\sigma}(\Omega))^\ast} \le C
\]
holds for all $t\in (0,\infty)$ and  $\ep\in (0,1)$. 
\end{lem}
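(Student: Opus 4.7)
The plan is to test the Stokes equation \eqref{apcp3} against arbitrary solenoidal $\psi \in W^{1,4}_{0,\sigma}(\Omega)$ with $\|\psi\|_{W^{1,4}(\Omega)} \le 1$ and exploit that the pressure gradient is orthogonal to such $\psi$, so it drops out of the computation entirely. After integration by parts in the diffusion term we arrive at the identity
\[
  \io \uept \cdot \psi = -\io \nabla \uep : \nabla \psi + \io \nep \nabla\Phi\cdot \psi,
\]
and it suffices to bound both terms on the right by a constant independent of $t$ and $\ep$.

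For the diffusion term, I would apply Hölder's inequality with exponents $\tfrac43$ and $4$ to obtain $|\io \nabla \uep : \nabla \psi| \le \|\nabla \uep\|_{L^{4/3}(\Omega)}\|\nabla \psi\|_{L^4(\Omega)}$. Since $\tfrac43 < \tfrac32$, Corollary \ref{lem;Lr;u} provides a constant $C_1>0$, independent of $\ep$ and $t$, controlling $\|\uep\cd\|_{W^{1,4/3}(\Omega)}$. For the force term, the idea is to put $\nep$ into $L^1$ (where it enjoys mass conservation) and $\psi$ into $L^\infty$. More precisely, since both $N=2$ and $N=3$ satisfy $N<4$, the Sobolev embedding $W^{1,4}_0(\Omega)\hookrightarrow L^\infty(\Omega)$ applies, yielding a constant $C_2>0$ with $\|\psi\|_{L^\infty(\Omega)} \le C_2 \|\psi\|_{W^{1,4}(\Omega)}\le C_2$; combined with the assumption $\Phi\in C^2(\ol\Omega)$ from \eqref{condi;ini2} and the identity $\io \nep \cd = \io n_0$ from Lemma \ref{lem;L1;nepcep}, this yields
\[
  \left|\io \nep \nabla \Phi \cdot \psi\right| \le \|\nabla \Phi\|_{L^\infty(\Omega)} \|\psi\|_{L^\infty(\Omega)} \io \nep \le C_2 \|\nabla \Phi\|_{L^\infty(\Omega)} \io n_0.
\]
Taking the supremum over admissible $\psi$ and combining the two bounds furnishes the desired uniform estimate
\[
  \|\uept \cd\|_{(W^{1,4}_{0,\sigma}(\Omega))^\ast} \le C
\]
for all $t\in(0,\infty)$ and $\ep\in(0,1)$, with a suitable $C>0$ depending only on the data.

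There is essentially no obstacle here; unlike in Lemmata \ref{lem;time;n} and \ref{lem;time;c}, the linear structure of the Stokes equation together with the particularly favorable Dirichlet setting (which kills the pressure and allows integration by parts cleanly) and the fact that $N<4$ (which furnishes the critical $L^\infty$ embedding for the test function) produce a pointwise-in-$t$ bound rather than merely an integral one. The only point that has to be checked is that Corollary \ref{lem;Lr;u} is strong enough for the exponent $\tfrac43$, which is the case precisely because $\tfrac43<\tfrac32$.
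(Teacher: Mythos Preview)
Your proof is correct and follows essentially the same approach as the paper: testing against $\psi\in W^{1,4}_{0,\sigma}(\Omega)$, applying H\"older with exponents $\tfrac43$ and $4$ to the gradient term (the paper calls this ``Young'' but means H\"older), invoking Corollary~\ref{lem;Lr;u} for $\|\uep\|_{W^{1,4/3}}$, and estimating the force term via $\|\nep\|_{L^1}\|\nabla\Phi\|_{L^\infty}\|\psi\|_{L^\infty}$ together with Lemma~\ref{lem;L1;nepcep}. You make explicit the Sobolev embedding $W^{1,4}_0(\Omega)\hookrightarrow L^\infty(\Omega)$ needed for the $\|\psi\|_{L^\infty}$ factor, which the paper leaves implicit.
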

\begin{proof}
Let $\psi \in W^{1,4}_{0,\sigma}(\Omega;\R^N)$ be such that $\|\psi\|_{W^{1,4}(\Omega)} \le 1$. 
An application of the Young inequality entails that 
\begin{align*}
  \left| \io \uept \psi \right| 
  &= \left| - \io \nabla \uep \cdot \nabla \psi + \io \nep\nabla \Phi\cdot \psi  \right|
\\
  &\le \lp{\frac{4}{3}}{\na \uep}\lp{4}{\na \psi} + \lp{1}{\nep} \lp{\infty}{\nabla \Phi} \lp{\infty}{\psi}
\end{align*}
holds for all $\ep \in (0,1)$. 
Thus, from Lemmata \ref{lem;L1;nepcep} and \ref{lem;Lr;u} we establish that with some $C_1>0$,
\[  
  \|\uept (\cdot,t)\|_{(W^{1,4}_{0,\sigma}(\Omega))^\ast} \le C_1  
\]
for all $t\in (0,\infty)$ and all $\ep\in (0,1)$, completing the proof of this lemma.
\end{proof}
\section{Convergence: Proof of Theorem \ref{mainthm}}\label{sec7}
In this section we complete the proof of Theorem \ref{mainthm}. 
The following lemma is a consequence of 
the estimates prepared in Sections \ref{sec4}--\ref{sec6}. 
\begin{lem}\label{lem;conv}
  Suppose that $\chi>0$ satisfies \eqref{condi;chi;generalized} and 
  let $p\in (0,1)$ and $q\in (0,1)$ be such that $p < \frac{1}{\chi^2}$ 
  and $q \in (q_-(p),q_+(p))$, where $q_\pm(p)$ are defined as in \eqref{def;q+-}, and assume they satisfy \eqref{condi;adi;pq}  as well. 
  Then there exist $(\ep_j)_{j\in \mathbb{N}}\subset (0,1)$ and functions $n,c,u$ such that 
  $\ep_j \searrow 0$ as $j\to \infty$ and 
\begin{align} \label{conv;ninL1}
  &\nep \to n  
  && \mbox{in} \ L^1_{loc}(\ol{\Omega}\times [0,\infty)) \ 
  \mbox{and a.e.\ in}\ \Omega\times (0,\infty), \\
\label{conv;ninmorethanL1}
  &\nep \to n  
  && \mbox{in} \ L^s_{loc}(\ol{\Omega}\times [0,\infty)) \mbox{ for some } s>1, 
\\  \label{conv;nanepp/2}  
  &\na \nep^{\frac p2} \rightharpoonup \na n^{\frac p2} 
  && \mbox{in} \ L^2_{loc} (\ol{\Omega}\times [0,\infty);\R^N), 
\\ \label{conv;c}
 &\cep \to c  
  && \mbox{in} \ L^r_{loc}(\ol{\Omega}\times [0,\infty)) \ 
  \mbox{and a.e.\ in}\ \Omega\times (0,\infty) \ 
  \mbox{{\rm (}for all} \ r\in [1,\tfrac53)\mbox{\rm )}, 
\\\label{conv;nac}
  &\na \cep \rightharpoonup \na c 
  && \mbox{in} \ L^1_{loc} (\ol{\Omega}\times [0,\infty);\R^N), 
\\\label{conv;nacq2}
  &\na \cep^{\frac q2} \rightharpoonup \na c^{\frac q2} 
  && \mbox{in} \ L^2_{loc} (\ol{\Omega}\times [0,\infty);\R^N), 
\\\label{conv;u}
  &\uep \to u  
  && \mbox{in} \ L^s_{loc}([0,\infty);L^s_\sigma(\Omega))\ 
  \mbox{and a.e.\ in}\ \Omega\times (0,\infty) \ 
  \mbox{{\rm (}for all} \ s\in [1,3)\mbox{\rm )}, 
\\ \label{conv;nau}
  &\na \uep \rightharpoonup \na u 
  && \mbox{in} \ L^1_{loc} (\ol{\Omega}\times [0,\infty);\R^{N\times N})  
\end{align} 
as $\ep=\ep_j \searrow 0$, and 
\begin{align}\label{massconv;n}
  \io n\cd = \io n_0 \quad \mbox{for a.e.} \ t>0. 
\end{align}
Moreover, 
\[
 n\ge 0, \;\; c\ge 0\qquad \text{ a.e. in } \Om\times(0,\infty).
\]
\end{lem}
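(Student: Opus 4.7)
The strategy is classical: assemble the uniform-in-$\varepsilon$ estimates collected in Sections \ref{sec4}--\ref{sec6} and invoke an Aubin--Lions type lemma to obtain strong convergence (along a subsequence) of suitable powers of $\nep$, of $\cep$ and of $\uep$. Pointwise a.e.\ convergence of the original functions is then extracted by passing to a further subsequence; weak convergence of the various gradients follows from their uniform $L^2_{loc}$- or $L^1_{loc}$-boundedness, and the limits are identified via the a.e.\ convergence. Finally, the mass identity \eqref{massconv;n} is obtained by combining Lemma \ref{lem;L1;nepcep} with the strong $L^1_{loc}$-convergence \eqref{conv;ninL1} through a standard Fubini argument.

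\textbf{Strong convergence of $\nep$ and $\cep$.} For fixed $p,q$ satisfying the assumptions, Lemma \ref{lem;estifromIneq} gives a uniform bound for $\nabla\nep^{p/2}$ in $L^2_{loc}$, so that together with Lemma \ref{lem;Lr(Lr;n)} the sequence $(\nep+1)^{p/2}$ is bounded in $L^2((0,T);W^{1,2}(\Omega))$, while Lemma \ref{lem;time;n} controls its time derivative in $L^1((0,T);(W^{1,\infty}_0(\Omega))^*)$. Since $W^{1,2}(\Omega) \hookrightarrow\hookrightarrow L^2(\Omega) \hookrightarrow (W^{1,\infty}_0(\Omega))^*$, an Aubin--Lions argument produces a subsequence $\varepsilon_j \searrow 0$ along which $(\nep+1)^{p/2}\to v$ in $L^2(\Omega\times(0,T))$ and a.e., which via continuity of $x\mapsto x^{2/p}-1$ gives $\nep\to n$ a.e. The uniform $L^r_{loc}$ bound from Lemma \ref{lem;Lr(Lr;n)} with some $r>1$ then yields equi-integrability on any bounded cylinder, and Vitali's theorem upgrades the a.e.\ convergence to \eqref{conv;ninL1} and \eqref{conv;ninmorethanL1} for any $s\in[1,r)$. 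An entirely analogous reasoning applied to $\cep$ --- bounded in $L^r$ for $r<5/3$ by Lemmata \ref{lem;Lresti;cep;N=2}--\ref{lem;Lresti;cep;N=3}, with $\nabla\cep\in L^r_{loc}$ for $r\in[1,5/4)$ from Lemma \ref{lem;Lr;nablac} and time derivative bounded by Lemma \ref{lem;time;c} --- yields \eqref{conv;c} after a further subsequence extraction.

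\textbf{Convergence of $\uep$ and identification of weak limits.} Corollary \ref{lem;Lr;u} controls $\uep$ and $\nabla\uep$ uniformly in $L^s_{loc}$ for the relevant exponents, Lemma \ref{lem;time;u} provides the time regularity, and another application of Aubin--Lions, combined with the Vitali argument as above, gives \eqref{conv;u}; the weak gradient statement \eqref{conv;nau} is immediate from the $L^r$-bounds. For the remaining assertions \eqref{conv;nanepp/2}, \eqref{conv;nac}, \eqref{conv;nacq2}, the $L^2_{loc}$- (respectively $L^r_{loc}$-) bounds from Lemma \ref{lem;estifromIneq}, Lemma \ref{lem;Lr;nablac} and Lemma \ref{lem;L2;nablac^q/2} give weak compactness, and the a.e.\ convergence of $\nep$ and $\cep$ established above identifies the weak limits as $\nabla n^{p/2}$, $\nabla c$ and $\nabla c^{q/2}$ in the standard way (testing against smooth compactly supported functions and integrating by parts). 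Nonnegativity of $n$ and $c$ passes to the limit from $\nep,\cep\ge 0$.

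\textbf{Mass conservation.} From Lemma \ref{lem;L1;nepcep} we have $\io \nep(\cdot,t)=\io n_0$ for all $t>0$ and every $\varepsilon\in(0,1)$. Since \eqref{conv;ninL1} yields $\int_0^T\io|\nep-n|\to 0$ for any $T>0$, along a further subsequence (still denoted $\varepsilon_j$) $\io|\nep(\cdot,t)-n(\cdot,t)|\to 0$ for a.e.\ $t>0$, and hence $\io n(\cdot,t) = \lim_{j\to\infty} \io \nep(\cdot,t) = \io n_0$ for a.e.\ $t>0$, which is \eqref{massconv;n}.

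\textbf{Main obstacle.} The only non-routine step is obtaining strong convergence of $\nep$ in a space better than $L^1_{loc}$, since Aubin--Lions naturally produces compactness for $(\nep+1)^{p/2}$ rather than for $\nep$ itself; the bridge is the $L^r$-bound with $r>1$ from Lemma \ref{lem;Lr(Lr;n)}, whose existence rests precisely on \eqref{condi;chi;generalized} and is what forces the upper bound $\chi<5/3$ in dimension three.
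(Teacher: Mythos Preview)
Your proof is correct and follows essentially the same route as the paper: Aubin--Lions applied to $(\nep+1)^{p/2}$, $\cep$ and $\uep$ using the a priori estimates of Sections \ref{sec4}--\ref{sec6}, followed by Vitali's theorem to upgrade convergence, weak compactness for the gradients, and the Fubini/subsequence argument for mass conservation. The only cosmetic difference is that the paper cites Lemma \ref{lem;L1;nepcep} (the $L^1$ bound on $\nep$) rather than Lemma \ref{lem;Lr(Lr;n)} to control $\|(\nep+1)^{p/2}\|_{L^2(\Omega)}$, which suffices since $p<1$.
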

\begin{proof}
Let $p,q\in (0,1)$ be such that $p<\frac 1{\chi^2}$ and $q\in (q_-(p),q_+(p))$.  
Then Lemmata \ref{lem;L1;nepcep}, \ref{lem;estifromIneq} and \ref{lem;time;n} enable us to see that 
\[
  \left( (\nep+ 1)^\frac{p}{2} \right)_{\ep \in (0,1)} \quad \mbox{is bounded in} \ 
  L^2_{loc} ([0,\infty);W^{1,2}(\Omega)) 
\]
and 
\[
    \left( \pa_t (\nep+ 1)^\frac{p}{2} \right)_{\ep \in (0,1)} \quad \mbox{is bounded in} \ 
  L^2_{loc} ([0,\infty);(W^{1,\infty}_0(\Omega))^\ast), 
\]
which together with the Aubin--Lions lemma \cite[Corollary 8.4]{Simon} 
provides $(\ep_j)_{j\in \mathbb{N}}$ satisfying $\ep_j\searrow 0$ as $j\to \infty$ 
and a function $(n+1)^\frac{p}{2}:=v \in L^2_{loc}(\ol{\Omega} \times [0,\infty))$ such that 
$(\nep+1)^\frac{p}{2} \to v$ in $L^2_{loc} (\ol{\Omega}\times [0,\infty))$ 
and a.e.\ in $\Omega\times (0,\infty)$ as $\ep=\ep_j \searrow 0$. 
Furthermore, we deduce from Lemma \ref{lem;estifromIneq} that 
\eqref{conv;nanepp/2} holds. 
Moreover, aided by Lemma \ref{lem;Lr(Lr;n)} 
we invoke the Vitali convergence theorem to verify that 
\eqref{conv;ninL1}, even \eqref{conv;ninmorethanL1}, and, due to Lemma \ref{lem;L1;nepcep}, \eqref{massconv;n} hold. 
We next note that for $a\in(1,\f54)$ 
%
%
from Lemmata \ref{lem;Lr;nablac} and \ref{lem;time;c} we can infer that 
\[
  (\cep)_{\ep \in (0,1)} \quad \mbox{is bounded in} \ L^{a}_{loc} ([0,\infty); W^{1,a}(\Omega))  
\]
and 
\[
  (\cept)_{\ep\in (0,1)} \quad 
  \mbox{is bounded in} \ L^1_{loc}([0,\infty); (W^{1,\infty}_0(\Omega))^\ast), 
\]
so that another application of the Aubin--Lions lemma demonstrates the existence of a further subsequence (again denoted by $(\ep_j)_{j\in \mathbb{N}}$) and a function $c\in L^a_{loc}(\Ombar\times[0,\infty))$ such that 
$c_{\ep}\to c$ in $L^a_{loc}(\Ombar\times[0,\infty))$ and a.e.\ in $\Omega\times (0,\infty)$ as $\ep = \ep_j \searrow 0$. 
According to Lemma \ref{lem;Lresti;cep;N=3} (or Lemma \ref{lem;Lresti;cep;N=2}) and, again, Vitali's convergence theorem, this can be improved to \eqref{conv;c}. Lemmata \ref{lem;Lr;nablac} and \ref{lem;L2;nablac^q/2} yield \eqref{conv;nac} and \eqref{conv;nacq2}, respectively.
For $b\in(1,\f32)$, 
we obtain from Corollary \ref{lem;Lr;u} and Lemma \ref{lem;time;u} that  
\[
  (\uep)_{\ep \in (0,1)} \ \mbox{is bounded in} \ 
  L^b_{loc} ([0,\infty); W^{1,b}_{0,\sigma}(\Omega)) 
\]
and 
\[
  (\uept)_{\ep \in (0,1)} \ \mbox{is bounded in} \ 
  L^1_{loc}([0,\infty); (W^{1,4}_{0,\sigma}(\Omega))^\ast)
\]
and similarly invoking the Aubin--Lions lemma and Vitali's theorem together with the second part of Corollary \ref{lem;Lr;u}, we obtain \eqref{conv;u}, whereas \eqref{conv;nau} again is immediate from the first part of Corollary \ref{lem;Lr;u}.
Nonnegativity of $n$ and $c$ follows directly from \eqref{conv;ninL1} and \eqref{conv;c} and nonnegativity of $\nep$ and $\cep$.
\end{proof}
Now we can verify that $(n,c,u)$ is a global weak solution of \eqref{cp2}--\eqref{cp5}. 
\begin{lem}\label{lem;weaksol}
  If $\chi>0$ satisfies \eqref{condi;chi;generalized}, then the triplet $(n,c,u)$ 
  obtained in Lemma \ref{lem;conv} is a global weak solution of 
  \eqref{cp2}--\eqref{cp5}. 
\end{lem}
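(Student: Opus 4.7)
The plan is to establish the two integral identities of Definition \ref{def;weaksol} as limits $\ep=\ep_j\searrow 0$ of the corresponding weak formulations of the approximate equations \eqref{apcp2} and \eqref{apcp3}. Testing \eqref{apcp2} against $\varphi\in C^\infty_0(\overline{\Omega}\times[0,\infty))$, integrating by parts over $\Omega\times(0,\infty)$, and using $\nabla\cdot\uep=0$ along with the boundary conditions \eqref{apcp4}, one obtains
\begin{align*}
-\iiio \cep\varphi_t -\io c_0\varphi(\cdot,0) = -\iiio \nabla\cep\cdot\nabla\varphi -\iiio \cep\varphi +\iiio \nep\varphi -\iiio \cep\uep\cdot\nabla\varphi,
\end{align*}
whereas testing \eqref{apcp3} against $\psi\in C^\infty_{0,\sigma}(\overline{\Omega}\times[0,\infty))$ annihilates the pressure term because $\nabla\cdot\psi=0$ and yields
\begin{align*}
-\iiio \uep\cdot\psi_t -\io u_0\cdot\psi(\cdot,0) = \iiio \nabla\uep\cdot\nabla\psi -\iiio \nep\nabla\Phi\cdot\psi.
\end{align*}

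I would then pass term by term to the limit along the sequence provided by Lemma \ref{lem;conv}. All linear contributions go through directly: \eqref{conv;c} handles $\iiio \cep\varphi_t$ and $\iiio \cep\varphi$; \eqref{conv;nac} handles $\iiio \nabla\cep\cdot\nabla\varphi$; \eqref{conv;ninL1} together with \eqref{condi;ini2} handles $\iiio \nep\varphi$ and $\iiio \nep\nabla\Phi\cdot\psi$; and \eqref{conv;u} and \eqref{conv;nau} dispose of the two corresponding linear terms in the fluid identity. The regularity requirements of Definition \ref{def;weaksol} themselves are satisfied with any $\eta\in(0,\tfrac16)$: $n\in L^1_{loc}(\overline{\Omega}\times[0,\infty))$ by \eqref{conv;ninL1}, $c\in L^{3/2+\eta}_{loc}(\overline{\Omega}\times[0,\infty))$ by \eqref{conv;c}, $\nabla c\in L^1_{loc}$ by Lemma \ref{lem;Lr;nablac}, and $u\in L^{(3+2\eta)/(1+2\eta)}_{loc}$ by \eqref{conv;u} since $\tfrac{3+2\eta}{1+2\eta}<3$.

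The only ingredient deserving a moment's attention is the nonlinear transport term $\iiio \cep\uep\cdot\nabla\varphi$. Since $\tfrac35+\tfrac13=\tfrac{14}{15}<1$, I can pick $r<\tfrac53$ and $s<3$ sufficiently close to these endpoints that $\tfrac1r+\tfrac1s<1$; writing
\begin{align*}
\cep\uep - cu = \cep(\uep-u) + (\cep-c)u
\end{align*}
and applying H\"older's inequality together with the strong convergences $\cep\to c$ in $L^r_{loc}$ from \eqref{conv;c} and $\uep\to u$ in $L^s_{loc}$ from \eqref{conv;u} then yields $\cep\uep\to cu$ in $L^p_{loc}$ for the exponent $p>1$ with $\tfrac1p=\tfrac1r+\tfrac1s$, which is amply sufficient to pass to the limit against the bounded factor $\nabla\varphi$. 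This is the only real obstacle, and the strong $L^r_{loc}$ and $L^s_{loc}$ convergences in Lemma \ref{lem;conv} have been tailored precisely with this coupling in mind; everything else is a linear passage to the limit.
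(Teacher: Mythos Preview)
Your proposal is correct and follows essentially the same approach as the paper's proof: testing \eqref{apcp2} and \eqref{apcp3} against $\varphi$ and $\psi$, then passing to the limit along $(\ep_j)$ using the convergences \eqref{conv;ninL1}, \eqref{conv;c}, \eqref{conv;nac}, \eqref{conv;u}, \eqref{conv;nau}. The paper dispatches the transport term $\iiio \cep\uep\cdot\nabla\varphi$ without comment, whereas you spell out the H\"older argument showing $\cep\uep\to cu$ in $L^p_{loc}$ for some $p>1$; this is a helpful elaboration but not a different method.
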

\begin{proof}
We first note from Lemma \ref{lem;conv} that 
the required regularity  conditions of the solution $(n,c,u)$ are satisfied 
with $\eta \in (0,\f16)$. 
Now, we let $\varphi \in C^\infty_0 (\ol{\Omega}\times [0,\infty))$ 
and $\psi \in C^\infty_{0,\sigma} (\ol{\Omega}\times [0,\infty))$. 
Testing \eqref{apcp2} and \eqref{apcp3} by 
$\varphi$ and $\psi$, respectively, and using integration by parts, 
we derive that 
\[
  -\iiio \cep \varphi_t -\io c_0 \varphi(\cdot,0) = -\iiio \nabla \cep \cdot \nabla \varphi - 
  \iiio \cep \varphi + \iiio \nep \varphi + \iiio \cep\uep \cdot \nabla \varphi    
\]
and 
\[
  -\iiio \uep\cdot \psi_t -\io u_0\cdot \psi (\cdot,0) = 
  -\iiio \na \uep \cdot\nabla \psi - \iiio \nep \na \Phi \cdot \psi
\]
hold for all $\ep \in (0,1)$. 
Then passing to the limit in the above identities as $\ep=\ep_j \searrow 0$ 
on the basis of \eqref{conv;c}, \eqref{conv;nac}, \eqref{conv;ninL1}, \eqref{conv;u} and \eqref{conv;nau} 
leads to 
this lemma. 
\end{proof}
In order to verify that $(n,c,u)$ is also a global weak $(p,q)$-supersolution we also have to obtain the positivity properties present in Definition \ref{def;weak(p,q)supersol}. As a preparatory step we state the following two lemmata, which have already been shown in \cite{lankeit_winkler}. 
\begin{lem}\label{lem;DI}
Let $a,b,T>0$ and 
let $y:(0,T)\to \mathbb{R}$ be a continuously differentiable function 
satisfying 
\[
  y'(t)  \le  - a y^2(t) + b
\quad \mbox{for all} \ t\in (0,T) \ \mbox{at which} \ y(t)>0. 
\]
Then 
\[
  y(t) \le \sqrt{\frac{b}{a}} \coth (\sqrt{ab} t) \quad t \in (0,T). 
\]
\end{lem}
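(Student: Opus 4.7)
The plan is to compare $y$ with the explicit solution $Y(t):=\sqrt{b/a}\coth(\sqrt{ab}\,t)$ of the associated Riccati equation $Y'=-aY^2+b$. A direct calculation using $\coth'=1-\coth^2$ shows that $Y$ indeed satisfies this ODE on $(0,\infty)$, and moreover $Y(t)\to\infty$ as $t\searrow 0$ while $Y(t)>0$ throughout $(0,T)$. In particular, whenever $y(t)\le 0$ the inequality $y(t)\le Y(t)$ is immediate, so only the case $y(t)>0$ requires work.

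I would argue by contradiction: suppose there exists $t_0\in(0,T)$ with $y(t_0)>Y(t_0)$. Since $y$ is continuous on $(0,T)$ and $Y(t)\to\infty$ as $t\searrow 0$, the difference $y-Y$ is negative near $0$ and positive at $t_0$; hence there is a largest $t_1\in(0,t_0)$ with $y(t_1)=Y(t_1)$, and $y(t)>Y(t)$ for all $t\in(t_1,t_0]$. On this interval, since both $y(t)$ and $Y(t)$ are strictly positive, the hypothesis yields
\[
(y-Y)'(t) \;\le\; -a\kl{y(t)^2-Y(t)^2} \;=\; -a\kl{y(t)+Y(t)}\kl{y(t)-Y(t)} \;\le\; 0,
\]
so $y-Y$ is nonincreasing on $[t_1,t_0]$. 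Integrating this gives $y(t_0)-Y(t_0)\le y(t_1)-Y(t_1)=0$, contradicting the strict inequality at $t_0$.

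The main subtlety (though a mild one) will be the careful selection of $t_1$ as the largest crossing point, which is what gives the interval on which the signs are controlled so that the factor $y+Y$ is positive and the Riccati-type differential inequality can be converted into monotonicity of $y-Y$. The use of the singular supersolution $Y$ at $t=0$ is what allows the argument to avoid requiring any initial condition on $y$: since $Y$ blows up as $t\to 0^+$, the comparison is automatic near the origin and we only need to propagate it forward.
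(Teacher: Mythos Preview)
The paper itself does not prove this lemma; it merely cites \cite[Lemma 8.3]{lankeit_winkler}. Your comparison argument with the explicit Riccati solution $Y$ is the natural one, and the core step $(y-Y)'\le -a(y+Y)(y-Y)\le 0$ on an interval where $y>Y>0$ is correct.

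There is, however, one genuine gap. You assert that ``since $y$ is continuous on $(0,T)$ and $Y(t)\to\infty$ as $t\searrow 0$, the difference $y-Y$ is negative near $0$.'' Continuity on the \emph{open} interval $(0,T)$ does not prevent $y$ itself from blowing up as $t\to 0^+$; the hypotheses impose no condition whatsoever on $y$ near $t=0$, so a priori $y$ could diverge there too, and then your crossing point $t_1$ need not exist. A clean repair: for fixed $s\in(0,t_0)$, compare $y$ with the shifted barrier $Z_s(t):=\sqrt{b/a}\,\coth\!\big(\sqrt{ab}\,(t-s)\big)$ on $(s,t_0]$. Since $[s,t_0]\subset(0,T)$ is compact, $y$ is bounded there, while $Z_s(t)\to\infty$ as $t\searrow s$; your argument now applies verbatim and yields $y(t_0)\le Z_s(t_0)$. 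Letting $s\searrow 0$ gives $y(t_0)\le Y(t_0)$.
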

\begin{proof}
This lemma can be found in \cite[Lemma 8.3]{lankeit_winkler}.  
\end{proof}
\begin{lem}\label{lem;oneofkey}
Let $\eta > 0$. 
Then there exists $C>0$ such that every function $\varphi\in C^1(\ol{\Omega})$ fulfiling 
\[
  \big| \{ x\in \Omega \, | \, \varphi(x) > \delta \} \big| > \eta
\]
for some $\delta > 0$ satisfies 
\[
  \io \frac{|\nabla \varphi|^2}{\varphi} \ge C \left \{ \io \ln \frac \delta\varphi  \right\}^2 
  \quad \mbox{or} \quad \io \ln \frac \delta\varphi <0. 
\]
\end{lem}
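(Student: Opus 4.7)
My plan is to first dispose of the easy alternative and then to treat the main inequality as a weighted Poincaré-type estimate via a logarithmic substitution. If $\io \ln(\delta/\varphi) < 0$, the second alternative is verified and nothing further is needed. Otherwise, the natural substitution $w := \ln(\delta/\varphi)$ gives $|\nabla w|^2 = |\nabla\varphi|^2/\varphi^2$ and hence
\[
\io \frac{|\nabla\varphi|^2}{\varphi} = \io \varphi\,|\nabla w|^2.
\]
In these variables the measure hypothesis becomes $|\{w<0\}| > \eta$, the working assumption becomes $\bar w := |\Omega|^{-1}\io w \ge 0$, and the claim takes the weighted Poincaré-type form $\io \varphi|\nabla w|^2 \ge C(\io w)^2$.

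The engine of the argument is the Poincaré--Wirtinger inequality $\io (w-\bar w)^2 \le C_P \io|\nabla w|^2$. On the set $A := \{w<0\}$ of measure at least $\eta$, the relation $w < 0 \le \bar w$ provides the pointwise bound $(w-\bar w)^2 \ge \bar w^2$, and therefore
\[
\eta\bar w^2 \le \int_A (w-\bar w)^2 \le C_P \io|\nabla w|^2,
\]
yielding the unweighted prototype $(\io w)^2 = |\Omega|^2\bar w^2 \le \eta^{-1}C_P |\Omega|^2 \io|\nabla w|^2$.

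The main obstacle is to upgrade this to the weighted inequality with $\varphi|\nabla w|^2$ in place of $|\nabla w|^2$. My plan is to split $\Omega = A \cup A^c$ along the level set $\{\varphi = \delta\}$: on $A$ the weight is bounded below by $\delta$, while on $A^c$ the positive part $w_+ := (\ln(\delta/\varphi))_+$ is supported and vanishes on $A$ of measure at least $\eta$, enabling a vanishing-trace Poincaré inequality of the form $(\io w_+)^2 \le C_\eta|\Omega|\io |\nabla w_+|^2$. The sign condition $\io w \ge 0$ then ensures $\io w \le \io w_+$ and hence $(\io w)^2 \le (\io w_+)^2$, after which the argument closes provided one can dominate the relevant portion of $\int|\nabla\varphi|^2/\varphi^2$ by the weighted $\int|\nabla\varphi|^2/\varphi$ with a constant depending only on $\eta$. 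The logarithmic balance encoded in $\ln(\delta/\varphi)$ together with the sign condition must be used carefully here, and I expect this bridging step between the two weights to be the most delicate part of the proof.
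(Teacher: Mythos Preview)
The paper does not prove this lemma but simply cites \cite[Lemma 8.4]{lankeit_winkler}. More importantly, the statement as reproduced here carries a typo: the denominator on the left should be $\varphi^2$, not $\varphi$. The version with $\varphi$ is actually false. To see this, note that replacing $(\varphi,\delta)$ by $(\lambda\varphi,\lambda\delta)$ for $\lambda>0$ leaves the set $\{\varphi>\delta\}$ and the integral $\io\ln(\delta/\varphi)$ --- hence both hypotheses and the right-hand side --- unchanged, but multiplies $\io|\nabla\varphi|^2/\varphi$ by $\lambda$. Thus as soon as one has a single pair $(\varphi,\delta)$ with $|\{\varphi>\delta\}|>\eta$ and $\io\ln(\delta/\varphi)>0$ (which is easy to arrange), sending $\lambda\to 0$ violates the inequality for any fixed $C>0$. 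That $\varphi^2$ is the intended denominator is also clear from the application in Lemma~\ref{lem;lower;logn}: differentiating $-\io\ln\nep$ produces the dissipative term $\io|\nabla\nep|^2/\nep^2$, not $\io|\nabla\nep|^2/\nep$.

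With the corrected denominator your substitution gives $\io|\nabla\varphi|^2/\varphi^2=\io|\nabla w|^2$, and your ``unweighted prototype'' is then the complete proof: Poincar\'e--Wirtinger together with the pointwise bound $(w-\bar w)^2\ge\bar w^2$ on $\{w<0\}$ directly yields $\io|\nabla w|^2\ge \eta\,C_P^{-1}|\Omega|^{-2}\bigl(\io w\bigr)^2$. The ``bridging step between the two weights'' that you single out as the most delicate part is in fact impossible --- on $\{\varphi\le\delta\}$ one only has $1/\varphi^2\ge 1/(\delta\varphi)$, which goes the wrong way, and no constant depending solely on $\eta$ can repair this. Your instinct that something was resisting was correct; the obstruction lies in the misprinted statement, not in your method.
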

\begin{proof}
This lemma can be found in \cite[Lemma 8.4]{lankeit_winkler}. 
\end{proof}
Thanks to these lemmata, we can establish the following cornerstone for the proof of the positivity of the functions $n$ and $c$ obtained 
in Lemma \ref{lem;conv}. 
Because of the 
additional presence of the convection term $\uep \cdot \na \cep$,  
modification of the proof of \cite[Lemma 8.5]{lankeit_winkler} is necessary. 
\begin{lem}\label{lem;lower;logn}
  There exists $T>0$ such that for all $t\in (0,T)$, 
  \[
    \inf_{\ep\in (0,1)}\io \ln \nep \cd > -\infty.    
  \]
\end{lem}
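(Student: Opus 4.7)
The plan is to derive, for a suitable functional $y_\ep(t)$ based on $-\io \ln \nep$, an ordinary differential inequality of the form $y_\ep'(t)\le -a y_\ep^2(t) + b$ for $t$ in some interval $(0,T)$ on which $y_\ep(t)>0$, with constants $a,b>0$ independent of $\ep\in(0,1)$, and then to invoke Lemma \ref{lem;DI} to conclude that $y_\ep(t)\le \sqrt{b/a}\coth(\sqrt{ab}\,t)$ on $(0,T)$. This yields the desired uniform lower bound on $\io \ln \nep \cd$ on any compact subinterval of $(0,T)$. The overall line of reasoning follows \cite[Lemma 8.5]{lankeit_winkler}; what needs verification is that the fluid convection terms do not destroy the relevant identities.

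The first step is to compute $\frac{d}{dt}\io \ln \nep$ by testing \eqref{apcp1} formally by $\tfrac{1}{\nep}$. Integration by parts together with the homogeneous Neumann condition \eqref{apcp4} yields
\[
 \frac{d}{dt}\io \ln \nep \;=\; \io \frac{|\na \nep|^2}{\nep^2} \;-\; \chi\io \frac{\na \nep\cdot \na \cep}{\nep(1+\ep\nep)\cep} \;-\; \io \frac{\uep\cdot \na \nep}{\nep}.
\]
The crucial point is that the fluid convection term reduces to $-\io \uep\cdot \na\ln \nep = \io (\na\cdot \uep)\ln \nep = 0$, because $\na\cdot \uep=0$ in $\Om$ and $\uep=0$ on $\pa\Om$. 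Thus, up to this automatic cancellation, the identity is structurally the same as in the fluid-free case. A Young inequality applied to the chemotaxis cross term then gives
\[
 \frac{d}{dt}\io \ln \nep \;\ge\; \tfrac{1}{2}\io \frac{|\na \nep|^2}{\nep^2} \;-\; \tfrac{\chi^2}{2}\io \frac{|\na \cep|^2}{\cep^2}.
\]
A parallel computation on \eqref{apcp2}, in which the convection term again drops out by the same argument, produces $\io |\na\cep|^2/\cep^2 = \frac{d}{dt}\io \ln \cep + |\Om| - \io \nep/\cep \le \frac{d}{dt}\io \ln \cep + |\Om|$; combining this with the pointwise lower bound $\cep \ge (\inf c_0) e^{-t}$ from Lemma \ref{lem;local existence} and with the uniform upper bound on $\io \ln \cep$ coming via Jensen's inequality from the $L^1$ bound in Lemma \ref{lem;L1;nepcep}, the $\cep$-contribution is absorbed into a constant that remains finite on any bounded time interval.

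It then remains to bound $\io |\na \nep|^2/\nep^2$ from below by $y_\ep^2$ modulo a constant. For this we invoke Lemma \ref{lem;oneofkey} with a suitable power $\varphi=\nep^\alpha$ (so as to match the exponent produced by the diffusion term with the form $\io |\na\varphi|^2/\varphi$ required by the lemma) to obtain an inequality of the type $\io|\na \nep|^2/\nep^2 \ge C(\io \ln(\delta/\nep))^2 - C'$, valid once its hypothesis $|\{\nep\cd>\delta\}|>\eta$ is checked. This last uniform measure bound can be verified for some $\delta,\eta>0$ independent of $\ep$ by combining the mass identity $\io \nep\cd = \io n_0$ from Lemma \ref{lem;L1;nepcep} with the space-time $L^r$ bound for $\nep$ from Lemma \ref{lem;Lr(Lr;n)} via a Hölder argument, which prevents $\nep$ from being small on a set of nearly full measure. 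With $y_\ep(t):=\io \ln(\delta/\nep)\cd$, substitution produces the desired ODI $y_\ep'(t)\le -a y_\ep^2(t)+b$ whenever $y_\ep(t)>0$, and Lemma \ref{lem;DI} closes the argument.

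The main technical obstacle I anticipate is the alignment of exponents between $\io |\na \nep|^2/\nep^2$, which appears naturally in $\frac{d}{dt}\io \ln \nep$, and the quantity $\io |\na\varphi|^2/\varphi$ required by Lemma \ref{lem;oneofkey}; this is precisely the delicate point of \cite[Lemma 8.5]{lankeit_winkler} and must be handled with some care by working with a power $\nep^\alpha$ or by complementary manipulations. Everything else -- and in particular the presence of $\uep$ -- is benign because of the divergence-free cancellation above and the uniform estimates on $\uep$ and $\cep$ collected in Sections 3 and 5.
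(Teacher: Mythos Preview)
Your argument has a genuine gap in the verification of the level-set hypothesis $|\{\nep(\cdot,t)>\delta\}|>\eta$ of Lemma~\ref{lem;oneofkey}. You propose to obtain this from mass conservation together with the \emph{space--time} bound $\iio \nep^r\le C(T)$ of Lemma~\ref{lem;Lr(Lr;n)}. But a H\"older splitting like
\[
\io n_0=\io \nep(\cdot,t)\le \delta|\Omega|+\|\nep(\cdot,t)\|_{L^r(\Omega)}\,\big|\{\nep(\cdot,t)>\delta\}\big|^{1-\frac1r}
\]
requires a bound on $\|\nep(\cdot,t)\|_{L^r(\Omega)}$ that is uniform in $t$ and $\ep$; Lemma~\ref{lem;Lr(Lr;n)} only controls the time integral and gives no such pointwise-in-time information. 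Without this, you cannot guarantee the hypothesis of Lemma~\ref{lem;oneofkey} for \emph{all} $t\in(0,T)$, and the ODI comparison via Lemma~\ref{lem;DI} breaks down.

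The paper's proof fills exactly this hole, and this is where the fluid coupling is \emph{not} benign. It first runs a short-time semigroup bootstrap on the quantity
\[
M_\ep(t):=\sup_{\tau\in[0,t]}\big(\|\nep(\cdot,\tau)\|_{L^\infty(\Omega)}+\|\nabla\cep(\cdot,\tau)\|_{L^\theta(\Omega)}\big),\qquad \theta>N,
\]
using Lemma~\ref{lem;regurality;u} to feed the bound on $\nep$ back into $\|\uep\|_{L^\infty}$, and hence to control the transport terms in the Duhamel representations for $\nabla\cep$ and $\nep$. This yields an $\ep$-independent $T>0$ and constant $M$ with $\|\nep(\cdot,t)\|_{L^\infty}\le M$ and $\|\nabla\cep(\cdot,t)\|_{L^\theta}\le M$ on $(0,T)$. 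The $L^\infty$ bound on $\nep$, combined with mass conservation, immediately gives the uniform level-set estimate; the $L^\theta$ bound on $\nabla\cep$, together with the lower bound on $\cep$, controls $\io|\nabla\cep|^2/\cep^2$ pointwise in time. Only then does one follow \cite[Lemma~8.5]{lankeit_winkler} to obtain the ODI and apply Lemma~\ref{lem;DI}. Your alternative treatment of the $\cep$-contribution via $\frac{d}{dt}\io\ln\cep$ is in fact the device used later in Lemma~\ref{lem;positivity}, but that lemma \emph{presupposes} the present one; it does not supply the missing uniform-in-time ingredient.
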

%
\begin{proof}
We let $\theta >N$ and put 
\[
  M_\ep (t) := \sup_{\tau \in [0,t]} (\lp{\infty}{\nep(\cdot,\tau)}+ \lp{\theta}{\na \cep (\cdot,\tau)})
\]
for $t\in (0,\infty)$ and $\eps\in(0,1)$. Then $\norm[L^\infty(\Om\times(0,t))]{\nep}\le M_\ep(t)$ for any $t>0$ and $\eps\in(0,1)$, and thanks to 
Lemma \ref{lem;regurality;u},  
we thereby have established that 
\begin{equation}\label{estlinftyu}
   \lp{\infty}{\uep\cd} \le C_1 (1+ M_\ep (t))
\end{equation}
for all $t\in (0,\infty)$ and all $\ep\in (0,1)$. 
From the $L^p$-$L^q$ estimates for the Neumann heat semigroup 
we obtain $C_2\ge 1$ such that with some $C_3>0$,
\begin{align}\label{est:nacep}
  \lp{\theta}{\na \cep \cd} 
& \le 
  \lp{\theta}{\na e^{t(\Delta -1)} c_0 } 
    + \int_0^t\lp{\theta}{\na e^{(t-s)(\Delta -1 )} (\nep(\cdot,s) - (\uep \cdot \na\cep)(\cdot,s)) } \, ds \nn
\\
& \le C_2 \lp{\theta}{\na c_0} 
  + C_2\left( 
      |\Omega|^{\frac{1}{\theta}} M_\ep(t) 
      + C_1M_\ep(t) (1+M_\ep(t)) \right) \int_0^t (1+(t-s)^{-\frac{1}2})\, ds \nn
\\ 
& \le C_2\lp{\theta}{\na c_0} + C_3 \left( M_\ep(t)+ M^2_\ep(t) \right) \left( t+t^\frac{1}{2} \right)
\end{align}
holds for all $t\in(0,\infty)$. Apart from $\norm[\Lom\infty]{\nep(\cdot,t)}\le M_{\ep}(t)$, the definition of $M_\ep(t)$ also ensures that $\norm[\Lom\theta]{\na\cep(\cdot,t)}\le M_{\ep}(t)$ for all $t>0$, $\ep\in(0,1)$, which we also have just used in \eqref{est:nacep}, so that \eqref{ineq;lower;c} implies the existence of $C_4>0$ such that 
\[
 \norm[\Lom{\theta}]{\left(\f{\nep}{(1+\ep\nep)\cep} \na \cep\right)(\cdot,t)} \le C_4M^2_\ep(t)e^t \qquad \text{for } t>0,\; \ep\in(0,1),
\]
and \eqref{estlinftyu} guarantees 
\[
 \norm[\Lom{\theta}]{(\nep\uep)(\cdot,t)} \le 
 C_1 
 |\Omega|^{\frac{1}{\theta}}  
 (M_\ep(t)+M_\ep^2(t)) \qquad \text{for } t>0,\; \ep\in(0,1).
\]
Hence from a similar application of $L^p$-$L^q$ estimates in 
\[
  \lp{\infty}{\nep \cd} 
 \le 
  \lp{\infty}{e^{t\Delta} n_0} + 
  \int_0^t \left\|e^{(t-s)\Delta}\na \cdot \left(
     \frac{\nep}{(1+\ep \nep)\cep}\na \cep - \nep\uep 
  \right)(\cdot,s)\right\|_{L^\infty(\Omega)}
\]
for $t>0$ and $\ep\in(0,1)$, 
we can see that with some $C_5>0$ 
\begin{align}\label{est:nep}
  &\lp{\infty}{\nep \cd} \le\lp{\infty}{n_0}+C_5 (M_\ep(t)+M_\ep^2(t))e^t(t+t^{\f12-\f N{2\theta}}).
%
\end{align}
Adding \eqref{est:nacep} and \eqref{est:nep}, we conclude that with some $C_6>0$, the estimate 
\[
   M_\ep (t) \le 
  \lp{\infty}{n_0} + C_2 \lp{\theta}{\na c_0} + 
  C_6 e^t 
  \left( t + t^{\frac 12 - \frac   N{2\theta}} \right)
  (M_\ep(t) + M^{2}_\ep(t)) 
\]
holds true for all $t>0$ and $\ep\in(0,1)$. 
%
%
%
Now, letting 
\[ 
  T_\ep := \sup\{ t>0 \, | \, M_\ep (t) \le M := \lp{\infty}{n_0} + C_2 \lp{\theta}{\na c_0} + 1\}  
\]
and 
\[
  \tilde{T} := \min\big\{1, 
  4eC_6(M+M^{2})^{-\frac{2\theta}{\theta-N}},  
  T_\ep\big\} \le T_\ep, 
\]
we find that for all $t\in (0,\tilde{T}$) we have 
\begin{align*}
  C_6 e^t
  \left( t + t^{\frac 12 - \frac   N{2\theta}} \right) 
  (M_\ep(t) + M^{2}_\ep(t)) \le C_6 e 
  \left( 2\tilde{T}^{\frac 12 - \frac   N{2\theta}} \right)
  (M + M^{2}) 
  \le \frac 12.  
\end{align*}
Thus, we can see that 
\begin{align*}
  M_\ep (t) \le \lp{\infty}{n_0} + C_2 \lp{\theta}{\na c_0} + \frac{1}{2} < M
  \quad \mbox{for all} \ t\in (0,\tilde{T}), 
\end{align*}
which means that 
\begin{align*}
  T_\ep > \tilde{T} = 
  \min\{1, 
  4eC_6(M+M^{2})^{-\frac{2\theta}{\theta-N}}
  \} =: T. 
\end{align*} 
In conclusion, we infer that for all $t\in (0,T)$ and all $\ep \in (0,1)$,  
\begin{align}\label{shorttimebounds}
  \lp{\infty}{\nep \cd} \le M  \quad \mbox{and}\quad \lp{\theta}{\na \cep \cd} \le M
\end{align}
hold. Now we can follow the proof of \cite[Lemma 8.5]{lankeit_winkler}: The first part of \eqref{shorttimebounds} together with mass conservation of $\nep$ ensures applicability of Lemma \ref{lem;oneofkey} with some positive $\delta$ and $\eta$; and Lemma \ref{lem;oneofkey} and the second part of \eqref{shorttimebounds} hence show that with some $C_7, C_8>0$, 
\[
 \f{d}{dt} \io \ln \f{\delta}{\nep\cd} \le - C_7 \left(\io \ln \f{\delta}{\nep\cd}\right)^2 + C_8  
\]
for every $t\in(0,T)$ satisfying $\io \ln \f{\delta}{\nep\cd}>0$, so that Lemma \ref{lem;DI} proves the claim. 
%
%
\end{proof}
Now we can attain the positivity of $n$ and $c$ which is required in the definition of 
global weak solutions. 
\begin{lem}\label{lem;positivity}
The functions $n$ and $c$ from Lemma \ref{lem;conv} satisfy $n>0$ and $c>0$ a.e.\ in $\Omega\times (0,\infty),$ 
as well as $n^p c^q >0$ a.e.\ in $\pa \Omega \times (0,\infty)$.  
\end{lem}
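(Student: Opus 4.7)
The proof naturally splits into three claims: positivity of $c$, of $n$, and of the boundary trace of $n^p c^q$. The strategy in each case is to transfer a quantitative positivity property of the approximate solutions $(\nep,\cep)$ to the limit $(n,c)$ via the a.e.\ convergences in Lemma \ref{lem;conv}, possibly combined with a Fatou argument. The main difficulty is that lacking strong convergence of $\nep$ to $n$ (before Lemma \ref{lem;Lr(Lr;n)}-type bounds kick in), the lower bound needs to be communicated through an integrable quantity rather than pointwise.

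The positivity of $c$ is the easiest step: the pointwise estimate \eqref{ineq;lower;c} furnishes the $\eps$-uniform lower bound $\cep(x,t) \ge (\inf_\Omega c_0)\,e^{-t}$, and \eqref{conv;c} yields pointwise a.e.\ convergence $\cep\to c$, so $c(x,t)\ge (\inf_\Omega c_0)\,e^{-t}>0$ a.e.\ in $\Omega\times(0,\infty)$.

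The interior positivity of $n$ is obtained first on a short time interval $(0,T)$ via Lemma \ref{lem;lower;logn}. Splitting $\ln \nep = (\ln\nep)^+ - (\ln\nep)^-$ and using $(\ln\nep)^+\le \nep$ together with the mass identity from Lemma \ref{lem;L1;nepcep}, the lower bound on $\io \ln\nep$ translates into an $\eps$-uniform upper bound on $\io (\ln\nep)^-(\cdot,t)$ for a.e.\ $t\in(0,T)$. Fatou's lemma combined with the a.e.\ convergence \eqref{conv;ninL1} then produces $\io (\ln n)^-(\cdot,t) < \infty$ for a.e.\ $t\in(0,T)$, whence $n(\cdot,t)>0$ a.e.\ in $\Omega$ for such $t$. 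To extend this to all of $(0,\infty)$, one iterates: after time $T/2$, the approximate solutions $(\nep(\cdot,T/2),\cep(\cdot,T/2))$ still meet the qualitative requirements (nonnegative continuous first component of preserved total mass, positive lower bound on the second) needed to rerun Lemma \ref{lem;lower;logn} and the pointwise bound on $\cep$, so one repeats on successive intervals of (possibly shrinking but positive) length, exhausting $(0,\infty)$.

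The boundary claim is the main obstacle, because pointwise positivity in the interior does not by itself control traces. Here one exploits the regularity behind the remark following Definition \ref{def;weak(p,q)supersol}: the estimates of Lemma \ref{lem;estifromIneq} together with Lemma \ref{lem;esti;npcq} yield $\na(\nep^{p/2}\cep^{q/2}) = \cep^{q/2}\na\nep^{p/2} + \nep^{p/2}\na\cep^{q/2}$ bounded in $L^2_{loc}$, so $n^{p/2}c^{q/2}\in L^2_{loc}([0,\infty);W^{1,2}(\Omega))$ and admits a well-defined trace in $L^2_{loc}(\partial\Omega\times[0,\infty))$. The factor $\cep^{q/2}$ is pointwise bounded below by $(\inf c_0)^{q/2}e^{-qt/2}$ up to $\partial\Omega$ by continuity of the classical solution, which transfers to the trace of $c^{q/2}$ by lower semicontinuity in the weak-$W^{1,2}$ limit. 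For the factor $\nep^{p/2}$, one reruns the reasoning of Lemma \ref{lem;lower;logn}, replacing the interior integral $\io \ln \nep$ by a boundary version $\int_{\partial\Omega}\ln\nep$, using the same short-time $L^\infty$ bounds \eqref{shorttimebounds} and a boundary analogue of Lemma \ref{lem;oneofkey}; Fatou on the boundary (against the trace convergence) then rules out $n=0$ on a positive subset of $\partial\Omega\times(0,T)$, and iteration in time completes the argument.
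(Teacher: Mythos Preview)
Your treatment of $c>0$ is fine and matches the paper. The remaining two steps, however, contain real gaps, and both stem from missing the same key tool.

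\textbf{Extending $n>0$ beyond the short interval.} Your iteration is not justified. Lemma \ref{lem;lower;logn} produces a short time $T$ depending on $M=\lp{\infty}{n_0}+C_2\lp{\theta}{\na c_0}+1$. Restarting at $T/2$, the new ``initial data'' only satisfies $\lp{\infty}{\nep(\cdot,T/2)}+\lp{\theta}{\na\cep(\cdot,T/2)}\le M$, so the new threshold is $M'\ge M+C_2M+1>M$, and the new time step $T'$, which is a \emph{decreasing} function of this threshold, is strictly smaller than $T$. Nothing prevents the step lengths from decaying fast enough that the union of intervals fails to cover $(0,\infty)$. The paper sidesteps this entirely: it derives (following \cite[Lemma 8.6]{lankeit_winkler}) a differential inequality
\[
\f{d}{dt}\Big[-\io \ln \nep - \chi^2\io \ln \cep\Big] +\f12 \io |\na \ln \nep|^2 \le C,
\]
which upon integration from any $\tau_0\in(0,T)$ to arbitrary $t>\tau_0$ yields $\ep$-uniform bounds on both $-\io\ln\nep(\cdot,t)$ and $\int_{\tau_0}^t\io|\na\ln\nep|^2$ for \emph{all} later times in one stroke. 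Lemma \ref{lem;lower;logn} is used only once, to control the right-hand side at the single starting time $\tau_0$.

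\textbf{Boundary positivity.} Your argument invokes ``a boundary analogue of Lemma \ref{lem;oneofkey}'' and reruns Lemma \ref{lem;lower;logn} with $\int_{\pa\Omega}\ln\nep$ in place of $\io\ln\nep$. Neither result exists, and the underlying mechanism of Lemma \ref{lem;lower;logn}---a short-time $L^\infty$ bound fed into an ODI---has no natural boundary counterpart, since $\nep$ does not satisfy a PDE on $\pa\Omega$. The paper instead uses the gradient bound $\int_{\tau_0}^t\io|\na\ln\nep|^2\le C(t)$ already obtained from the differential inequality above: together with the $L^1$-type control of $\ln\nep$ it yields $\ln n\in L^2_{loc}((0,\infty);W^{1,2}(\Omega))$, and the trace theorem then gives $\ln n\in L^2_{loc}(\pa\Omega\times(0,\infty))$, so $\ln n$ is finite---hence $n>0$---a.e.\ on $\pa\Omega\times(0,\infty)$ as well. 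Thus one dissipation estimate handles both the time-global extension and the boundary trace simultaneously.
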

\begin{proof}
The positivity requirement on $c$ is obviously satisfied due to \eqref{ineq;lower;c} and \eqref{conv;c}. As in \cite[proof of Lemma 8.6]{lankeit_winkler}, we can derive a differential inequality of the form \[\f{d}{dt}\left[-\io \ln \nep - \chi^2\io \ln \cep\right] +\f12 \io |\na \ln \nep|^2 \le C.\] If we use \eqref{ineq;lower;c} and Lemma \ref{lem;lower;logn}, upon integration with respect to time the dissipative term
 yields an $\ep$-independent bound on $\int_{\tau_0}^t\io |\na \ln\nep|^2$ for arbitrary $t>\tau_0\in(0,T)$ with $T$ as in Lemma \ref{lem;lower;logn}, so that finally -- following the reasoning of \cite[proof of Lemma 8.6]{lankeit_winkler} -- we can conclude that $\ln n$ belongs to $L^2_{loc}((0,\infty);W^{1,2}(\Om))$ and to $L^2_{loc}(\partial \Om\times(0,\infty))$ and hence has to be finite almost everywhere.
\end{proof} 
Several terms 
in the relation obtained in Lemma \ref{lem;intnpc-r} 
contain coefficients that should become constants in the limit, but for positive $\ep$ seem much more involved, due to their dependence on $\nep$. In order to verify their convergence, 
we recall the following lemma. 
\begin{lem}\label{lem;conv-1}
Let $(f_\ep)_{\ep\in (0,1)} \subset C^0 ([0,\infty))\cap L^\infty((0,\infty))$  be such that 
\[
  \sup_{\ep\in (0,1)} \norm[L^\infty((0,\infty))]{f_\ep} < \infty 
\]
and that there exists $f \in C^0 ([0,\infty))$ such that 
\[
  f_\ep \to f \quad \mbox{in} \ L^\infty_{loc} ([0,\infty)) \ \mbox{as} \ \ep \searrow 0. 
\]
Then given $\chi>0$ satisfying \eqref{condi;chi;generalized} and taking $n,c,u$ and $(\ep_j)_{j\in \mathbb{N}}$ from Lemma \ref{lem;conv}, 
for all $p\in (0,1)$ and $q\in(0,1)$ such that $p<\frac{1}{\chi^2}$ and $q\in (q_-(p),q_+(p))$, with $q_\pm(p)$ as defined in \eqref{def;q+-}, and each $T>0$ 
we have 
\begin{align*}
  f_\ep(\nep)\cep^{\frac q2}\nabla \nep^{\frac p2} \rightharpoonup f(n) c^\frac{q}{2} \nabla n^{\frac{p}{2}} \quad \mbox{in} \ L^2(\Omega\times (0,T)) \ \mbox{as} \ \ep=\ep_j \searrow 0.  
\end{align*}
\end{lem}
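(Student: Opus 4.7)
My plan is to decompose the target product into a weakly convergent factor $\nabla\nep^{p/2}$ and a strongly convergent scalar factor $f_\ep(\nep)\cep^{q/2}$, then exploit the resulting weak--strong pairing in $L^2(\Omega\times(0,T);\R^N)$. As a first step, set $M:=\sup_{\ep\in(0,1)}\|f_\ep\|_{L^\infty((0,\infty))}<\infty$; Lemma \ref{lem;estifromIneq} immediately yields
\[
\iio \bigl|f_\ep(\nep)\cep^{q/2}\nabla\nep^{p/2}\bigr|^2 \le M^2 \iio \cep^q|\nabla\nep^{p/2}|^2 \le M^2 C(p,q,T),
\]
so along a further subsequence the sequence converges weakly in $L^2(\Omega\times(0,T);\R^N)$ to some limit $g$. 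By a routine subsequence argument, the claim reduces to showing $g=f(n)c^{q/2}\nabla n^{p/2}$ for every such weak limit.

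To identify $g$, I fix an arbitrary $\vec\varphi\in L^\infty(\Omega\times(0,T);\R^N)$ and rewrite
\[
\iio f_\ep(\nep)\cep^{q/2}\nabla\nep^{p/2}\cdot\vec\varphi = \iio \nabla\nep^{p/2}\cdot\bigl[f_\ep(\nep)\cep^{q/2}\vec\varphi\bigr].
\]
By \eqref{conv;nanepp/2}, the first factor converges weakly to $\nabla n^{p/2}$ in $L^2$, so it remains to prove strong $L^2$-convergence of the bracketed scalar-valued factor. Pointwise a.e.\ convergence $f_\ep(\nep)\cep^{q/2}\vec\varphi\to f(n)c^{q/2}\vec\varphi$ follows from \eqref{conv;ninL1}, \eqref{conv;c} and the observation that at any $(x,t)$ with $\nep(x,t)\to n(x,t)$, eventually $\nep(x,t)$ lies in a fixed compact subset of $[0,\infty)$ on which $f_\ep\to f$ uniformly. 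For uniform integrability of the squares, I pick $r\in(1,\tfrac{5}{3})$ (by Lemma \ref{lem;Lresti;cep;N=3}; in $N=2$ any $r>1$ works by Lemma \ref{lem;Lresti;cep;N=2}): since $q<1$ the exponent $2r/q$ strictly exceeds $2$, and
\[
\iio \bigl|f_\ep(\nep)\cep^{q/2}\vec\varphi\bigr|^{2r/q} \le M^{2r/q}\|\vec\varphi\|_\infty^{2r/q}\iio \cep^r
\]
is bounded independently of $\ep$. Vitali's theorem then delivers the desired strong $L^2$-convergence. Combining this with the weak $L^2$-convergence of $\nabla\nep^{p/2}$ and letting $\ep=\ep_j\searrow 0$ gives $\iio g\cdot\vec\varphi=\iio f(n)c^{q/2}\nabla n^{p/2}\cdot\vec\varphi$, and density of $L^\infty$ in $L^2$ on the finite-measure set $\Omega\times(0,T)$ identifies $g$ with $f(n)c^{q/2}\nabla n^{p/2}$.

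The main technical point I expect is the uniform-integrability step. It is precisely here that the space-time integrability of $\cep$ from Lemmata \ref{lem;Lresti;cep;N=2} and \ref{lem;Lresti;cep;N=3} is indispensable: a bare $L^1$ bound on $\cep$ would give only $L^{2/q}$-boundedness of $\cep^{q/2}$, insufficient to produce integrability in $L^{2+\delta}$ after squaring. The combination of $q<1$ with $\cep\in L^r_{t,x}$ for some $r>1$ yields the slight excess over the critical exponent $2$ required by Vitali. All remaining passages (weak compactness, weak--strong pairing, density, subsequence uniqueness) are standard.
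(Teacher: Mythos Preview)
Your proof is correct and follows essentially the same approach as the paper, which simply defers to \cite[Lemma 8.7]{lankeit_winkler} while noting that the argument rests on the bound \eqref{eq:4.6} for weak $L^2$-compactness and on \eqref{conv;ninL1}, \eqref{conv;nanepp/2}, \eqref{conv;c} for identification of the limit --- exactly the ingredients you employ. One cosmetic point: in the final step you invoke density of $L^\infty$ in $L^2$, which tacitly assumes $f(n)c^{q/2}\nabla n^{p/2}\in L^2$ a priori; it is cleaner to observe that $g-f(n)c^{q/2}\nabla n^{p/2}\in L^1(\Omega\times(0,T))$ (being a product of two $L^2$ functions) pairs to zero against every $L^\infty$ test, hence vanishes a.e., after which membership in $L^2$ follows from $g\in L^2$.
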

\begin{proof}
A proof, which for obtaining a convergent sequence essentially relies on \eqref{eq:4.6}, and, inter alia, on \eqref{conv;ninL1}, \eqref{conv;nanepp/2}, \eqref{conv;c} for identification of its limit, can be found in \cite[Lemma 8.7]{lankeit_winkler}. 
\end{proof}
Now all tools for the proof of Theorem \ref{mainthm} are 
provided. 
Finally, we give the following lemma which shows that 
the remaining requirements of the definition of global generalized solutions are satisfied. 
\begin{lem}\label{lem;existence}
Suppose that $\chi>0$ satisfies \eqref{condi;chi;generalized}, and let $p\in (0,1)$ and $q\in(0,1)$ be such that 
$p<\frac{1}{\chi^2}$ and $q\in (q_-(p),q_+(p))$, with $q_\pm(p)$ as defined in \eqref{def;q+-}, and assume they satisfy \eqref{condi;adi;pq} as well. 
Then the functions $n,c,u$ constructed in Lemma \ref{lem;conv} 
form a global weak $(p,q)$-supersolution of 
\eqref{cp1} and \eqref{cp4}--\eqref{cp5}  
in the framework of Definition \ref{def;weak(p,q)supersol}. 
\end{lem}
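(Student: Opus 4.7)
The plan is to derive \eqref{ineq;def;weak(p,q)} by passing to the limit $\ep=\ep_j\searrow 0$ in the identity of Lemma~\ref{lem;intnpc-r} applied to $(\nep,\cep,\uep)$; the remaining conditions of Definition~\ref{def;weak(p,q)supersol} are already in place from the preceding work. Indeed, Lemma~\ref{lem;conv} yields measurability and the distributional gradient requirements on $\nabla n^{p/2}$, $\nabla c^{q/2}$; Fatou applied to Lemma~\ref{lem;esti;npcq} delivers $n^pc^q\in L^{3/2+\eta}_{loc}$ for some $\eta>0$; weak lower semicontinuity applied to the $\ep$-uniform $L^2$- and $L^1$-bounds of Lemma~\ref{lem;estifromIneq} produces $c^{q/2}\nabla n^{p/2},\,n^{p/2}\nabla c^{q/2}\in L^2_{loc}$ and $n^{p+1}c^{q-1}\in L^1_{loc}$; the integrability of $u$ is supplied by \eqref{conv;u}; and the positivity of $n,c$ in the interior together with $n^pc^q>0$ on $\pa\Omega\times(0,\infty)$ is precisely Lemma~\ref{lem;positivity}. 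A small $\eta>0$ is admissible since the H\"older exponents in Definition~\ref{def;weak(p,q)supersol} satisfy $\tfrac{1}{3/2+\eta}+\tfrac{1+2\eta}{3+2\eta}=1$, which will be needed for the convection term below.

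For \eqref{ineq;def;weak(p,q)}, I fix a nonnegative $\varphi\in C_0^\infty(\ol{\Omega}\times[0,\infty))$ with $\pa_\nu\varphi=0$ on $\pa\Omega$ and pick $T>0$ so that $\mathrm{supp}\,\varphi\subset\ol{\Omega}\times[0,T)$, which makes the endpoint term $\io\nep^p(\cdot,T)\cep^q(\cdot,T)\varphi(\cdot,T)$ in Lemma~\ref{lem;intnpc-r} vanish. The time derivative $-\iio\nep^p\cep^q\varphi_t$, the Laplacian contribution $(1-\tfrac{p\chi}{q(1+\ep\nep)})\iio \nep^p\cep^q\Delta\varphi$ and the zero-order term $-q\iio\nep^p\cep^q\varphi$ then all pass to their $\ep=0$ counterparts via Vitali's theorem: their $\ep\nep$-dependent coefficients are uniformly bounded and converge a.e.\ to the formal $\ep=0$ values, while $\nep^p\cep^q\to n^pc^q$ a.e.\ and is equi-integrable in $L^1_{loc}$ by Lemma~\ref{lem;esti;npcq}. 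The convection integral $\iiio\nep^p\cep^q\uep\cdot\nabla\varphi$ is treated by pairing strong $L^{s'}_{loc}$-convergence of $\nep^p\cep^q$ with $s'$ close to $3/2+\eta$ (Vitali) against strong $L^s_{loc}$-convergence of $\uep$ with $s<3$ from \eqref{conv;u}, at H\"older-dual exponents. The cross term of Lemma~\ref{lem;intnpc-r} is cast in the form $f_\ep(\nep)\,(\nep^{p/2}\cep^{q/2})\cdot(\cep^{q/2}\nabla\nep^{p/2})\cdot\nabla\varphi$ with $f_\ep$ a uniformly bounded rational function of $\ep\nep$ converging a.e.\ to its formal limit; here $\nep^{p/2}\cep^{q/2}\to n^{p/2}c^{q/2}$ strongly in $L^2_{loc}$ via Vitali (using Lemma~\ref{lem;esti;npcq}), while Lemma~\ref{lem;conv-1} supplies weak $L^2_{loc}$-convergence of $\cep^{q/2}\nabla\nep^{p/2}$ to $c^{q/2}\nabla n^{p/2}$, and the two pair to yield the required limit. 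Finally, the source contribution $q\iiio\nep^{p+1}\cep^{q-1}\varphi$ is nonnegative, so Fatou produces $\liminf\ge$ its $\ep=0$ counterpart.

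The main technical step lies in the two nonnegative quadratic gradient integrals, whose coefficients also depend on $\ep\nep$. For the first, $\iiio g_\ep(\nep)\,\cep^q|\nabla\nep^{p/2}|^2\varphi$ with $g_\ep(s)=[4(1-p)q-4q^2-p(1-p)^2\chi^2(1+\ep s)^{-2}]/[pq(p\chi(1+\ep s)^{-1}+1-q)]$, Lemma~\ref{lem;1step} renders $g_\ep$ uniformly bounded below by a positive constant, while $g_\ep(\nep)\to g_0$ a.e.\ with $g_0$ the constant coefficient in the first line of \eqref{ineq;def;weak(p,q)}. I apply Lemma~\ref{lem;conv-1} to the bounded continuous profile $\sqrt{g_\ep}$ (which converges a.e.\ to $\sqrt{g_0}$), obtaining weak $L^2_{loc}$-convergence of $\sqrt{g_\ep(\nep)}\,\cep^{q/2}\nabla\nep^{p/2}$ to $\sqrt{g_0}\,c^{q/2}\nabla n^{p/2}$; multiplication by the bounded nonnegative factor $\sqrt{\varphi}$ preserves this, and weak lower semicontinuity of the $L^2$-norm squared then gives the sought $\liminf\ge$. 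The second squared term $\bigl|\nep^{p/2}\nabla\cep^{q/2}-\mathrm{coeff}_\ep\,\cep^{q/2}\nabla\nep^{p/2}\bigr|^2$ is treated analogously, after first identifying the weak $L^2_{loc}$-limit of $\nep^{p/2}\nabla\cep^{q/2}$ as $n^{p/2}\nabla c^{q/2}$ by combining strong $L^{s''}_{loc}$-convergence of $\nep^{p/2}$ with \eqref{conv;nacq2} and the $\ep$-uniform $L^2_{loc}$-bound of Lemma~\ref{lem;estifromIneq}. Since Lemma~\ref{lem;intnpc-r} is an equality whose left-hand side converges, the $\liminf$ of its right-hand side equals this same value and is therefore bounded below by the right-hand side of \eqref{ineq;def;weak(p,q)}, completing the verification. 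The main obstacle is orchestrating these weak/strong compactness pairings so that the $\ep\nep$-dependent rational coefficients produce their correct constant limits without disrupting the lower semicontinuity step.
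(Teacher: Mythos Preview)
Your proposal is correct and follows essentially the same approach as the paper's proof: both pass to the limit in the identity of Lemma~\ref{lem;intnpc-r}, treating the linear terms via convergence of $\nep^p\cep^q$ (you use Vitali/strong $L^{s'}$-convergence, the paper extracts a weak $L^{3/2+\eta}$-limit and identifies it), the cross term via Lemma~\ref{lem;conv-1} paired with strong $L^2$-convergence of $\cep^{q/2}$ resp.\ $\nep^{p/2}\cep^{q/2}$, the source term via Fatou, and the two nonnegative quadratic integrals via Lemma~\ref{lem;conv-1} combined with weak lower semicontinuity of the $L^2$-norm. Your treatment of the second quadratic term---identifying the weak $L^2$-limit of $\sqrt{h_\ep(\nep)}\,\nep^{p/2}\nabla\cep^{q/2}$ by combining the uniform $L^2$-bound with strong $L^{s''}$-convergence of $\sqrt{h_\ep(\nep)}\,\nep^{p/2}$ (available since $p<1$ and Lemma~\ref{lem;Lr(Lr;n)} gives $\nep\in L^r$ for some $r>1$) against \eqref{conv;nacq2}---is exactly the ``essentially similar idea'' the paper defers to \cite[Lemma~8.8]{lankeit_winkler}.
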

\begin{proof}
In light of Lemma \ref{lem;esti;npcq}, we can find a further subsequence $(\ep_j)_{j\in \mathbb{N}}$ of the sequence 
 obtained in Lemma \ref{lem;conv} and 
$w\in L^{\frac 32 + \eta}_{loc}(\ol{\Omega}\times [0,\infty))$ such that 
\(
  \nepp\cepq \rightharpoonup w \quad 
  \mbox{in} \ L^{\frac 32 + \eta} (\ol{\Omega}\times [0,\infty))  \)
with some $\eta >0$, as $\eps=\eps_j\searrow 0$ and then, relying on \eqref{conv;ninL1} and \eqref{conv;c}, make sure that $w$ coincides with $n^p c^q$, so that 
\begin{equation}\label{conv;npcq}
  \nepp\cepq \rightharpoonup n^pc^q \quad 
  \mbox{in} \ L^{\frac 32 + \eta} (\ol{\Omega}\times [0,\infty)).
\end{equation}
This, together with the fact that $\f{3+2\eta}{1+2\eta}<3$ and \eqref{conv;u}, and with a combination of pointwise convergence in \eqref{conv;ninL1} and \eqref{conv;c} with \eqref{eq:np+1cq-1} asserts the regularity requirements of \eqref{def;supersol;regularity1}. The remainder of the proof follows that of \cite[Lemma 8.8]{lankeit_winkler} closely and we hence restrict ourselves to a rough outline: In order to verify \eqref{ineq;def;weak(p,q)}, we pick a nonnegative function $\varphi\in C_0^\infty(\Ombar\times[0,\infty))$ with $\f{\partial \varphi}{\partial \nu}=0$ on $\partial \Om\times(0,\infty)$ and let $T>0$ be such that $\varphi\equiv 0$ on $\Om\times [T,\infty)$. With this function, Lemma \ref{lem;intnpc-r} turns into 
\begin{align*}  
- \iio &\nep^p \cepq \varphi_t  -  \io n_0^p c_0^q \varphi (\cdot,0) 
 = \iio \frac{4(1-p)q-4q^2 -p\frac{(1-p)^2\chi^2}{(1+\ep\nep)^2}}{pq(\frac{p\chi}{1+\ep\nep}+1-q)}
       \cepq|\nabla \nep^{\frac{p}{2}}|^2\varphi 
\\ 
   &\quad\, + \iio \frac{4}{q}\Big(\frac{p\chi}{1+\ep\nep}+ 1-q \Big) 
                \Big|\nep^{\frac{p}{2}}\nabla \cep^{\frac{q}{2}} 
                - \frac{\frac{(1-p)\chi}{1+\ep\nep}+2q}{2(\frac{p\chi}{1+\ep\nep}+1-q)}
                \cep^{\frac{q}{2}}\nabla \nep^{\frac{p}{2}} \Big|^2\varphi
\\
  &\quad\, + \iio \frac{2((1-p)\ep\nep-p)}{q(1+\ep\nep)^2}
     \nep^\frac{p}{2}\cepq\nabla \nep^{\frac{p}{2}}\cdot \nabla \varphi 
  + \iio \Big( 1-\frac{p\chi}{q(1+\ep\nep)}\Big)\nepp\cepq\Delta \varphi 
\\
  & \quad \, 
  - q \iio \nepp\cepq \varphi + q\iio \nep^{p+1}\cep^{q-1} \varphi 
  + \iio \nepp \cepq \uep\cdot \nabla \varphi.
\end{align*}
As $\eps=\eps_j\searrow 0$, the first term on the left side converges due to \eqref{conv;npcq}, as do the fifth and (after application of Lebesgue's dominated convergence theorem) the fourth integral on the right. Also convergence of the rightmost term is covered by \eqref{conv;npcq} in combination with \eqref{conv;u}. The third integral on the right converges due to Lemma \ref{lem;conv-1} and $\cep^{\f q2}\to c^{\f q2}$ in $L^2(\Om\times(0,T))$ due to \eqref{conv;c}. 
Fatou's lemma and \eqref{conv;ninL1} and \eqref{conv;c} ensure 
\[
  q\iio n^{p+1}c^{q-1} \varphi \le \liminf_{j\to\infty}  q\iio \nep^{p+1}\cep^{q-1} \varphi. 
\]
Lemma \ref{lem;conv-1} and weak lower-semicontinuity of the norm in $L^2(\Om\times(0,T))$ show that $c^{\f q2}\na n^{\f p2}\in L^2_{loc}(\Ombar\times[0,\infty);\R^N)$ and 
\begin{align*}
 \f{4(1-p)q-4q^2-p(1-p)^2\chi^2}{pq(p\chi+1-q)}&\iio c^q |\na n^{\f p2}|^2 \varphi\\ &\le \liminf_{\ep=\ep_j\to 0} \iio \frac{4(1-p)q-4q^2 -p\frac{(1-p)^2\chi^2}{(1+\ep\nep)^2}}{pq(\frac{p\chi}{1+\ep\nep}+1-q)}
       \cepq|\nabla \nep^{\frac{p}{2}}|^2\varphi.  
\end{align*}
The remaining term can be treated by an essentially similar idea (for details see \cite[Lemma 8.8]{lankeit_winkler}), so that both the second part of \eqref{def;supersol;regularity2} and the validity of \eqref{ineq;def;weak(p,q)} are ensured. 
\end{proof}
%
%
%
%
\begin{proof}[{\bf Proof of Theorem \ref{mainthm}}]
By now, Theorem \ref{mainthm} is nothing more than the combination of 
Lemmata \ref{lem;weaksol} and \ref{lem;existence} with \eqref{conv;ninmorethanL1} and \eqref{massconv;n}.
\end{proof}

\section{Acknowledgement}
T.B. and J.L. acknowledge support of the {\em Deutsche Forschungsgemeinschaft}  within the project {\em Analysis of chemotactic cross-diffusion in complex frameworks} (project no. 288366228).
M.M. is funded by JSPS Research 
Fellowships for Young Scientists (No. 17J00101). 
A major part of this work was written during joint stays at Universit\"at Paderborn and Tokyo University of Science under support from Tokyo University of Science.

%


 %
 {\footnotesize 
\def\cprime{$'$}

}
\end{document}